\newtheorem{thm}{Theorem}[section]
\newtheorem*{thm*}{Theorem}
\newtheorem{proposition}[thm]{Proposition}
\newtheorem*{proposition*}{Proposition}
\newtheorem{lemma}[thm]{Lemma} 
\newtheorem{definition}[thm] {Definition}
\numberwithin{equation}{section} 
\newtheorem{remark}[thm]{Remark}
\title{Emergence of metastability on the hyperbolic lattice: Effects of boundary conditions.}
\author{
Vanessa \textsc{Jacquier}\thanks{ University of Padova, via Luigi Luzzatti 4, 35121, Padova, Italy.\newline $_{}$\hfill    \href{mailto:vanessa.jacquier@unipd.it}
{\texttt{vanessa.jacquier@unipd.it}}},\hspace{2pt}

Wioletta M. \textsc{Ruszel}\thanks{Utrecht University, Budapestlaan 6, 3584 CD Utrecht, The Netherlands. \newline $_{}$\hfill     \href{mailto:w.m.ruszel@uu.nl}
{\texttt{w.m.ruszel@uu.nl}} } \hspace{5pt}
}
\begin{document}

\maketitle
\begin{abstract}
We investigate the Ising model on finite subgraphs of the hyperbolic lattice under minus boundary conditions and in the presence of a positive external field $h$.
Interpreting the boundary as frozen or cold wall conditions, we show that, for small values of $h$, the system exhibits metastable behaviour. Our result is very surprising, since non-amenable graphs, such as hyperbolic lattices, feature exponentially growing boundaries, which typically destabilize local energy minima.
In particular, we identify the unique metastable state and characterize the exit time from it. Furthermore, we establish asymptotic results for the distribution of the first hitting time and provide estimates for the spectral gap. Finally, we analyze the energy landscape and describe the nucleation mechanism for values of $h$ outside the metastable regime. 
 \\

\noindent   {\it MSC2020}  {\it Subject classifications:} 82B20, 82C20, 82C26, 60K35, 51M10, 60J10, 05B45 \\
    
\noindent	{\it Keywords:} Ising model, metastability, hyperbolic lattices, Glauber dynamics, low temperature stochastic dynamics, pathwise approach, large deviations.
\end{abstract}

\section{Introduction}

\subsubsection*{Metastability phenomena and mathematical approach}
A thermodynamical system influenced by noisy dynamics is said to exhibit metastable behavior when it stays for a long period near a local minimum of the energy, before making a sudden transition to a more stable state. On short time scales, the system appears to be in equilibrium, but, on long time scales, it shifts between different regions of its state space. This evolution is typically associated with first-order phase transitions and is triggered when a critical microscopic configuration emerges, either due to spontaneous fluctuations or external perturbations.

Metastability is observed in a variety of thermodynamical systems, such as magnets under an external magnetic field, supercooled liquids, and supersaturated gases. However, it is by no means limited to thermodynamics, similar phenomena arise across many disciplines including biology, chemistry, computer science, and economics.

To characterize metastable behavior, one needs to analyze the transition time to the stable phase, identify the critical configurations that trigger this transition, and describe the typical trajectories followed by the system during such events. Two main approaches have traditionally been used to study these questions: the pathwise approach the pathwise approach (e.g., \cite{cerf2013nucleation, dehghanpour1997metropolis, kotecky1993droplet, manzo2004essential, manzo1998relaxation, manzo2001dynamical, schonmann1994slow, schonmann1998wulff, nardi1996low, olivieri2005large, jovanovski2017metastability}) and the potential-theoretic approach (e.g., \cite{bovier2016metastability, bovier2010homogeneous, gaudilliere2020asymptotic, bashiri2017note}). More recently, additional techniques have been developed, as seen in \cite{beltran2010tunneling, beltran2015martingale, beltran2012tunneling, gaudillierelandim2014, bianchi2016metastable, bianchi2020soft}. 

This method focuses on the dynamical mechanisms driving the transition from a metastable to a stable phase, and has also been extended to study transitions between different stable states. It provides a detailed analysis of the three central aspects of metastability: the typical transition time, the structure of the critical configurations, and the ensemble of trajectories followed by the system during the transition. Important developments and applications of this approach can be found in \cite{cirillo2003metastability, cirillo2015metastability, manzo2004essential, nardi2016hitting}.

The second influential method is the potential-theoretic approach, introduced in \cite{bovier2002metastability} and elaborated in the monograph \cite{bovier2016metastability}. Rather than focusing directly on transition paths, this method employs tools from potential theory to study the dynamics of the system through the analysis of hitting times of metastable sets. Central to this framework is the concept of capacity, which can be estimated via variational principles and yields particularly sharp estimates of the expected transition time, often more precise than those obtained through the pathwise approach.

These two frameworks are not equivalent, as they rest on different conceptualizations of metastable states. The distinction becomes especially relevant in more delicate contexts, such as infinite-volume systems, irreversible dynamics, or dynamics with degeneracies, as discussed in \cite{cirillo2013relaxation, cirillo2015metastability, cirillo2017sum}. 

Applications of the pathwise approach to finite-volume systems at low temperature can be found in \cite{apollonio2022metastability, bet2021critical, bet2022metastability, bet2024metastability, cirillo2024homogeneous, cassandro1984metastable, cirillo1998metastability, neves1992behavior} for single-spin-flip Glauber dynamics, and in \cite{cirillo2003metastability, cirillo2008metastability, cirillo2008competitive, bet2020effect} for parallel dynamics, such as probabilistic cellular automata. A thorough review covering both serial and parallel dynamics is provided in \cite{cirillo2022metastability}. A thorough review covering both serial and parallel dynamics is provided in \cite{cirillo2022metastability}, while a survey of metastable behavior of the Ising model and its main results available in the literature can be found in \cite{jacquier2025exploring}.

The potential-theoretic framework has likewise been successfully applied to finite-volume models at low temperature in \cite{bet2020effect, bovier2006sharp, bovier2002metastability, den2011kawasaki, den2012metastability, nardi2012sharp}. Moreover, the more intricate regime of infinite volume, with either vanishing magnetic fields or low temperatures, has been the subject of several investigations within the context of Ising-like models, particularly under Glauber and Kawasaki dynamics; see for instance \cite{baldassarri2023metastability, bovier2010homogeneous, cerf2013nucleation, gaudilliere2009ideal, gaudilliere2020asymptotic, gaudilliere2010upper, hollander2000metastability, schonmann1998wulff}.

\subsubsection*{Ising models on hyperbolic lattices}

Hyperbolic lattices $\mathcal{L}_{p,q}$, where $\frac{1}{p}+ \frac{1}{q} < \frac{1}{2}$, play a central role in geometry, topology, and mathematical physics when exploring spaces beyond the Euclidean setting, most notably, those with constant negative curvature. Concrete applications have emerged in fields such as crystallography \cite{osti_1979736}, non-Euclidean analogues of the quantum spin Hall effect \cite{cryst}, and quantum electrodynamics \cite{electro}, with striking experimental implications \cite{chen23}. An example of such a lattice can be found in Figure \ref{fig:lattice}.

These lattices correspond to discrete symmetry groups acting on the hyperbolic plane, forming regular tessellations of two-dimensional spaces with constant Ricci curvature -1, such as the Poincar\'e disc. They also represent the simplest examples of regular lattices where each face has 
$p$ sides and each vertex has degree $q$ within a non-Euclidean geometric framework. The isoperimetric constant $i_e(\mathcal{L}_{p,q})$ for these lattices is positive (contrary to e.g. $\mathbb{Z}^d$) and was computed in \cite{haggstrom2002explicit}. This means that no finite subset has a {\it small boundary} relative to its size. In fact, the boundary and the size are of the same order. 

The ferromagnetic nearest-neighbors Ising model on these lattices also behaves much differently from its counterpart on $\mathbb{Z}^2$ at low temperature.  
The authors in \cite{RNO} studied Ising models with different boundary conditions on hyperbolic graphs. They find that key thermodynamic quantities such as magnetization and susceptibility diverge at a critical temperature with classical mean-field exponents, which is attributed to the effectively infinite dimensionality of hyperbolic lattices. They give field theoretic conjectures that even at criticality, correlations decay exponentially. 
Moreover, we employ a constructive method based on layer decomposition, introduced in \cite{RNO}, in which hyperbolic lattices are generated by iteratively adjoining layers of tiles around a fundamental region. The configuration of vertices in each layer follows a recursive relation, and this recursive structure will serve as a key ingredient in our analysis.

Moreover, in \cite{wu} it is shown that the Ising model undergoes multiple phase transitions, a phenomenon absent in the classical Euclidean setting. Specifically, for sufficiently large tree degree, the system evolves from a unique Gibbs state with zero magnetization at high temperatures, to a symmetry-broken regime characterized by non-zero magnetization and nontrivial two-point correlations, and finally to a low-temperature phase in which the Gibbs state splits symmetrically into plus and minus extremal states.
The presence of an intermediate phase, where the Gibbs state with free boundary conditions differs from the average of the extremal states, highlights novel effects induced by the hyperbolic geometry. Altogether, these results uncover a much richer phase structure in negatively curved lattices compared to flat ones, with methods that also extend to Potts models under analogous assumptions. The analysis of the phase transition was later refined in \cite{monroe}.

Regarding extremality, Aizenmann~\cite{aizenman1980} and Higuchi~\cite{higuchi1979} proved that the set of \emph{extremal} Gibbs states at low temperature consists of two measures concentrated on the all plus and all minus configuration in $\mathbb{Z}^2$. For hyperbolic lattices, it is now known that there are uncountably many extremal Gibbs states at low temperature~\cite{DCLN}, indexed by certain bi-infinite geodesics on the dual lattice (similar to Dobrushin interfaces). 

In the contest of metastability, the authors of \cite{martinelli2004glauber} show that, on the regular tree with “plus” boundary conditions, both the spectral gap and the log-Sobolev constant are uniformly positive and independent of the system size. It follows that the mixing time is at most of order $O(\log(n))$, where $n$ denotes the radius of the considered ball, thus excluding the presence of metastable behavior.

Similarly, in \cite{berger2005glauber}, the Glauber dynamics of the Ising model on trees and hyperbolic graphs is analyzed, showing that the relaxation time (the reciprocal of the spectral gap) grows at most polynomially in $n$. This result confirms that, even in negatively curved geometries, mixing occurs rapidly, without the exponential times characteristic of metastability.

In the present work, however, the situation changes substantially. The imposition of unfavorable boundary conditions (equivalent, in our Hamiltonian formulation \eqref{def:Hamiltonian}, to having minus spins fixed on the outer boundary while the external field is positive) induces a competition between the bulk and the boundary: the bulk tends to align in $+1$ due to the field, while the large hyperbolic boundary exerts a pull toward $-1$. This configuration can generate a metastable well: the homogeneous negative state becomes quasi-stable, with a prolonged residence time before a cluster of plus spins reaches the critical radius necessary for nucleation. In contrast, in \cite{martinelli2004glauber, berger2005glauber}, the boundary conditions considered are either free or fixed positive, and no scenario with unfavorable boundaries (all minuses) is treated.

\subsubsection*{Our results}
In this work, we study the Ising model on the hyperbolic lattices $\mathcal{L}_{p,q}$ at very low temperature, with isotropic interactions, fixed boundary conditions and under the influence of a weak external magnetic field. The system is assumed to evolve according to Glauber dynamics. 

We consider a finite lattice $\Lambda \subset \mathcal{L}_{p,q}$ consisting of the union of $N+1$ layers. Each layer consists of vertices which can be of two different types. Vertices in $I_{k;p,q}$ in the $k$-th layer are connected to vertices in the previous $k-1$-th layer whereas vertices in $E_{k;p,q}$ are not.

The Hamiltonian, defined as
\begin{align}
    H(\sigma):=-\frac{1}{2}\sum_{\substack{i,j \in \Lambda\\ d(i, j)=1}} \sigma (i) \sigma (j) +\frac{q-3}{2}\sum_{\substack{i \in I_{N;p,q}}} \sigma (i) +\frac{q-2}{2}\sum_{\substack{i \in E_{N;p,q}}} \sigma (i) -\frac{h}{2} \sum_{i \in \Lambda} \sigma (i),
\end{align}
is chosen to promote homogeneous nucleation from the homogeneous state $\textbf{-1}$ toward the stable state $\textbf{+1}$. 

In this formulation, the Hamiltonian function imposes fixed boundary conditions: this can be interpreted as if the spins outside the domain are frozen at a constant value, in our case $-1$.

We prove that the system exhibits metastability in a small but critical range of parameters, which is a surprising and novel result.
Roughly speaking, we will prove that:
\begin{thm*} (Identification of metastable states) For $h$ in some specific critical region, the metastable state is equal to the homogeneous state
$\textbf{-1}$ and the maximal stability level $\Gamma^{p,q}$ is an explicit constant depending on $p,q$. 
\end{thm*}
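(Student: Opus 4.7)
The plan is to follow the pathwise approach of Manzo--Nardi--Olivieri--Scoppola. Write $V_\sigma$ for the stability level of $\sigma$, i.e.\ the minimal communication height between $\sigma$ and the set of configurations with strictly lower energy. To prove the theorem I must (i) show that $\mathbf{-1}$ is a strict local minimum of $H$, (ii) exhibit an explicit reference path $\omega^\ast\colon\mathbf{-1}\to\mathbf{+1}$ whose energy barrier equals some $\Gamma^{p,q}$, (iii) show that every path from $\mathbf{-1}$ to a configuration of strictly lower energy must cross this same barrier, and (iv) verify that any other local minimum $\eta\notin\{\mathbf{-1},\mathbf{+1}\}$ satisfies $V_\eta<\Gamma^{p,q}$.

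For (i), I would compute, for an arbitrary site $i\in\Lambda$, the energy cost $\Delta H$ of flipping $\sigma(i)=-1$ to $+1$, paying attention to whether $i$ lies in the bulk, in $I_{N;p,q}$, or in $E_{N;p,q}$, since the three regions contribute different correction terms to the Hamiltonian. For $h$ in the small critical window this cost is strictly positive in every case, so $\mathbf{-1}$ is stable against single flips. For (ii), the natural candidate path is a layer-by-layer nucleation: starting from $\mathbf{-1}$ I grow a connected droplet of $+1$ spins inside one chosen face of the fundamental region, adding spins in the order that maximises the number of already-plus neighbours at each step, and then propagate outward using the recursive layer decomposition from \cite{RNO}. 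The maximal energy along this trajectory yields the explicit constant $\Gamma^{p,q}$; the combinatorics of which spins to add first will be dictated by the $(p,q)$ incidence structure.

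For (iii), I would argue via a local isoperimetric inequality on $\mathcal{L}_{p,q}$. Given any path $\omega\colon\mathbf{-1}\to\sigma$ with $H(\sigma)<H(\mathbf{-1})$, consider the first configuration along $\omega$ where a sufficiently large connected cluster of $+1$ spins appears. Using the positivity of the Cheeger constant $i_e(\mathcal{L}_{p,q})$ of \cite{haggstrom2002explicit}, every such cluster $C$ has $|\partial C|\geq i_e(\mathcal{L}_{p,q})\,|C|$, so its energy cost exceeds the volumetric gain $h|C|$ unless $|C|$ is of the order of the whole lattice; together with the extra pinning coming from the minus boundary this forces the path to pass through a configuration of energy at least $H(\mathbf{-1})+\Gamma^{p,q}$. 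Combining the matching bounds from (ii) and (iii) yields $V_{\mathbf{-1}}=\Gamma^{p,q}$.

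Step (iv), the uniqueness of $\mathbf{-1}$ as metastable state, will be the main obstacle. In the Euclidean case one classifies local minima by exhibiting, for each of them, a single downhill flip; here the layered structure of $\mathcal{L}_{p,q}$ and the distinction between $I_{k;p,q}$ and $E_{k;p,q}$ vertices produce many potential local minima (for instance, configurations where all spins in the innermost layers are $+1$ and the outer layers are $-1$). For each such $\eta$ I must locate an efficient escape, typically by flipping a vertex with the right coordination, and show that the resulting barrier is strictly below $\Gamma^{p,q}$. The non-amenability paradoxically helps, since the exponential boundary growth implies that any partially flipped configuration has many high-coordination sites available for a cheap flip, so the stability levels of competing local minima remain well below the one of the fully homogeneous state $\mathbf{-1}$.
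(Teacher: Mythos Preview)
Your overall framework is the correct one and matches the paper: pathwise approach via Manzo--Nardi--Olivieri--Scoppola, a layer-by-layer reference path for the upper bound, and a recurrence estimate showing every $\sigma\notin\{\mathbf{-1},\mathbf{+1}\}$ has small stability level. Steps (i), (ii) and the outline of (iv) are essentially what the paper does (though (iv) requires a substantial case analysis the paper carries out as a separate Proposition, distinguishing regular balls of sub- and supercritical radius from non-regular clusters).

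The genuine gap is in step (iii). The Cheeger bound $|\partial C|\ge i_e(\mathcal L_{p,q})\,|C|$ is far too blunt to yield a lower bound \emph{matching} $\Gamma^{p,q}$: it is a linear inequality with a non-sharp constant, and nothing in your argument pins down where along the path the maximum energy is attained, nor why that maximum equals precisely the value produced by the reference path. The sentence ``together with the extra pinning coming from the minus boundary this forces the path to pass through a configuration of energy at least $H(\mathbf{-1})+\Gamma^{p,q}$'' hides the entire difficulty.

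The paper's lower bound argument is different in kind. One foliates $\mathcal X$ by the level sets $\nu_n=\{\sigma: N^+(\sigma)=n\}$ of the number of plus spins. Any path $\mathbf{-1}\to\mathbf{+1}$ must visit $\nu_{A^*}$, where $A^*$ is the critical area. On $\nu_{A^*}$ the bulk term $-hN^+(\sigma)$ is constant, so minimising $H$ there is exactly the discrete isoperimetric problem on $\mathcal L_{p,q}$ at area $A^*$. The paper then invokes a \emph{sharp} isoperimetric result (proved in a companion paper) identifying the minimisers as the standard polyamonds---a ball of radius $r^*$ with a strip attached---whose energy is precisely $\Gamma^{p,q}$. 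This is what you need to replace the Cheeger argument with: a foliation by plus-count and an exact isoperimetric characterisation at the critical area, not an asymptotic inequality.
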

\begin{proposition*}(Recurrence property). Let $K^*$ be an explicit constant depending on $p,q,h$. Then, the dynamics sped up by a time factor of order $e^{\beta K^*}$ reaches with high probability the set $\{ \textbf{-1}, \textbf{+1} \}$.
\end{proposition*}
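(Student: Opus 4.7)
The plan is to apply the general recurrence machinery of the pathwise approach (Manzo--Nardi--Olivieri--Scoppola, Olivieri--Scoppola), according to which, if
\[
V^* := \max_{\sigma \notin \{ \textbf{-1}, \textbf{+1} \}} V_\sigma
\]
denotes the maximum stability level of configurations outside $\{ \textbf{-1}, \textbf{+1} \}$ (with $V_\sigma$ the minimum energy barrier that a path starting at $\sigma$ must overcome before reaching a state of strictly lower energy), then the first hitting time $\tau^\sigma_{\{ \textbf{-1}, \textbf{+1} \}}$ satisfies
\[
\lim_{\beta \to \infty} \mathbb{P}\!\left(\tau^\sigma_{\{ \textbf{-1}, \textbf{+1} \}} > e^{\beta(V^* + \delta)}\right) = 0
\]
for every $\delta > 0$, uniformly in the initial configuration $\sigma$. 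The proposition then reduces to establishing the equality $V^* = K^*$.

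To show $V^* \leq K^*$, I would exhibit, for each $\sigma \notin \{ \textbf{-1}, \textbf{+1} \}$, a path $\omega$ starting at $\sigma$ that reaches a configuration of strictly lower energy while its maximal elevation above $H(\sigma)$ is bounded by $K^*$. I would build such paths using the layered construction of $\Lambda$ inherited from \cite{RNO}: sweeping from the outermost layer inward, at each step one either finds a spin whose flip is immediately downhill (and performs it), or is forced to pay a local cost bounded by $K^*$ before the ensuing descent. A case analysis distinguishing vertices of $I_{k;p,q}$ from those of $E_{k;p,q}$, combined with the recursion between consecutive layers, is what keeps these local costs uniformly under control. The matching lower bound $V^* \geq K^*$ would then be obtained by identifying an explicit ``worst-case'' local minimum from the above analysis and checking, via an isoperimetric-type argument on the droplet of plus spins, that any path connecting it to a lower energy state must climb by at least $K^*$.

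The main obstacle, I expect, is the combinatorial control required by the hyperbolic geometry. In $\mathbb{Z}^2$ the downhill path construction rests on the simple fact that a droplet has only four sides; here, both the vertex degree $q$ and the face size $p$ enter nontrivially because the boundary of a finite cluster of plus spins can take many combinatorial shapes, and the boundary correction terms $\tfrac{q-3}{2}\sum_{i \in I_{N;p,q}} \sigma(i)$ and $\tfrac{q-2}{2}\sum_{i \in E_{N;p,q}} \sigma(i)$ in the Hamiltonian make the energy balance of single flips in the outermost layer subtle. A careful bookkeeping distinguishing interior from exterior vertices, together with the positive isoperimetric constant $i_e(\mathcal{L}_{p,q})$ from \cite{haggstrom2002explicit}, should be enough to keep the maximum climb bounded by the announced $K^*$.
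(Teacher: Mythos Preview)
Your framework is correct: one bounds $V_\sigma \le K^*$ uniformly for $\sigma \notin \{\textbf{-1},\textbf{+1}\}$ and then invokes the recurrence theorem of \cite{MNOS04}. But the proposed path construction has a genuine gap.

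The missing ingredient is the dichotomy at the critical radius $r^*$. Your plan of ``sweeping from the outermost layer inward'' amounts to always \emph{shrinking} plus-clusters, yet for a regular cluster of radius $n>r^*$ this goes the wrong way: one has $H(\sigma_{n-1})>H(\sigma_n)$ precisely when $n>r^*$, by the very definition of $r^*$. Shrinking a large supercritical ball $\sigma_n$ all the way to $\textbf{-1}$ forces the path through the critical droplet $\mathcal{B}(r^*)$, incurring an elevation of order $H(\mathcal{B}(r^*))-H(\sigma_n)$; for $n$ near $N$ this is comparable to $\Gamma^{p,q}-\Delta H(\textbf{+1})$, which far exceeds $K^*$. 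The paper instead treats the two regimes separately: subcritical regular clusters ($n\le r^*$) are shrunk by one layer, supercritical ones ($n>r^*$) are \emph{grown} by one layer, and in each case the single-layer traversal along the explicit ordering of $I_{n;p,q}$ and $E_{n;p,q}$ vertices costs exactly $K^*(h)$. Configurations containing a non-regular cluster are handled by a further case analysis (on the position of the innermost minus layer relative to $r^*$ and on strip lengths) yielding the smaller bound $2(h-q+4)<K^*(h)$.

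Two smaller points. First, the matching lower bound $V^*\ge K^*$ is not needed for the recurrence statement itself (only the upper bound on stability levels is), so the isoperimetric step you propose is superfluous here. Second, your concern that the boundary terms $\tfrac{q-3}{2}\sum_{I_{N;p,q}}\sigma$ and $\tfrac{q-2}{2}\sum_{E_{N;p,q}}\sigma$ make flips in $L_N$ delicate is unfounded: those terms are inserted precisely so that the single-flip energy increment is homogeneous across all of $\Lambda$ (as remarked right after the definition of $H$), so no special bookkeeping at the last layer is required, and the isoperimetric constant of \cite{haggstrom2002explicit} plays no role in the argument.
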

The following theorem provides information about the asymptotic behavior (for $\beta \to \infty$) of the transition time  for the system started in the metastable state. 
\begin{thm*}(Asymptotic behavior of $\tau_{\textbf{+1}}$ in probability) 
For the system started in the configuration $\textbf{-1}$, with high probability the hitting time behaves for $\beta \rightarrow \infty$ as
\[
\tau_{\textbf{+1}} \approx e^{\beta \Gamma^{p,q}}.
\]
\end{thm*}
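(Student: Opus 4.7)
The plan is to deduce the theorem from the two preceding results in the standard pathwise framework of Manzo, Nardi, Olivieri, and Scoppola, in which the asymptotic behaviour of the nucleation time is controlled by the pair (stability level, recurrence time). The previous theorem identifies $\mathbf{-1}$ as the unique metastable state and asserts that its maximal stability level equals $\Gamma^{p,q}$; the preceding proposition ensures that the process, sped up by $e^{\beta K^*}$ with $K^* < \Gamma^{p,q}$, hits $\{\mathbf{-1},\mathbf{+1}\}$ with probability tending to one. Combining these two inputs reduces the theorem to the two standard bounds
\[
\lim_{\beta\to\infty} \mathbb{P}_{\mathbf{-1}}\!\left(\tau_{\mathbf{+1}} > e^{\beta(\Gamma^{p,q}+\varepsilon)}\right)=0, \qquad \lim_{\beta\to\infty} \mathbb{P}_{\mathbf{-1}}\!\left(\tau_{\mathbf{+1}} < e^{\beta(\Gamma^{p,q}-\varepsilon)}\right)=0,
\]
for every $\varepsilon>0$.

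For the \emph{upper bound}, I would exhibit a reference path $\omega^{*}\colon \mathbf{-1}\to\mathbf{+1}$ whose communication height equals $\Gamma^{p,q}$; such a path is provided implicitly by the proof that the stability level of $\mathbf{-1}$ is exactly $\Gamma^{p,q}$. Along the tube around $\omega^{*}$, the Metropolis/Glauber transition probabilities give a lower bound of the form $e^{-\beta(\Gamma^{p,q}+\varepsilon/2)}$ for completing the traversal once the chain sits at $\mathbf{-1}$. Using the recurrence property to restart the clock after each failed attempt and a standard geometric-trials argument, a number $e^{\beta\varepsilon/2}$ of independent attempts succeeds with overwhelming probability within time $e^{\beta(\Gamma^{p,q}+\varepsilon)}$.

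For the \emph{lower bound}, I would exploit that $\Gamma^{p,q}$ is the maximal stability level: every path from $\mathbf{-1}$ to any $\eta \notin \{\mathbf{-1}\}$ of strictly smaller energy (in particular to $\mathbf{+1}$) must cross a saddle at height $H(\mathbf{-1})+\Gamma^{p,q}$. By the classical large-deviation estimate for exits from a cycle (see e.g.\ \cite{manzo2004essential, olivieri2005large}), the first time the chain leaves the cycle containing $\mathbf{-1}$ is bounded below in probability by $e^{\beta(\Gamma^{p,q}-\varepsilon)}$; since $\tau_{\mathbf{+1}}$ dominates this exit time, the desired bound follows.

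The main conceptual obstacle is \emph{not} in this step but already absorbed into the earlier theorem: verifying that, on the hyperbolic lattice with frozen minus boundary, no alternative mechanism supplies a path from $\mathbf{-1}$ to $\mathbf{+1}$ of communication height strictly below $\Gamma^{p,q}$. The competition between bulk alignment and exponentially growing boundary makes the combinatorics of droplet growth delicate, and the recursive layer structure from \cite{RNO} is needed to certify that the reference path through the optimal critical droplet is indeed an optimal gate. Once that input is available, the probabilistic argument above is routine, and uniformity in the finite volume $\Lambda$ can be obtained since all constants depend only on $p,q,h$ through explicit combinatorial quantities.
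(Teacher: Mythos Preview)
Your proposal is correct and follows essentially the same approach as the paper: once Theorem~\ref{Identification} establishes $\mathcal{X}^m=\{\mathbf{-1}\}$ with $\Gamma_{max}=\Gamma^{p,q}$, the asymptotic for $\tau_{\mathbf{+1}}$ follows directly from the general result \cite[Theorem~4.1]{MNOS04}. The paper simply cites that theorem, while you sketch its internal upper/lower bound mechanism (reference path plus geometric trials for the upper bound, exit-from-cycle large deviations for the lower bound); this is the same argument at a finer level of detail.
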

We also have a statement of the asymptotic distribution of $\tau_{\textbf{+1}}$, mixing time and spectral gap.

A schematic representation of the energy landscape for the critical region of $h$ is depicted in Figure \ref{fig:intro_schema}.
After creating a critical droplet which consist of a positive combinatorial ball of radius $r^*$ with a strip in the subsequent layer (whose length is a function of $h$), the nucleation process proceeds as follows. Swapping a minus into a plus close to the cluster (in the $(r^*+1)$-st layer) the energy will increase and decrease according to where the $+1$ was added. Completing the ball of radius $r^*+1$ we arrive at a local minimum in the energy landscape. Then, an additional $+1$ is added in the $(r^*+2)$-nd layer and the energy increases, and so on until the whole lattice is nucleated.

\begin{figure}
\centering
\includegraphics[scale=0.4]{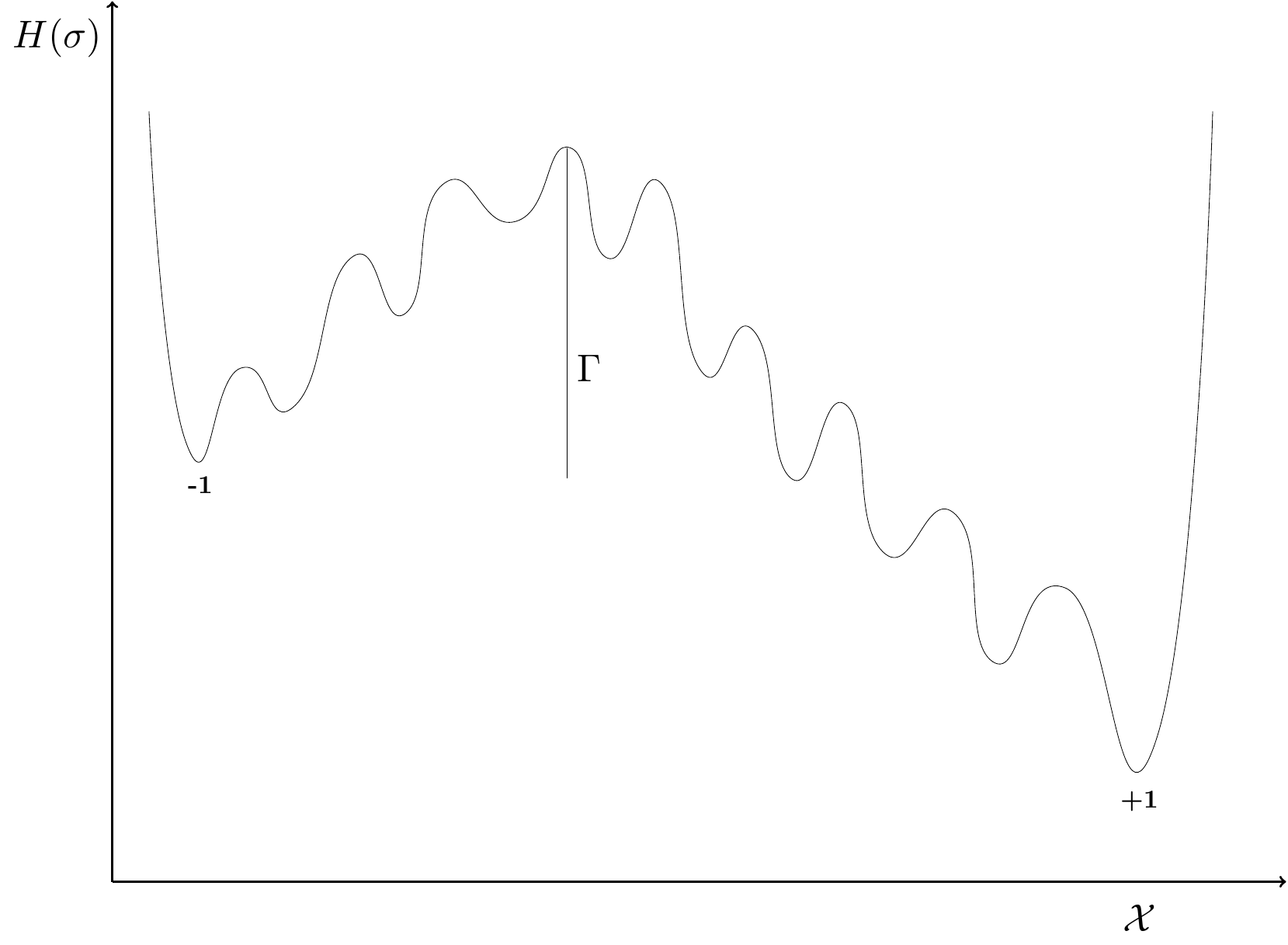}
\caption{Schematic depiction of the energy landscape in the critical region.} \label{fig:intro_schema}
\end{figure}

 In standard Euclidean lattices, the energetic cost of a droplet is dominated by two contributions: the interface energy, which grows linearly with the perimeter, and the bulk energy due to the external magnetic field, which grows with the area. Typically, the interface term dominates for small droplets, creating a barrier that stabilizes metastable states. On non-amenable graphs, such as regular trees or hyperbolic lattices, the situation is fundamentally different: the boundary of a finite region grows exponentially with its volume, so that the perimeter and the area are of the same order. As a result, the interface no longer provides a stabilizing barrier, making nucleation much easier and preventing the usual metastable traps, see \cite{martinelli2004glauber, berger2005glauber}. However, in our case, unfavorable boundary conditions (minus spins on the outer boundary with a positive external field) induce a competition between bulk and boundary, yielding a metastable well in which the homogeneous negative state persists until a cluster of plus spins reaches the critical nucleation.

These results represent a significant conceptual novelty, as it shows that the common conclusion \emph{there is no metastability on trees} is not universal, but rather depends sensitively on the choice of boundary conditions. In hyperbolic geometries, metastability is not eliminated, but its existence is shifted: the critical factor lies in the competition between the boundary influence and the external field, rather than solely in the internal energy balance.

Our findings highlight a more subtle aspect of dynamics on exponentially growing spaces. The boundary does not act merely as a perturbation favoring alignment, but can function as an active structural element that generates competition. Depending on the system parameters, this competition may either produce or suppress metastable behavior.

From a broader perspective, this observation has implications beyond the specific lattice model considered here. It suggests new mechanisms for controlling relaxation times in complex or disordered materials and provides a conceptual framework for understanding slow dynamics in hierarchical or hyperbolic networks, where the interplay between local influences and global fields can give rise to long-lived metastable states. 

In general, our work demonstrates that the relationship between metastability and hyperbolic geometry is more nuanced than previously thought and emphasizes the structural role of boundary conditions in determining the behavior of the system.

From a physical perspective, such conditions can model the interaction with materials or surfaces that maintain a stable spin polarization. A typical example arises in crystal growth or wetting phenomena, where a fixed boundary represents a substrate that either favors or inhibits the adhesion of a new phase.

When the boundaries of the system are fixed in a phase opposite to the one intended to nucleate or propagate in the bulk, they are referred to as \emph{cold walls}. This term metaphorically describes an environment that actively resists phase transitions, thereby stabilizing the initial configuration (in this case $\textbf{-1}$). The presence of such walls has a significant impact on the dynamics: it makes the potential well associated with the metastable state particularly deep. As a result, the average time required for the system to escape from this state grows exponentially, in accordance with classical metastability theory.

Moreover, we note that, on the hyperbolic graph, it is not possible to impose periodic boundary conditions without altering the degree of the vertices or modifying the structure of the tiles at the inner boundary of the lattice.

On the other hand, the use of free boundary conditions would induce preferential nucleation at the boundary. Indeed, the energy cost associated with flipping a spin at a boundary vertex is significantly lower than that of a spin in the bulk. This is because a boundary vertex contributes a much smaller increase in perimeter, being connected only to the interior of the lattice and not interacting with external spins, as dictated by free boundary conditions.

Recalling that the hyperbolic lattice is defined for $1/p+1/q<1/2$, we have excluded from our study the case $p=3,q \geq 7$. We believe this choice is purely technical, and that results similar to those analyzed here would hold. More precisely, a unique representation of the hyperbolic lattice is provided in \cite{RNO} for all $p,q \geq 4$, while for $p=3$ the representation is slightly different. Indeed, in our case, each layer is partitioned into two sets as well: vertices not connected to the previous layer, and vertices connected to a single vertex in the previous layer. For $p=3$, each layer can be partitioned into two sets of vertices: those connected to only one vertex in the previous layer, and those connected to two vertices in the previous layer. This slightly alters the geometry and the associated parameters, but we expect that the results presented for our case can be extended in a similar manner to the case $p=3, q \geq 7$.

\subsubsection*{Organization of the paper}
The paper is organized as follows. In Section~\ref{sec:model}, we introduce the model and we analyze the energy landscape. We present the main results in Section~\ref{main}. Section~\ref{geom1} is dedicated to the description of the model geometry. In particular, we create a bijection between the clusters in the Ising configuration and the polyamonds. 
In Section \ref{proofs}, we prove the main results in the metastable parameter region. More precisely, Section~\ref{sec:recurrence} is devoted to proving theorems regarding the recurrence of the system to either the stable or metastable state. Section~\ref{bound} focuses on identifying the reference path and estimating the energy barrier.
The remaining theorems concerning the metastable behavior of the system are proven in Section~\ref{sec:other_theorems}.
In Section \ref{sec:other_regions}, we describe the evolution of the system in the remaining parameter regimes of $h$. Finally in the appendix, Section \ref{sec:appendix}, we show in explicit examples how two different critical droplets arise for different magnetic fields $h$ and the same $p,q$.

\section{Model description and analysis of the energy landscape}\label{sec:model}
\subsection{Ising model on $\mathcal{L}_{p,q}$}
Consider the hyperbolic plane $\mathbb{H}^2$ with origin $\textbf{o}$ (i.e.~the unique 2-dimensional Riemannian manifold of constant negative sectional curvature, which we fix here to be $-1$).
Denote by $\mathcal{L}_{p,q}$, with $1/p+1/q < 1/2$, the tessellation of $\mathbb{H}^2$ made of regular $p$-gons whose sides have unitary hyperbolic length and in which each vertex has degree $q$, see Figure \ref{fig:lattice} for instance. 
\begin{figure}[!hbtp]
    \centering
    \begin{minipage}{0.45\textwidth}
        \centering
        \includegraphics[width=0.74\textwidth]{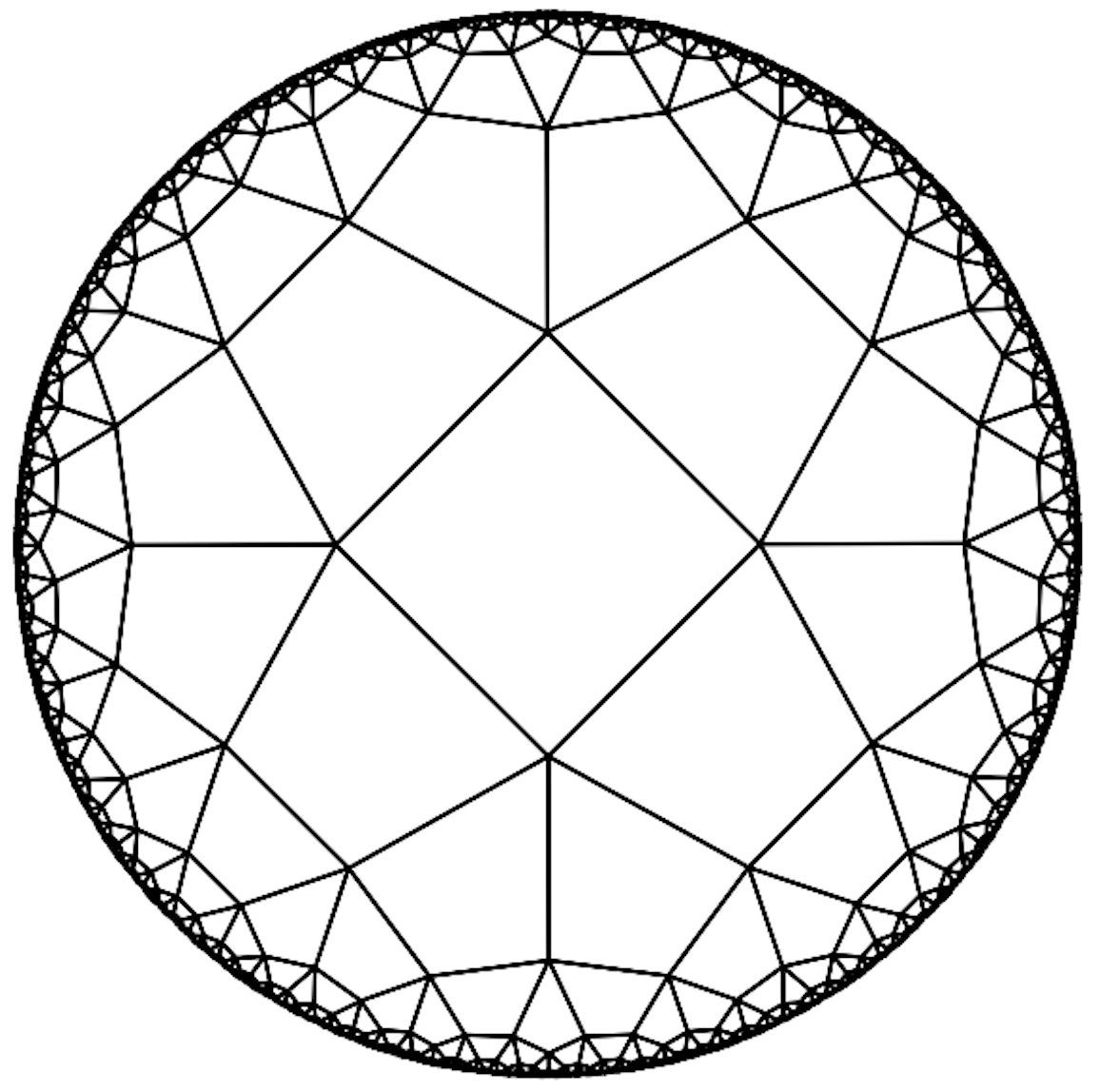} 
        \caption{Embedding of $\mathcal{L}_{4,5}$ in the hyperbolic disc}\label{fig:lattice}
    \end{minipage}\hfill
    \begin{minipage}{0.45\textwidth}
        \centering
        \includegraphics[width=0.74\textwidth]{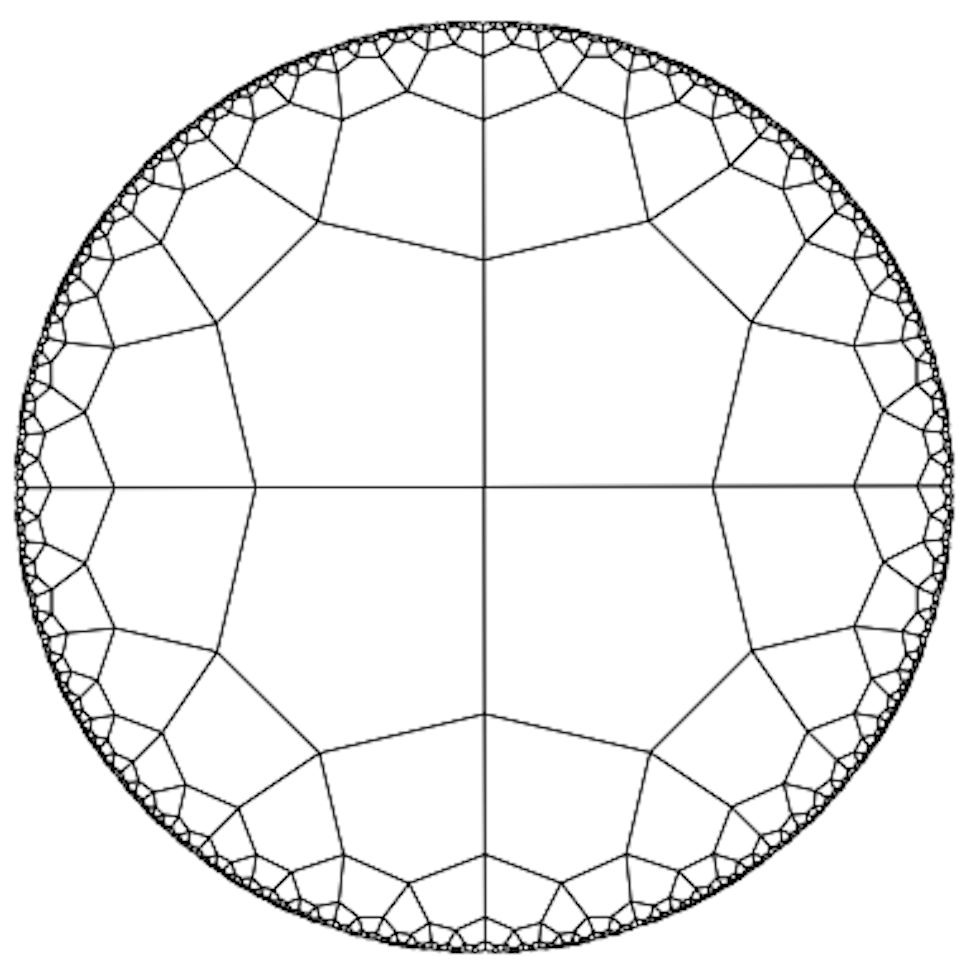} 
        \caption{$\mathcal{L}_{5,4}$ as the dual lattice of $\mathcal{L}_{4,5}$.} \label{fig:dual}
    \end{minipage}
\end{figure}
Assume that $p, q \geq 4$ and write $\mathcal{L}_{p,q}=(\mathcal{V}_{p,q}, \mathcal{E}_{p,q})$ with vertex set  $\mathcal{V}_{p,q}$ and edge set $\mathcal{E}_{p,q}$. 
We note that the dual lattice of $\mathcal{L}_{p,q}$ contains the origin and is equal to the graph $\mathcal{L}_{q,p}$, see Figure \ref{fig:dual}.

Let $k\in \mathbb{N} \cup \{0\}$ and let $v$ be a vertex in the dual lattice $\mathcal{L}_{q,p}$.  We define the $k$-th \emph{layer} of $v$, denoted by $L_k(v)$, as the set of vertices in $\mathcal{V}_{p,q}$ constructed in the following way. The zero-th layer $L_0(v)$ is the set of $p$ vertices in the unique face of $\mathcal{L}_{p,q}$ containing $v$. 
The first layer $L_1(v)$ is the set of vertices not in $L_0(v)$ which are in all the faces adjacent to the face containing $v$ (including those sharing just a vertex). In other words, $L_1(v)$ contains all vertices that are connected with the vertices in $L_0(v)$. 
For $k \geq 2$, we define $L_k(v)$ iteratively as the set of vertices which are in all the faces adjacent to the faces containing the vertices in $L_{k-1}(v)$.
Clearly, $\mathcal{V}_{p,q}=\cup_{k\geq 0} L_k(v)$ and $L_j(v) \cap L_k(v) = \emptyset$ if $j\neq k$. We denote by $B_{n;p,q}(v)=\bigcup_{i=0}^{n-1}L_i(v)$ the ball centered at $v$ of the union of the first $n$ layers. If $v$ is the origin, then we will drop the dependence on $v$.

Furthermore, set $v\equiv\textbf{o}$ and for each $n \geq 1$ and $p \geq 4$, we will use the construction in \cite{RNO} and define the following partition of $\mathcal{V}_{p,q}$:
\begin{itemize}
    \item $I_{n;p,q} = \{v \in L_n \, | \, \, \exists \, w \in  L_{n-1}: (v,w)\in \mathcal{E}_{p,q} \}$;
    \item $E_{n;p,q} = L_n\setminus I_{n;p,q}$,
\end{itemize}
so that $|L_n| = |I_{n;p,q}|+|E_{n;p,q}|$. 
We have the following recursion relation for $I_{n;p,q}, E_{n;p,q}$, see also \cite[Equations (2.14) and (2.16)]{RNO}, for $p\geq 4$ and $n\geq 0$
\begin{align}\label{eq:recur}
{I_{n+1;p,q} \choose E_{n+1;p,q}} = T {I_{n;p,q} \choose E_{n;p,q}},
\end{align}
where
\begin{align}\label{eq:T1}
T = \begin{pmatrix}
q-3 & q-2 \\
8 -3p-3q+pq & 5-2p-3q+pq
\end{pmatrix}
\end{align}
and $(I_{0;p,q},E_{0;p,q})=(0,p)$.
The eigenvalues of $T$ are given by
\begin{align}\label{eq:eigen}
\lambda_{\pm} &= \frac{1}{2} \left (2+p(q-2)-2q \pm \sqrt{(p-2)(q-2)(q(p-2)-2p)}\right ) \notag \\
&= \left ( \frac{1}{2}(p-2)(q-2)-1\right ) \left ( 1 \pm \sqrt{1-\frac{4}{((p-2)(q-2)-2)^2} }\right ).
\end{align}

Therefore,
\begin{align}
&|I_{n;p,q} |= 2c_{p,q} \sqrt{q-2}( \lambda^{n}_+ - \lambda^{n}_-), \\
&|L_n|= (1,1)T^n {0\choose p} =c_{p,q} ( a_-\lambda^n_{-} + a_+ \lambda^n_+),
\end{align}
where
\begin{align}\label{constant_cpq}
    c_{p,q}=\frac{p}{2 \sqrt{(p-2)(pq-2p-2q)}},
\end{align}
and
\begin{align}\label{eq:apm}
a_{\pm} &=(\sqrt{(p-2)(p(q-2)-2q)} \mp 2 \sqrt{q-2} \pm p\sqrt{q-2}) \notag \\
& =(p-2)\sqrt{q-2} \left (\pm1+\sqrt{1-\frac{4}{(p-2)(q-2)}}  \right).
\end{align}

Let $N\in \mathbb{N}$ and let $\Lambda \subset \mathcal{L}_{p,q}$ be defined as $\Lambda := B_{N+1;p,q}(\textbf{o})$. Denote by $\sigma:\Lambda \longrightarrow \{-1,+1\}^{\Lambda}$ an \textit{Ising spin configuration} on $\Lambda$ and let $\mathcal{X}:=\{-1,+1\}^{\Lambda}$ be the {\it configuration space}.

\begin{definition}
Let $p,q\geq 4$. The Hamiltonian function $H: \mathcal{X} \longrightarrow \mathbb{R}$ for the Ising model on $\Lambda$ is defined as
\begin{equation}\label{def:Hamiltonian}
H(\sigma):=-\frac{1}{2}\sum_{\substack{i,j \in \Lambda\\ d(i, j)=1}} \sigma (i) \sigma (j) -\frac{h}{2} \sum_{i \in \Lambda} \sigma (i) +\frac{q-3}{2}\sum_{\substack{i \in I_{N;p,q}}} \sigma (i) +\frac{q-2}{2}\sum_{\substack{i \in E_{N;p,q}}} \sigma (i) ,
\end{equation} 
where $h\in \mathbb{R}$ is a positive parameter and $d(\cdot, \cdot)$ is the graph distance on $\mathcal{L}_{p,q}$. 
\end{definition}

The first two sums are the same as in the definition of a {\it classical} Hamiltonian with $J=1$. The remaining two sums are over vertices in the boundary layer (which has the same cardinality order as the bulk) and are chosen such that the Hamiltonian is homogeneous, i.e. $H(\sigma) = H(\sigma')$ for $\sigma, \sigma'$ which differ by swapping a minus spin uniformly in $\Lambda$.

Moreover, we define the Gibbs measure for our model as follows.

\begin{definition}\label{def:gibbs}
  The Gibbs measure of the Ising model with Hamiltonian $H$ on $\Lambda$ with free boundary conditions is defined by 
\begin{equation}\label{eq:gibbs}
    \mu(\sigma)=\frac{e^{-\beta H(\sigma)}}{Z_{\beta}},
\end{equation}
where $\beta:=\frac{1}{T} >0$ is the inverse temperature and $Z_{\beta} = \sum_{\eta \in \mathcal{X}} e^{-\beta H(\eta)}$ is the normalization constant. 
\end{definition}

\subsection{Glauber dynamics}
We study the evolution of our model under Glauber dynamics. In particular, we consider a Markov chain  $(X_t)_{t \in \mathbb{N}}$ on $\mathcal{X}$ 
defined via the so called \emph{Metropolis Algorithm}.
The transition probabilities of this dynamics are given by
\begin{equation}\label{transitionprob}
    p(\sigma, \eta)=f(\sigma,\eta) e^{-\beta[H(\eta) -H(\sigma)]_+}, \qquad \text{for all } \sigma \neq \eta,
\end{equation}
where $[\cdot]_+$ denotes the positive part and $f(\sigma,\eta)$ is a connectivity matrix
independent of $\beta$, defined, for all $\sigma \neq \eta$, as
\begin{equation}
    f(\sigma,\eta)= \left\{
    \begin{array}{ll}
    \frac{1}{|\Lambda|} & \;\;\textrm{ if } \exists \; i\in \Lambda: \sigma^{(i)}=\eta\\
    0& \;\;\textrm{ otherwise }
    \end{array}
    \right.
\end{equation}
where
\begin{equation}
    \sigma^{(i)}(j)= \begin{cases}
    \begin{array}{ll}
        \sigma(j) & \;\;\textrm{ if } j \neq i\\
        -\sigma(j) & \;\;\textrm{ if } j = i.
        \end{array}
    \end{cases}
\end{equation}
We will say that two configurations $\sigma, \eta$ are \emph{communicating} when they differ by exactly one spin, i.e. $\eta= \sigma^{(i)}$ for some $i \in \Lambda$.

In the next lemma, we prove that  $(X_t)_{t \in \mathbb{N}}$ is reversible with respect to the Gibbs measure defined in Definition \ref{def:gibbs}.

\begin{lemma}\label{lem:reversibility}
$(X_t)_{t \in \mathbb{N}}$ is an ergodic and aperiodic Markov chain  on $\mathcal{X}$ satisfying the detailed balance condition
\begin{equation}\label{reversibility}
    \mu(\sigma)p(\sigma, \eta)=\mu(\eta)p(\eta, \sigma),
\end{equation}
where $\mu (\cdot)$ is the Gibbs measure \eqref{eq:gibbs}.
\end{lemma}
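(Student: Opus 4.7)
My plan is to verify the three claims—irreducibility, aperiodicity, and detailed balance—separately. Since the state space $\mathcal{X}$ is finite of size $2^{|\Lambda|}$, ergodicity will follow automatically once irreducibility and aperiodicity are established, so no measure-theoretic subtleties arise.

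For irreducibility, I would observe that the connectivity matrix $f$ allows any single-spin flip with positive probability $1/|\Lambda|$, and each such move is accepted with strictly positive probability $e^{-\beta[\Delta H]_+}>0$ for any finite $\Delta H$. Since any two configurations differ in at most $|\Lambda|$ sites, any $\eta$ is reachable from any $\sigma$ via a finite sequence of single-spin flips, each occurring with positive probability. For aperiodicity, it suffices to exhibit one configuration $\sigma$ with $p(\sigma,\sigma)>0$: any $\sigma$ admitting at least one energy-raising flip (e.g.\ $\mathbf{-1}$ when $h>0$, since flipping a boundary spin raises $H$) contributes a strictly positive rejection probability to the self-loop, which combined with irreducibility forces the period to be $1$.

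The main content is detailed balance. When $\sigma$ and $\eta$ do not differ by a single spin, $f(\sigma,\eta)=f(\eta,\sigma)=0$ and \eqref{reversibility} holds trivially with both sides equal to zero. Otherwise $f(\sigma,\eta)=f(\eta,\sigma)=1/|\Lambda|$, and combining \eqref{eq:gibbs} with \eqref{transitionprob} reduces the identity \eqref{reversibility} (after cancelling the common factors $Z_\beta^{-1}$ and $1/|\Lambda|$) to
\begin{equation*}
e^{-\beta H(\sigma)-\beta[H(\eta)-H(\sigma)]_+}=e^{-\beta H(\eta)-\beta[H(\sigma)-H(\eta)]_+}.
\end{equation*}
Both sides equal $e^{-\beta\max\{H(\sigma),H(\eta)\}}$ by the elementary identity $x+[y-x]_+=\max\{x,y\}$, which closes the argument.

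There is essentially no obstacle here: this is the standard verification for Metropolis dynamics, and neither the boundary correction terms in the Hamiltonian \eqref{def:Hamiltonian} nor the hyperbolic geometry of $\mathcal{L}_{p,q}$ enter the computation. These structural features will play their role only later, in the finer analysis of the energy landscape and the identification of the metastable configurations.
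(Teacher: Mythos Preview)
Your proof is correct and, in fact, cleaner and more complete than the paper's. The paper's proof addresses only the detailed balance condition (ergodicity and aperiodicity are asserted in the lemma but not argued in the proof), and it does so by computing the energy increment $H(\sigma^{(i)})-H(\sigma)$ explicitly, splitting into the cases $i\in L_k$ for $k\leq N-1$ and $i\in L_N$ (with a further subdivision into $I_{N;p,q}$ and $E_{N;p,q}$), and then checking by hand that the two sides of \eqref{reversibility} agree.

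Your approach avoids this case analysis entirely by invoking the identity $x+[y-x]_+=\max\{x,y\}$, which shows that detailed balance holds for \emph{any} Metropolis chain regardless of the particular form of $H$; the boundary correction terms in \eqref{def:Hamiltonian} never enter. This is both shorter and more robust. What the paper's computation buys in exchange is an explicit record of the single-flip energy increments at interior and boundary sites, formulas which reappear later in the analysis of stability levels and the reference path in Section~\ref{sec:recurrence}; but for the purposes of Lemma~\ref{lem:reversibility} itself they are not needed. Your explicit verification of irreducibility and aperiodicity (via the positive self-loop at $\mathbf{-1}$) fills a gap the paper leaves implicit.
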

\begin{proof}[Proof of Lemma \ref{lem:reversibility}]
    Let $\sigma$ and $\eta$ be two communicating configurations. If $\sigma=\eta$ the statement is trivial. We consider the case $\sigma\neq \eta$, thus $\eta=\sigma^{(i)}$ for some $i\in \Lambda$. First we discuss the case $i\in L_k$ for $k=0,\ldots,N-1$ and then the case $i\in L_N$. In the first case we have
\begin{equation}\label{eq:rev1}
\mu_\beta(\sigma)p_\beta(\sigma,\sigma^{(i)})
=
\frac{e^{-\beta H(\sigma)}}{Z_\beta}
\frac{e^{-\beta[H(\sigma^{(i)})-H(\sigma)]_+}}{|\Lambda|}
=
\frac{e^{-\beta H(\sigma)}}{Z_\beta}
\frac{e^{-\beta[\sigma(i) (h-q+2k)]_+}}{|\Lambda|}
\end{equation}
where $k\in \{0,...,q\}$ is the number of the nearest sites with the same spin value. Moreover, we have that
\begin{equation}\label{eq:rev2}
\mu_\beta(\sigma^{(i)}) p_\beta(\sigma^{(i)},\sigma)
=
\frac{e^{-\beta H(\sigma^{(i)})}}{Z_\beta}
\frac{e^{-\beta [-\sigma(i) (h-q+2k)]_+}}{|\Lambda|}.
\end{equation}
By the definition of the Hamiltonian in Equation \eqref{def:Hamiltonian}, we obtain $H(\sigma^{(i)})=H(\sigma)+\sigma(i) (h-q+2k)$ and we get that Equations \eqref{eq:rev1} and \eqref{eq:rev2} are equal.

For the second case, when $i\in L_N$ we have that
\begin{equation}\label{eq:rev3}
\mu_\beta(\sigma)p_\beta(\sigma,\sigma^{(i)})
=
\frac{e^{-\beta H(\sigma)}}{Z_\beta}
\frac{e^{-\beta[\sigma(i) (h-(q-a)+b)]_+}}{|\Lambda|}
\end{equation}
where 
\begin{align}
    a= 
    \begin{cases}
        2 \qquad \text{ if } i\in E_{N;p,q} \\
        3 \qquad \text{ if } i\in I_{N;p,q}
    \end{cases}
\end{align}
and $b\in \{-a,-(a-2),a-2,+a\}$ is the difference between the number of nearest neighbour sites with the same spin and the number of nearest neighbour sites with opposite spin. Furthermore,
\begin{equation}\label{eq:rev4}
\mu_\beta(\sigma^{(i)}) p_\beta(\sigma^{(i)},\sigma)
=
\frac{e^{-\beta H(\sigma^{(i)})}}{Z_\beta}
\frac{e^{-\beta [-\sigma(i) (h-(q-a)+b)]_+}}{|\Lambda|}.
\end{equation}
Again using the Definition  \ref{def:Hamiltonian}, we obtain $H(\sigma^{(i)})=H(\sigma)+\sigma(i) (h-(q-a)+b)$ and we get that the Equations \eqref{eq:rev3} and \eqref{eq:rev4} are equal.
\end{proof}

\subsection{Metastability problem}
When studying the metastable behavior of a model, we are primarily interested in the exit time from metastable states to the stable state. To this end, we define the \emph{first hitting time} of a set
$A\subset \mathcal{X}$ starting from $\sigma \in \mathcal{X}$
\begin{equation}\label{fht}
    \tau^{\sigma}_A:=\inf\{t>0 \,|\, X_t\in A\}.
\end{equation}
Whenever possible we will omit the superscript denoting the starting point $\sigma$ from the notation, and denote by $\mathbb{P}_{\sigma}(\cdot)$ and $\mathbb{E}_{\sigma}[\cdot]$ the probability and the expectation along the trajectories of the process starting at $\sigma$, respectively. 

We focus on the first arrival time of the Markov chain 
$(X_t)_{t \in \mathbb{N}}$
to the set of the {\it stable states}, which correspond to the set of global minima of $H$, starting from an initial local minimum. 

Local minima can be ordered according to their increasing stability level, which is determined by the height of the energy barrier separating them from states with lower energy. More precisely, for any $\sigma \in \mathcal{X}$, let $\mathcal{I}_{\sigma}$ be the set of configurations with energy strictly lower than $H(\sigma)$, i.e.,
\begin{equation}\label{I}
\mathcal{I}_{\sigma}:=\{\eta\in \mathcal{X} \,|\, H(\eta)<H(\sigma)\}.
\end{equation}
Let $\underline{\omega}=\{\omega_1,\ldots,\omega_n\}$ be a finite sequence of configurations in $\mathcal{X}$, where, for each $k=1,...,n-1$, the configuration $\omega_{k+1}$ is obtained from $\omega_k$ by a single spin flip. We call $\underline{\omega}$ a \emph{path} with starting and final configurations $\omega_1$ and $\omega_n$ respectively. We denote the set of all these paths as $\Theta(\omega_1,\omega_n)$ and we indicate its length as $|\underline{\omega}|=n$. Let $A,B$ be two subsets of configurations, we denote by $\Theta(A,B)$ the set of paths between all configurations from $A$ to $B$.

To characterize the metastable states, we need to define the maximal energy along the paths. In particular,
the \emph{communication height} between two configurations $\sigma$ and $\eta$ is the minimum among the maximal energies along the paths in $\Theta(\sigma,\eta)$, i.e.,
\begin{equation}\label{minmax}
\Phi(\sigma,\eta):=\min_{\underline{\omega}\in\Theta(\sigma,\eta)}\max_{\zeta \in \underline{\omega}} H(\zeta).
\end{equation}
Similarly, the communication height between two sets $A, B \subset \mathcal{X}$ is defined as
\begin{equation}
\Phi(A,B):=\min_{\sigma \in A,\eta \in B} \Phi(\sigma,\eta).
\end{equation}
The paths in $\Theta(A, B)$ which realize the min-max problem \eqref{minmax} are called \emph{optimal paths} $ (A \to B)_{opt}$.

In order to find the local minima, we will characterize the configurations according to their \emph{stability level}, defined as
\begin{equation}
V_{\sigma}:=\Phi(\sigma,\mathcal{I}_{\sigma})-H(\sigma),
\end{equation}
If $\mathcal{I}_{\sigma}$ is empty, then we set $V_{\sigma}=\infty$. Since configurations can be classified according to their stability level, a key role is played by the set of all configurations whose stability level exceeds $K$, that is 
\begin{equation}\label{Xv}
\mathcal{X}_K:=\{\sigma \in \mathcal{X} \,\, | \,\, V_{\sigma}> K \}.
\end{equation}

Furthermore, we denote by $\Gamma_{max}$ the \emph{maximal stability level}, i.e.
\begin{equation}\label{Gamma}
    \Gamma_{max}:=\max_{\sigma\in \mathcal{X}\setminus \mathcal{X}^s}V_{\sigma},
\end{equation}
where $\mathcal{X}^s$ is the set of the \emph{stable states}, which are the global minima of the energy.

Now, we are able to introduce the \emph{metastable states}.
The metastable states are those states that attain the maximal stability level $\Gamma_{max}< \infty$, that is 
\begin{equation}\label{Xm}
    \mathcal{X}^m:=\{\sigma\in \mathcal{X}| \, V_{\sigma}=\Gamma_{max} \}.
\end{equation}
Now we define formally the \emph{energy barrier} $\Gamma$ as
\begin{equation}
    \Gamma:=\Phi(\sigma, \eta)-H(\sigma) \qquad \text{with } \sigma \in \mathcal{X}^m, \, \eta \in \mathcal{X}^s.
\end{equation}

In \cite{cirillo2013relaxation}, the authors prove the equivalence between the maximal stability level and the energy barrier.
For this reason, in the rest of the paper, we will use the notation $\Gamma$ to refer indistinctly to the two definitions.
\subsection{Metastable region}

The  {\it critical region} of $h$ for which we will prove metastability is given by $(h^*_1, h^*_2)$ where
\begin{align}\label{eq:specifich}
    &h^*_1=\frac{(q-2)|L_N|-|I_{N}|}{\sum_{j=0}^N|L_j|}, \qquad h_2^*=q-2-\frac{4 \sqrt{q-2} (\lambda_+-\lambda_-)}{a_+ \lambda_++a_- \lambda_-}.
\end{align}
It is easy to see that $(h^*_1,h^*_2)$ is not empty, indeed $h^*_2=h^*_1(1)>h^*_1(N)$ for each value of $N>1$ since $h^*_1(N)$ is a decreasing function of $N$.

Let $\mathbf{\pm 1}$ be the configuration $\sigma$ such that $\sigma(i)=\pm 1$ for every $i\in\Lambda$. Let $\mathbf{\pm 1}^{(\mp)}$ be the configuration $\sigma$ such that $\sigma(i)=\pm 1$ for every $i\in\Lambda\setminus L_N$ and $\sigma(i)=\mp 1$ for every $i\in L_N$.

We define the difference between the energy of a configuration $\sigma$ and the homogeneous state $\textbf{-1}$ to simplify the computation in the rest of the paper,
\begin{align}
    \Delta H(\sigma)=H(\sigma)-H(\textbf{-1}).
\end{align}
It easily follows that for $h \in (h^*_1, h^*_2)$
 \begin{align}
\min_{\sigma \in \mathcal{X}} \Delta H(\sigma)=\Delta H(\textbf{+1}).
    \end{align}
See also Section \ref{sec:other_regions} for more details on the other regions.

In $(h_1^*,h_2^*)$, we identify the metastable and stable states and compute the exit time from the metastable state by estimating the energy barrier that the system must overcome during the transition. This barrier is attained at the critical configurations, which can be intuitively understood as configurations containing clusters of plus spins with a ball-like shape and a peculiar radius, called \emph{critical radius}. Recall Figure \ref{fig:intro_schema} for a schematic representation of the energy landscape.

\begin{definition}\label{critical_radius}
The critical radius $r^*$ is defined as the integer value
\begin{align}
    r^*= \left \lfloor \frac{\log\left ( \frac{4 \sqrt{q-2}+a_- (q-2-h)}{4 \sqrt{q-2}-a_+(q-2-h)} \right )}{\log \left (\frac{\lambda_+}{\lambda_-} \right )} \right \rfloor ,
\end{align}
where $c_{p,q}$ is defined in Equation \eqref{constant_cpq}, $a_{\pm}$ in  \eqref{eq:apm} and $\lambda_{\pm}$ in  \eqref{eq:eigen}. The critical droplet is a ball $\mathcal{B}_{r^*;p,q}(o)$ of radius $r^*$ with a strip $S_{crit} (h)$ attached in the layer $L_{r^*}$.
The critical area is equal to $A^*:= \left|\bigcup_{j=0}^{r^*-1}L_j \right|+k(h)$, where $k(h) \in \left [1, |L_{r^*}| \right)$ is equal to $|S_{crit}(h)|$ and depends on the external magnetic field $h$.
\end{definition}
The critical droplet will be formally defined in Equation \eqref{eq:crit_drop} in the proof of Proposition \ref{teoRP}, an example of a critical droplet can be found in Figure \ref{fig:crit_drop}.
 
\begin{figure}[htb!]
\centering
\includegraphics[scale=0.23]{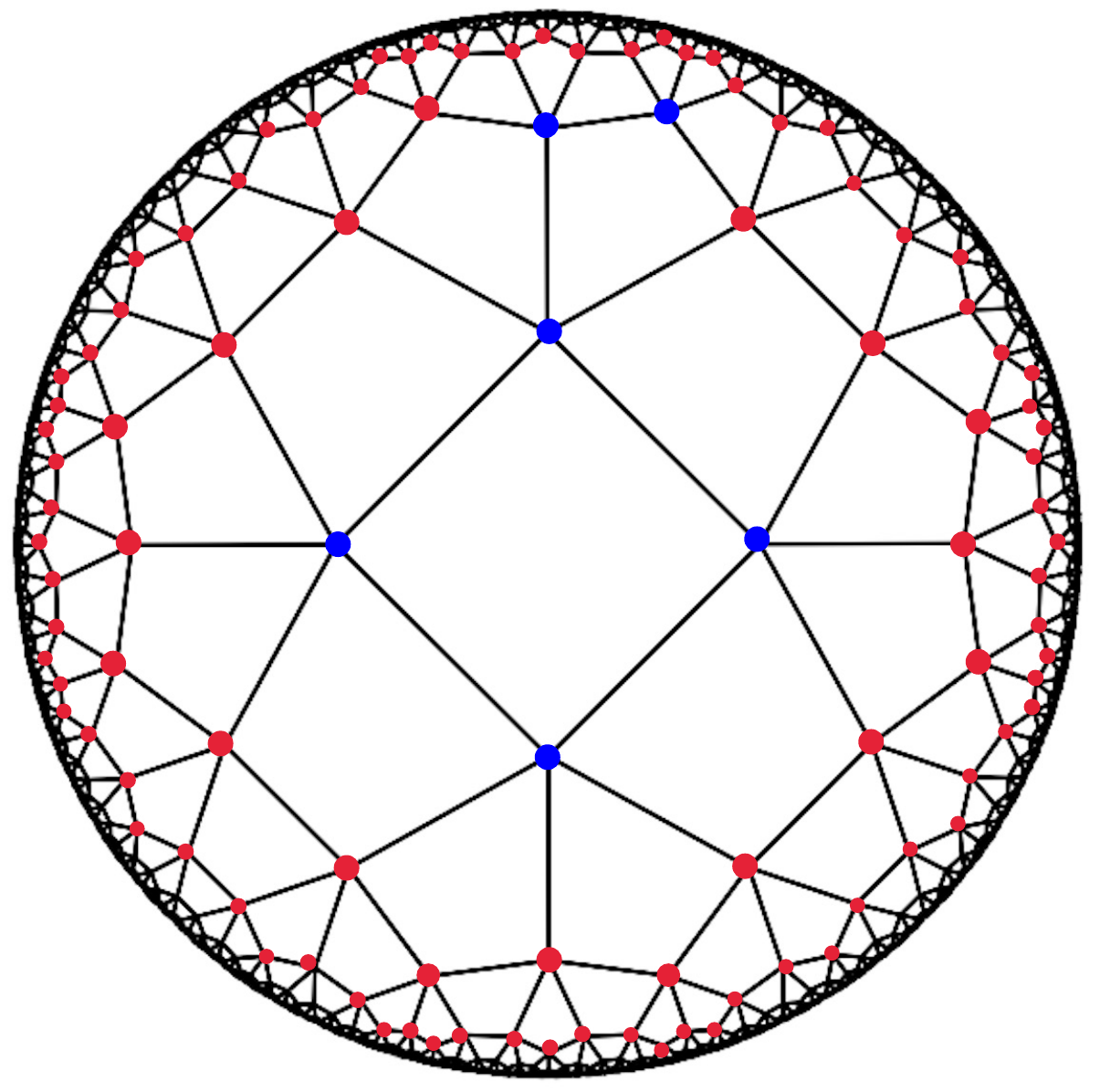}
\caption{Example of a critical droplet with radius $r^*=1$ and strip $S_{crit}(h)$ in the layer $L_1$ of length $k(h)=2$.}\label{fig:crit_drop}
\end{figure}

We note that $r^*$ is a positive integer for $h \in \left (h^*_1, h^*_2 \right )$, see Lemma \ref{remark_tildeh} for more details. The critical droplet will be explicitly constructed in the proof of the recurrence property, see Section \ref{sec:recurrence}. A specific example of a critical droplet with different strips can be found in the Appendix \ref{sec:appendix}.

\begin{lemma}\label{remark_tildeh}
The critical radius $r^*$ is a positive integer.
\end{lemma}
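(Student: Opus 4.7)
The floor in the definition of $r^*$ makes it automatically an integer, so the content of the lemma is that $r^*\geq 1$, i.e., the argument of the floor is $\geq 1$. Under the hyperbolicity condition $(p-2)(q-2)>4$, the eigenvalues satisfy $\lambda_+>\lambda_->0$ with $\lambda_+\lambda_-=1$, so $\log(\lambda_+/\lambda_-)>0$; moreover $a_+>0>a_-$ with $a_++a_->0$, as is readily checked from the explicit forms in \eqref{eq:apm}. The claim is thus equivalent to
\[
\frac{4\sqrt{q-2}+a_-(q-2-h)}{4\sqrt{q-2}-a_+(q-2-h)} \;\geq\; \frac{\lambda_+}{\lambda_-},
\]
provided both sides of this ratio inequality are positive (addressed below).

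I would cross-multiply and collect the $\lambda_\pm$ terms to rewrite the ratio inequality as
\[
(q-2-h)\bigl(a_+\lambda_++a_-\lambda_-\bigr) \;\geq\; 4\sqrt{q-2}\,(\lambda_+-\lambda_-).
\]
The coefficient $a_+\lambda_++a_-\lambda_-$ is strictly positive: writing $a_\pm = \sqrt{(p-2)((p-2)(q-2)-4)}\pm(p-2)\sqrt{q-2}$ one decomposes
\[
a_+\lambda_++a_-\lambda_- = (p-2)\sqrt{q-2}\,(\lambda_+-\lambda_-)+\sqrt{(p-2)((p-2)(q-2)-4)}\,(\lambda_++\lambda_-),
\]
a sum of manifestly positive quantities. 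Dividing through by this positive factor produces exactly $q-2-h\geq q-2-h_2^*$, i.e.\ $h\leq h_2^*$, which is strictly satisfied on the open interval $(h_1^*,h_2^*)$. Hence the argument of the floor is strictly greater than $1$, giving $r^*\geq 1$.

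The remaining technical point is positivity of the denominator $D(h):=4\sqrt{q-2}-a_+(q-2-h)$, needed to justify the cross-multiplication. The key algebraic identity is
\[
(q-2)a_+ - 2\sqrt{q-2} \;=\; 2\sqrt{q-2}\,\lambda_+,
\]
obtained by substituting the explicit form of $a_+$ into the closed form $2\lambda_+=(p-2)(q-2)-2+\sqrt{(p-2)(q-2)((p-2)(q-2)-4)}$. Using the asymptotics $|L_N|\sim c_{p,q}a_+\lambda_+^N$ and $|I_N|\sim 2c_{p,q}\sqrt{q-2}\,\lambda_+^N$, this identity yields
\[
\lim_{N\to\infty} h_1^*(N) \;=\; q-2 - \frac{4\sqrt{q-2}}{a_+},
\]
which is precisely the threshold at which $D$ vanishes. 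Since $h_1^*(N)$ is decreasing in $N$ (as noted after \eqref{eq:specifich}), it stays strictly above this threshold for every finite $N$; combined with $h>h_1^*$ this forces $D(h)>0$, and $a_++a_->0$ then implies the numerator $4\sqrt{q-2}+a_-(q-2-h)$ exceeds $D(h)$ and is in particular positive.

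\textbf{Main obstacle.} The only non-routine step is the algebraic coincidence that $\lim_{N\to\infty}h_1^*(N)$ coincides with the positivity threshold of $D(h)$; without this, the window $(h_1^*,h_2^*)$ would not be guaranteed to lie inside the domain of the logarithm and the lemma would break. Everything else is a direct rearrangement of the definitions of $h_2^*$ and of the critical radius.
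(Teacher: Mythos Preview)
Your proof is correct and follows essentially the same route as the paper's: both arguments (i) verify that the numerator and denominator inside the logarithm are positive, with the denominator handled via the limit $\lim_{N\to\infty}h_1^*(N)=q-2-4\sqrt{q-2}/a_+$ together with the monotonicity of $h_1^*$, and (ii) reduce the inequality ``argument of the floor $>1$'' to the condition $h<h_2^*$. You supply more explicit algebra (the identity $(q-2)a_+-2\sqrt{q-2}=2\sqrt{q-2}\lambda_+$ and the decomposition showing $a_+\lambda_++a_-\lambda_->0$), whereas the paper simply invokes ``direct computation'' at those points, but the logical skeleton is identical.
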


\begin{proof}
In order for the critical radius to be well defined we need that  
\begin{align}
        \begin{cases}
            4 \sqrt{q-2}+a_- (q-2-h)>0, \\
            4 \sqrt{q-2}-a_+ (q-2-h)>0,
        \end{cases}
    \end{align}
    and 
    \begin{align}
        \frac{4 \sqrt{q-2}+a_- (q-2-h)}{4 \sqrt{q-2}-a_+(q-2-h)}>1.
    \end{align}
    The first and the third conditions hold for each value of $h<q-2$. The second condition is equal to required
    \begin{align}
        h>q-2-\frac{4 \sqrt{q-2}}{a_+}.
    \end{align}
   
First, we observe that $h^*_1$ is a decreasing function of $N$ for each value of $p,q \geq 4$ such that $1/p+1/q<1/2$. Then, by a direct computation, we conclude
\begin{align}
    \lim_{N \to \infty} h^*_1(N)=q-2-\frac{4 \sqrt{q-2}}{a_+}.
\end{align}

It easy to see that the denominator is positive since $\lambda_+>\lambda_->0$. Then, we are left to prove that $r^* >0$, i.e.
\begin{align}
\frac{\log\left ( \frac{4 \sqrt{q-2}+a_- (q-2-h)}{4 \sqrt{q-2}-a_+(q-2-h)} \right )}{\log \left (\frac{\lambda_+}{\lambda_-} \right )}  >1
\end{align}
By a direct computation, we find that the previous inequality holds for
\begin{align}
    h<q-2-\frac{4 \sqrt{q-2} (\lambda_+-\lambda_-)}{a_+ \lambda_++a_- \lambda_-}=h^*_2.
\end{align}
\end{proof}

\section{Main results}\label{main}
In this section we present the results regarding metastability phenomena of our model in the parameter region $h \in \left (h^*_1,h^*_2 \right)$, where $h^*_1,h^*_2$ are defined in Equation \eqref{eq:specifich}.

\begin{thm}\label{Identification} (Identification of metastable states)
$\mathcal{X}^m=\{\textbf{-1}\}$ and $\Gamma_{max}=\Gamma^{p,q}$, where $\Gamma^{p,q}$ is a constant depending on $p,q$ and given by
\begin{equation}\label{def:gamma}
    \Gamma^{p,q}:=c_{p,q} \left [(a_- \lambda_-^{r^*} + a_+ \lambda_+^{r^*})(q-2-h)-4 \sqrt{q-2} (\lambda_+^{r^*}-\lambda_-^{r^*}) \right ]
    +K^*,
\end{equation}
where $a_{\pm}$ is defined in Equation \eqref{eq:apm}, $\lambda_{\pm}$ in \eqref{eq:eigen}, $c_{p,q}$ in \eqref{constant_cpq}, and $K^*$ is a constant depending on the three parameters $h,p,q$.
\end{thm}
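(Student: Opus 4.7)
The plan is to establish the theorem in three logical steps: first show that $\textbf{-1}$ is a strict local minimum of $H$; then compute the stability level $V_{\textbf{-1}}$ by matching upper and lower bounds on the communication height $\Phi(\textbf{-1}, \textbf{+1})$; finally argue that every configuration different from $\{\textbf{-1}, \textbf{+1}\}$ has strictly smaller stability level, so that $\mathcal{X}^m = \{\textbf{-1}\}$ and $\Gamma_{max} = V_{\textbf{-1}} = \Gamma^{p,q}$.

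For the local minimum property, I would invoke the explicit energy differences already computed in the proof of Lemma \ref{lem:reversibility}. For a single spin flip at $i \in L_k$ with $k<N$, the change in $H$ starting from $\textbf{-1}$ equals $q-h > 0$, since in the critical region $h < h_2^* < q-2$. For $i \in L_N$, the chosen boundary corrections $\frac{q-3}{2}$ on $I_{N;p,q}$ and $\frac{q-2}{2}$ on $E_{N;p,q}$ also make the cost of a single $-1 \to +1$ flip strictly positive. Hence $\textbf{-1}$ is a strict local minimum and $V_{\textbf{-1}} > 0$.

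For the upper bound $\Phi(\textbf{-1}, \textbf{+1}) - H(\textbf{-1}) \leq \Gamma^{p,q}$, I would use the reference path constructed in Section \ref{bound}: starting from $\textbf{-1}$, sequentially flip to $+1$ all spins in $L_0$, then in $L_1$, and so on up to $L_{r^*-1}$, obtaining the positive ball $B_{r^*;p,q}(\textbf{o})$; then append plus spins one by one in $L_{r^*}$ along a connected strip of length $k(h)$, reaching the critical droplet of Definition \ref{critical_radius}. An explicit computation combining the layer recursion \eqref{eq:recur} with the eigenvalue formulas \eqref{eq:eigen} and \eqref{eq:apm} shows that the energy along this path is maximized exactly at the critical droplet, and equals $H(\textbf{-1}) + \Gamma^{p,q}$. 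The bracketed expression in \eqref{def:gamma} arises from the interface $|I_{r^*;p,q}| = 2c_{p,q}\sqrt{q-2}(\lambda_+^{r^*} - \lambda_-^{r^*})$ between the ball and the surrounding minuses, balanced against the magnetic-field gain, while $K^*$ encodes the additional cost of the strip of length $k(h)$. Once the strip exceeds the critical size, adding one more plus spin in $L_{r^*}$ strictly decreases the energy, and the path can be continued monotonically down to $\textbf{+1}$.

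For the matching lower bound and the uniqueness of the metastable state I would invoke the geometric analysis of Section \ref{geom1} and the recurrence property of Section \ref{sec:recurrence}. Through the bijection between plus-clusters and polyamonds, one shows that any $\sigma \neq \textbf{-1}, \textbf{+1}$ whose largest plus-cluster has area strictly less than the critical area $A^*$ admits a single spin flip strictly decreasing $H$, so $V_\sigma < \Gamma^{p,q}$. Among configurations containing a plus-cluster of area exactly $A^*$, the minimum of $H$ is attained by the ball-plus-strip configuration and equals $H(\textbf{-1}) + \Gamma^{p,q}$, yielding $\Phi(\textbf{-1}, \textbf{+1}) - H(\textbf{-1}) \geq \Gamma^{p,q}$. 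Finally, any supercritical cluster admits a descent path to $\textbf{+1}$ with energy barrier strictly below $\Gamma^{p,q}$, so these configurations also satisfy $V_\sigma < \Gamma^{p,q}$. Combining the three cases gives both the identification $\mathcal{X}^m = \{\textbf{-1}\}$ and $\Gamma_{max} = \Gamma^{p,q}$. The main obstacle is the lower bound on $\Phi(\textbf{-1}, \textbf{+1})$, equivalent to the isoperimetric statement that balls-with-strips minimize $H$ among plus-clusters of given area. Because on $\mathcal{L}_{p,q}$ boundaries grow at the same rate as volumes, the classical perimeter-versus-area argument from $\mathbb{Z}^d$ fails; instead one must exploit the layered structure inherited from \cite{RNO} together with the polyamond bijection of Section \ref{geom1} to show that adding cells outside a maximal ball is strictly more costly than completing an incomplete layer. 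Establishing this minimality, and controlling the discrete gradient of $H$ as the area crosses $A^*$, is the technical heart of the argument.
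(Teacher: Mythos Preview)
Your overall architecture matches the paper's: upper bound on $\Phi(\textbf{-1},\textbf{+1})-H(\textbf{-1})$ via the layered reference path, lower bound via the manifold $\nu_{A^*}$ combined with the isoperimetric result for polyamonds (the paper cites \cite{d2025minimal} for exactly this), and then an appeal to the recurrence property to conclude $\mathcal{X}^m=\{\textbf{-1}\}$. That part is fine.

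There is, however, a genuine gap in your treatment of the stability levels of non-trivial configurations. You claim that any $\sigma$ whose largest plus-cluster has area strictly below $A^*$ ``admits a single spin flip strictly decreasing $H$, so $V_\sigma<\Gamma^{p,q}$''. This is false. The regular balls $\sigma_n=B_{n;p,q}$ with $1\le n\le r^*$ all have area below $A^*$, yet each is a strict local minimum of $H$: removing a plus from the outer layer $L_{n-1}$ costs $h-(q-4)>0$ for a vertex with two plus-neighbours, and adding a plus in $L_n$ costs $q-2-h>0$. No single flip decreases the energy. The paper devotes all of Section~\ref{sec:recurrence}, Step~1 (``Recurrence for balls'') to constructing, for each such $\sigma_n$, a multi-step path that dismantles or completes an entire layer, and shows that the resulting barrier is exactly $K^*$, not zero. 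The conclusion $V_\sigma\le K^*<\Gamma^{p,q}$ for every $\sigma\notin\{\textbf{-1},\textbf{+1}\}$ is what feeds into \cite[Theorem~2.4]{CNrelax13} to identify $\mathcal{X}^m$; your single-flip shortcut would not deliver this.

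A smaller point: your description of $K^*$ as ``the additional cost of the strip of length $k(h)$'' is not quite right. In the paper, $K^*=K^*(h)$ is defined as the maximal excursion of $H$ along the portion of the reference path between $\sigma_{r^*}$ and $\sigma_{r^*+1}$ (Equation~\eqref{stabR1}); its value depends on the alternating pattern of $I_{r^*+1;p,q}$ and $E_{r^*+1;p,q}$ vertices and on $h$ in a non-trivial way, and it coincides with the uniform recurrence bound in Proposition~\ref{teoRP}. This dual role of $K^*$---both the local barrier for every non-trivial $\sigma$ and the last increment to reach the saddle---is what makes the argument close.
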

A necessary result for validating the previous theorem is found in the following proposition, in which we state that the dynamics sped up by a time factor of order $e^{\beta K^*}$ reaches with high probability the set $\{ \textbf{-1}, \textbf{+1} \}$.
\begin{proposition}\label{teoRP} (Recurrence property). Let $K^*$ be the constant given in Equation \eqref{def:gamma}, then \mbox{$\mathcal{X}_{K^*}=\{ \textbf{-1}, \textbf{+1} \}$} and for any $\epsilon>0$
\[
\lim_{\beta \rightarrow \infty} \frac{1}{\beta}\log \left ( \mathbb{P}_{\sigma}\left(\tau_{\mathcal{X}_{V^*}}> e^{\beta(K^*+\epsilon)} \right)\right ) = -\infty.
\]
\end{proposition}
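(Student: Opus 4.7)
The plan is to split the proposition into two stages. Stage one is the purely combinatorial/energetic identity $\mathcal{X}_{K^*}=\{\textbf{-1},\textbf{+1}\}$, and stage two is the probabilistic upper bound on $\tau_{\mathcal{X}_{K^*}}$, which I would obtain by invoking a standard recurrence theorem of the pathwise approach (see e.g.\ \cite{manzo2004essential, cirillo2013relaxation}): once $K^*$ is identified as the largest stability level outside $\{\textbf{-1},\textbf{+1}\}$, a general large-deviations estimate shows that from any starting configuration $\sigma$ the hitting time of $\mathcal{X}_{K^*}$ is, with super-exponentially high probability, bounded by $e^{\beta(K^*+\epsilon)}$. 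So the real content is stage one.

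For the inclusion $\{\textbf{-1},\textbf{+1}\}\subseteq\mathcal{X}_{K^*}$, I would argue that $\textbf{+1}$ is the unique global minimum of $H$ in the regime $h\in(h_1^*,h_2^*)$ (a direct consequence of the definition of $h_1^*$ in \eqref{eq:specifich}), so that $\mathcal{I}_{\textbf{+1}}=\emptyset$ and thus $V_{\textbf{+1}}=\infty>K^*$. For $\textbf{-1}$, granted Theorem \ref{Identification}, $V_{\textbf{-1}}=\Gamma^{p,q}$, and \eqref{def:gamma} decomposes $\Gamma^{p,q}=K^*+c_{p,q}[(a_-\lambda_-^{r^*}+a_+\lambda_+^{r^*})(q-2-h)-4\sqrt{q-2}(\lambda_+^{r^*}-\lambda_-^{r^*})]$; using the definition of $r^*$ (Definition \ref{critical_radius}) and the bound $h<h_2^*$, the bracketed term is strictly positive, hence $V_{\textbf{-1}}>K^*$.

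The core work is the reverse inclusion: for every $\sigma\notin\{\textbf{-1},\textbf{+1}\}$, construct a path $\underline\omega\in\Theta(\sigma,\mathcal{I}_\sigma)$ whose maximum energy satisfies $\max_{\zeta\in\underline\omega}H(\zeta)\le H(\sigma)+K^*$. Using the bijection with polyiamonds developed in Section \ref{geom1}, I would identify the plus-spin region as a disjoint union of maximal connected clusters and split into two cases according to whether any cluster is larger than the critical area $A^*$. In the \emph{sub-critical} case (all plus clusters are strictly smaller than the critical droplet), I would prescribe a shrinking procedure: remove plus spins one at a time in an order dictated by the layer structure (erasing "protuberances" inside a layer before dismantling a full layer, working from the outermost occupied layer inward). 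A single-spin computation using \eqref{def:Hamiltonian} shows that each such flip costs at most $q-2-h$ plus an overshoot bounded by the critical droplet energy, and the worst intermediate configuration along this descent is precisely a sub-critical droplet, whose excess energy above $H(\sigma)$ is at most $K^*$ by construction of $K^*$. In the \emph{super-critical} case, the analogous growing procedure adds plus spins layer by layer (filling first an annular strip of the current empty layer, then completing the layer), reaching either $\textbf{+1}$ or a lower-energy state; the cost of each flip is bounded by $q-2k-h$ where $k\ge 1$ is the number of plus neighbors, uniformly controlled by $K^*$.

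The main obstacle I expect is the precise case analysis for configurations that are "close to critical", where clusters may have irregular shapes or be distributed across several layers, and where naive shrinking can temporarily raise the energy above $K^*$. Resolving this requires exploiting the recursion \eqref{eq:recur}--\eqref{eq:T1} between the sets $I_{n;p,q}$ and $E_{n;p,q}$ to show that the polyiamond realizing the minimal boundary in each layer is precisely the strip $S_{crit}(h)$ of Definition \ref{critical_radius}, and that departures from this optimal shape only \emph{decrease} the barrier seen along the descent. Once the sub-critical and super-critical reductions are established for optimally shaped clusters, an isoperimetric comparison on the hyperbolic lattice (the positivity of $i_e(\mathcal{L}_{p,q})$, cf.\ \cite{haggstrom2002explicit}) transfers the bound to arbitrary cluster shapes, completing the identification $\mathcal{X}_{K^*}=\{\textbf{-1},\textbf{+1}\}$ and hence, via the standard recurrence theorem, the probabilistic bound in the statement.
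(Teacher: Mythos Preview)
Your two-stage architecture is right, and your invocation of the general recurrence theorem (\cite{manzo2004essential} or \cite[Theorem 3.1]{MNOS04}) for stage two is exactly what the paper does. Two substantive issues remain in stage one.

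First, a circularity. You justify $V_{\textbf{-1}}>K^*$ by ``granting Theorem~\ref{Identification}''. But in this paper the proof of Theorem~\ref{Identification} explicitly relies on Proposition~\ref{teoRP} (see the appeal to Section~\ref{sec:recurrence} when verifying the hypotheses of \cite[Theorem 2.4]{CNrelax13}). You need an independent lower bound $\Phi(\textbf{-1},\textbf{+1})-H(\textbf{-1})\ge\Gamma^{p,q}>K^*$, which the paper obtains via the manifold argument on $\nu_{A^*}$ in Step~2 of Section~\ref{bound}; that argument does not use Proposition~\ref{teoRP} and would close the loop.

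Second, and more seriously, your proposed reduction from arbitrary cluster shapes to optimal ones via ``isoperimetric comparison'' is a genuine gap. The isoperimetric inequality controls \emph{energies} (perimeters) of configurations at fixed area, but $V_\sigma$ is a statement about \emph{paths}: you must exhibit, for each irregular $\sigma$, a path to $\mathcal{I}_\sigma$ whose maximum stays below $H(\sigma)+K^*$. Knowing that $\sigma$ has larger perimeter than the ball of the same area does not supply such a path, nor does it preclude $\sigma$ from sitting in a deep local well. The paper's decomposition is different from yours and sidesteps this: rather than splitting by area (sub- vs.\ super-critical), it splits by \emph{shape} (regular balls vs.\ everything else). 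For regular balls the grow/shrink-one-layer paths give $V_\sigma\le K^*(h)$, essentially as you sketch. For any configuration containing a non-regular cluster, a separate case analysis (a plus with at most one plus neighbour; a hole surrounded by pluses; competing strips in inner and outer layers) produces an explicit path to lower energy with overshoot at most $2(h-q+4)$, which is strictly smaller than $K^*(h)$ in the regime $h<h_2^*$. The point is that irregular clusters are \emph{cheap} to escape, not that they inherit the ball's barrier by comparison; this is what your isoperimetric shortcut misses.
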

The recurrence property implies that the system reaches with high probability
either the state $\textbf{-1}$ or the  ground state $\textbf{+1}$ in a time shorter than 
$e^{\beta (K^*+\epsilon)}$, uniformly in the starting configuration $\sigma$ for any $\epsilon >0$.

In the next theorems, we give the asymptotic behavior (for $\beta \to \infty$) of the transition time for the system started in the metastable state. In particular, in Theorem \ref{thm:transitiontime} we estimate the transition time and in Theorem \ref{teotime'} we give its asymptotic distribution. 
\begin{thm}\label{thm:transitiontime} (Asymptotic behavior of $\tau_{\textbf{+1}}$ in probability) For any $\epsilon>0$, we have
\begin{equation}
    \lim_{\beta \to \infty} \mathbb{P}_{\textbf{-1}}\left(e^{\beta(\Gamma^{p,q}-\epsilon)}< \tau_{\textbf{+1}}<e^{\beta(\Gamma^{p,q}+\epsilon)} \right)=1.
\end{equation}
\end{thm}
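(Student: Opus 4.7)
The plan is to prove the probabilistic sandwich by establishing separately
\begin{equation*}
\lim_{\beta\to\infty}\mathbb{P}_{\textbf{-1}}\bigl(\tau_{\textbf{+1}} > e^{\beta(\Gamma^{p,q}-\epsilon)}\bigr)=1, \qquad \lim_{\beta\to\infty}\mathbb{P}_{\textbf{-1}}\bigl(\tau_{\textbf{+1}} < e^{\beta(\Gamma^{p,q}+\epsilon)}\bigr)=1.
\end{equation*}
Both inequalities follow from the pathwise large-deviation machinery for reversible Metropolis dynamics at low temperature, using as inputs Theorem~\ref{Identification} (identifying $\{\textbf{-1}\}$ as the unique metastable state with stability level $\Gamma^{p,q}$) and Proposition~\ref{teoRP} (recurrence to $\{\textbf{-1},\textbf{+1}\}$ on the shorter time scale $e^{\beta K^*}$, where $K^*<\Gamma^{p,q}$ by strict positivity of the first summand in \eqref{def:gamma} throughout $h\in(h_1^*,h_2^*)$).

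For the lower bound I would use that $V_{\textbf{-1}}=\Gamma^{p,q}$, meaning every trajectory from $\textbf{-1}$ to $\mathcal{I}_{\textbf{-1}}$ crosses a configuration of energy at least $H(\textbf{-1})+\Gamma^{p,q}$. A standard super-exponential estimate, obtained by union-bounding the Metropolis acceptance probabilities over single-spin-flip trajectories of length up to $e^{\beta(\Gamma^{p,q}-\epsilon)}$, yields $\mathbb{P}_{\textbf{-1}}(\tau_{\mathcal{I}_{\textbf{-1}}}<e^{\beta(\Gamma^{p,q}-\epsilon)})\to 0$. Since $H(\textbf{+1})<H(\textbf{-1})$ gives $\textbf{+1}\in\mathcal{I}_{\textbf{-1}}$ and hence $\tau_{\textbf{+1}}\geq\tau_{\mathcal{I}_{\textbf{-1}}}$, the lower bound follows.

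For the upper bound I would combine the reference path $\underline{\omega}^*:\textbf{-1}\to\textbf{+1}$ constructed in Section~\ref{bound}, along which $\max_{\zeta\in\underline{\omega}^*} H(\zeta)=H(\textbf{-1})+\Gamma^{p,q}$, with an iterated-attempts scheme. In a single attempt, the probability of tracing the critical portion of $\underline{\omega}^*$ all the way to $\textbf{+1}$ is bounded below by $e^{-\beta(\Gamma^{p,q}+\epsilon/3)}$. Between attempts, Proposition~\ref{teoRP} forces a regeneration in $\{\textbf{-1},\textbf{+1}\}$ within a time $e^{\beta(K^*+\epsilon/3)}$; by the strong Markov property applied at these regeneration times, the successive attempts are independent, and the number of trials needed to reach $\textbf{+1}$ is dominated by a geometric random variable of mean at most $e^{\beta(\Gamma^{p,q}+\epsilon/3)}$. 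Summing the cost of roughly $e^{\beta(\Gamma^{p,q}+\epsilon/2)}$ attempts, each of duration at most $e^{\beta(K^*+\epsilon/3)}$, yields a total time bounded by $e^{\beta(\Gamma^{p,q}+\epsilon)}$ with probability tending to one, crucially exploiting $K^*<\Gamma^{p,q}$. The main obstacle is converting this heuristic into a rigorous argument: one must rule out that the dynamics lingers in intermediate wells between regenerations, and this is precisely what the strict inequality $K^*<\Gamma^{p,q}$ from Proposition~\ref{teoRP} guarantees, making the geometric-trials bookkeeping close.
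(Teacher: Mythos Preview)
Your proposal is correct, but it differs from the paper's approach in level of abstraction. The paper's proof is a single sentence: once Theorem~\ref{Identification} establishes $\mathcal{X}^m=\{\textbf{-1}\}$ with maximal stability level $\Gamma_{max}=\Gamma^{p,q}$, the claim follows directly from \cite[Theorem~4.1]{MNOS04}, a model-independent result for reversible Metropolis chains asserting exactly the probabilistic sandwich for $\tau_{\mathcal{X}^s}$ started from a metastable state. What you have written is essentially a sketch of the \emph{proof} of that black-box theorem, specialized to the present setting: the lower bound via the communication-height bottleneck and a union bound over short trajectories, and the upper bound via iterated attempts along the reference path with regenerations driven by Proposition~\ref{teoRP}. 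Your version is more self-contained and makes explicit why the strict inequality $K^*<\Gamma^{p,q}$ is needed, at the cost of reproducing standard large-deviation bookkeeping that the paper simply outsources to the literature.
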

\begin{thm}\label{teotime'} (Asymptotic distribution of $\tau_{\textbf{+1}}$) 
Let $T_{\beta}:= \inf\{ n \geq 1 \, | \, \mathbb{P}_{\textbf{-1}}(\tau_{\textbf{+1}} \leq n) \geq 1-e^{-1}\}$ 
\begin{equation}\label{Ptime'}
    \lim_{\beta \to \infty} \mathbb{P}_{\textbf{-1}}\left(\tau_{\textbf{+1}}>tT_{\beta} \right)=e^{-t}
\end{equation}
and 
\begin{equation}\label{Etime'}
    \lim_{\beta \to \infty} \frac{\mathbb{E}_{\textbf{-1}}(\tau_{\textbf{+1}})}{T_{\beta}}=1.
\end{equation}
\end{thm}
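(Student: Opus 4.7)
The plan is to deduce Theorem \ref{teotime'} from a general asymptotic exponentiality result for reversible Metropolis chains with a unique metastable state, in the spirit of Theorem 4.15 of \cite{manzo2004essential} (see also \cite{olivieri2005large, cirillo2013relaxation}). Such a result asserts that if (i) the chain is reversible, (ii) the metastable set is a singleton $\{\sigma_m\}$ with $V_{\sigma_m}=\Gamma$, and (iii) the recurrence time to $\{\sigma_m,\sigma_s\}$ is of order $e^{\beta K^*}$ with $K^*<\Gamma$, then $\tau_{\sigma_s}/T_{\beta}$ converges in distribution to an $\mathrm{Exp}(1)$ random variable and $\mathbb{E}_{\sigma_m}[\tau_{\sigma_s}]/T_{\beta}\to 1$.

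Verifying the hypotheses will amount almost entirely to invoking results already established in the excerpt. Reversibility is Lemma \ref{lem:reversibility}. The identification $\mathcal{X}^m=\{\textbf{-1}\}$ with $\Gamma_{max}=\Gamma^{p,q}$ is Theorem \ref{Identification}. Proposition \ref{teoRP} yields the recurrence on scale $e^{\beta K^*}$, and the crucial strict inequality $K^*<\Gamma^{p,q}$ is read off directly from \eqref{def:gamma}: for $h\in(h_1^*,h_2^*)$ the first bracket there is strictly positive, which is essentially what the defining condition on $r^*$ in Lemma \ref{remark_tildeh} encodes. Finally, Theorem \ref{thm:transitiontime} fixes the correct exponential scale, and together with the above it will follow that $T_{\beta}\asymp e^{\beta\Gamma^{p,q}}$ up to sub-exponential corrections.

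The core mechanism I would reproduce, if a self-contained proof were preferred over citing the general theorem, is a renewal/loss-of-memory argument. Starting from $\textbf{-1}$, one uses the strong Markov property at successive returns to $\textbf{-1}$ to decompose the trajectory into i.i.d.\ excursions. Each excursion ends either by returning to $\textbf{-1}$ or by hitting $\textbf{+1}$; the success probability per excursion is $p_{\beta}\asymp e^{-\beta\Gamma^{p,q}}$ (from Theorem \ref{Identification} and the reversibility-based Markov chain tree bounds on capacities), while an individual excursion lasts at most $e^{\beta(K^*+\epsilon)}$ w.h.p.\ by Proposition \ref{teoRP}. The number of excursions before success is then geometric with parameter $p_\beta$, and the standard computation $\mathbb{P}(\mathrm{Geom}(p_\beta)>t/p_\beta)\to e^{-t}$, combined with the fact that individual excursion lengths are negligible on the scale $1/p_\beta$, will yield \eqref{Ptime'}.

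For the convergence of expectations \eqref{Etime'}, I would additionally rule out pathological long-tail contributions by iterating Proposition \ref{teoRP} to obtain a geometric tail bound of the form $\mathbb{P}_{\textbf{-1}}(\tau_{\textbf{+1}}>k\, e^{\beta(\Gamma^{p,q}+\epsilon)})\leq (1-c_\beta)^k$, with $c_\beta$ bounded below on the correct scale; this provides the uniform integrability of $\tau_{\textbf{+1}}/T_{\beta}$ required to upgrade convergence in distribution to convergence of the mean. The main obstacle I anticipate is not the distributional limit itself, which is by now a standard consequence of the metastability toolbox, but the careful control of sub-exponential prefactors needed to identify $T_{\beta}$ with $e^{\beta\Gamma^{p,q}}$ up to $o(\beta)$ corrections in the exponent; this in turn rests on the quantitative strict gap $\Gamma^{p,q}-K^*>0$ inherited from the combinatorial energy estimates of Section \ref{bound}, which must be checked to remain strict uniformly in the parameter window $h\in(h_1^*,h_2^*)$.
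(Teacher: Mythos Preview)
Your proposal is correct and follows essentially the same approach as the paper: the paper's own proof simply notes that Proposition~\ref{teoRP} verifies the hypotheses of \cite[Theorem~4.15]{MNOS04} with $\eta_0=\{\textbf{-1}\}$ and $T'_\beta=e^{\beta(V^*+\epsilon)}$, and then invokes that theorem directly to obtain \eqref{Ptime'} and \eqref{Etime'}. Your additional verification of the strict gap $K^*<\Gamma^{p,q}$ and the sketched renewal argument are more than the paper provides, but the core strategy---reduce to the general exponentiality theorem via the recurrence property---is identical.
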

The following theorem gives an estimate of the mixing time and the spectral gap for our model.
\begin{thm}\label{TMIX} (Mixing time and spectral gap) For any $0<\epsilon<1$ we have
\begin{equation}\label{lim2PCA}
 \lim_{\beta \rightarrow \infty}{\frac{1}{\beta}\log{ t^{mix}_\beta(\epsilon)}}=\Gamma^{p,q},
\end{equation}
 and there exist two constants $0<c_1<c_2<\infty$ independent of $\beta$ such that for every $\beta>0$
\begin{equation}\label{rocompresopca}
 c_1e^{-\beta(\Gamma^{p,q}+\gamma_1)} \leq \rho_{\beta} \leq c_2e^{-\beta(\Gamma^{p,q}-\gamma_2)},
\end{equation}
 where $\gamma_1,\gamma_2$ are functions of $\beta$ that vanish for $\beta\to\infty$, and $\rho_{\beta}$ is the spectral gap.
\end{thm}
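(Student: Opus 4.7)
The plan is to deduce Theorem \ref{TMIX} as a corollary of general statements from the pathwise approach to metastability, using the work already done in Theorems \ref{Identification}, \ref{thm:transitiontime} and Proposition \ref{teoRP}. The general framework (see for instance \cite{manzo2004essential, cirillo2013relaxation, bovier2016metastability}) asserts that, for a reversible Metropolis chain on a finite state space, both the spectral gap and the mixing time are governed, up to sub-exponential corrections, by the maximal stability level $\Gamma_{max}$. Concretely, the hypotheses required by these general theorems are: (i) identification of the set of stable states $\mathcal{X}^s$ and of the metastable set $\mathcal{X}^m$; (ii) the recurrence property, expressing that from any $\sigma\in\mathcal{X}$ the dynamics reaches $\mathcal{X}^s\cup\mathcal{X}^m$ within time $e^{\beta(K^*+\epsilon)}$ w.h.p.; and (iii) the equality $\Gamma_{max}=\Phi(\mathbf{-1},\mathbf{+1})-H(\mathbf{-1})=\Gamma^{p,q}$. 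All three have been established: (i) in Theorem \ref{Identification}, (ii) in Proposition \ref{teoRP}, and (iii) as part of the proof of Theorem \ref{Identification}.

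With these inputs available, I would proceed in two steps. First, for the spectral gap, the upper bound on $\rho_\beta$ is obtained via the variational characterization
\[
\rho_\beta \;=\; \inf_{f}\;\frac{\tfrac{1}{2}\sum_{\sigma,\eta} \mu(\sigma)p(\sigma,\eta)(f(\eta)-f(\sigma))^2}{\mathrm{Var}_\mu(f)},
\]
applied to a suitable test function, the natural choice being $f=\mathbf{1}_{\mathcal{B}(\mathbf{-1})}$ where $\mathcal{B}(\mathbf{-1})$ denotes the basin of attraction of the metastable well identified in Theorem \ref{Identification}. The Dirichlet form is then concentrated on configurations near the critical droplet and is bounded above by $e^{-\beta\Gamma^{p,q}}$ times a polynomial factor, while $\mathrm{Var}_\mu(f)$ is of constant order. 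The lower bound on $\rho_\beta$ follows from the standard path argument of Jerrum--Sinclair, using the reference path constructed in Section~\ref{bound} to show that every edge of the dynamics is crossed with cost at most $e^{\beta(\Gamma^{p,q}+\gamma_1)}$.

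Second, the mixing time statement \eqref{lim2PCA} is obtained by combining the spectral gap bound \eqref{rocompresopca} with the hitting-time estimate of Theorem \ref{thm:transitiontime}. On the one hand, the relation $t^{mix}_\beta(\epsilon) \leq C\log(1/\epsilon)/\rho_\beta$ provides the upper bound $\limsup_{\beta\to\infty} \beta^{-1}\log t^{mix}_\beta(\epsilon) \leq \Gamma^{p,q}$. On the other hand, the lower bound is a direct consequence of Theorem \ref{thm:transitiontime}: since $\tau_{\mathbf{+1}}\approx e^{\beta\Gamma^{p,q}}$ starting from $\mathbf{-1}$, the chain cannot have mixed in a time much shorter than $e^{\beta\Gamma^{p,q}}$, because total variation distance from equilibrium would remain close to $1$. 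This is made precise by the general inequality $t^{mix}_\beta(\epsilon)\geq (1-\epsilon)\,\mathbb{E}_{\mathbf{-1}}[\tau_{\mathbf{+1}}]/|\mathcal{X}|$ up to a factor $\mu(\mathbf{+1})$, combined with the estimate in Theorem \ref{thm:transitiontime}.

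The only non-mechanical step is verifying that our model fits the hypotheses of the general theorems in the cited references without structural modification; in particular, we need the recurrence property formulated in Proposition \ref{teoRP} to hold with $K^*<\Gamma^{p,q}$ so that the exponential corrections $\gamma_1,\gamma_2$ can be absorbed as $\beta\to\infty$. This is guaranteed by the explicit form of $\Gamma^{p,q}$ in Equation~\eqref{def:gamma}, which exhibits $K^*$ as one of its two additive contributions. Once this bookkeeping is in place, the statement of Theorem \ref{TMIX} follows directly, and no new combinatorial or geometric analysis is required beyond what has already been developed in Sections \ref{sec:recurrence}--\ref{sec:other_theorems}.
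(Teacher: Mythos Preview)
Your proposal is correct and takes essentially the same approach as the paper: both reduce the statement to the identification $\Gamma_{max}=\Gamma^{p,q}$ from Theorem~\ref{Identification} and then invoke general pathwise-approach machinery. The paper is simply more concise, citing \cite[Proposition~3.24 and Example~3]{nardi2016hitting} directly rather than sketching the variational and path-counting arguments that underlie that result; your outline is effectively a summary of how such general theorems are proved.
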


\section{Geometry of the model: clusters and polyamonds}\label{geom1}
To study the transition between $\mathcal{X}^m$ and $\mathcal{X}^s$, it is convenient
to associate to each configuration
$\sigma \in \mathcal{X}$
certain geometrical objects and analyze their properties.
To this end, recall that $\mathcal{L}_{p,q}$ is the discrete lattice embedded in $\mathbb{H}^2$ and $\mathcal{L}_{q,p}$ is its dual, also embedded in $\mathbb{H}^2$.

\begin{definition}
Given a configuration $\sigma \in \mathcal{X}$, consider the set $C(\sigma) \subseteq \mathbb{H}^2$ defined as the union of the $q$-gons centered at sites $i \in \mathcal{L}_{p,q}$ with the boundary contained in the dual lattice $\mathcal{L}_{q,p}$ and such that $\sigma(i)=+1$. The maximal connected components $C_{1}, \ldots, C_{m}, m \in \mathbb{N},$ of $C(\sigma)$ are called clusters of pluses.
\end{definition}

For an example see Figures \ref{fig:Cl} and \ref{fig:Cl2}.

\begin{figure}[!hbtp]
\centering
    \begin{minipage}{0.45\textwidth}
        \centering
        \includegraphics[width=0.74\textwidth]{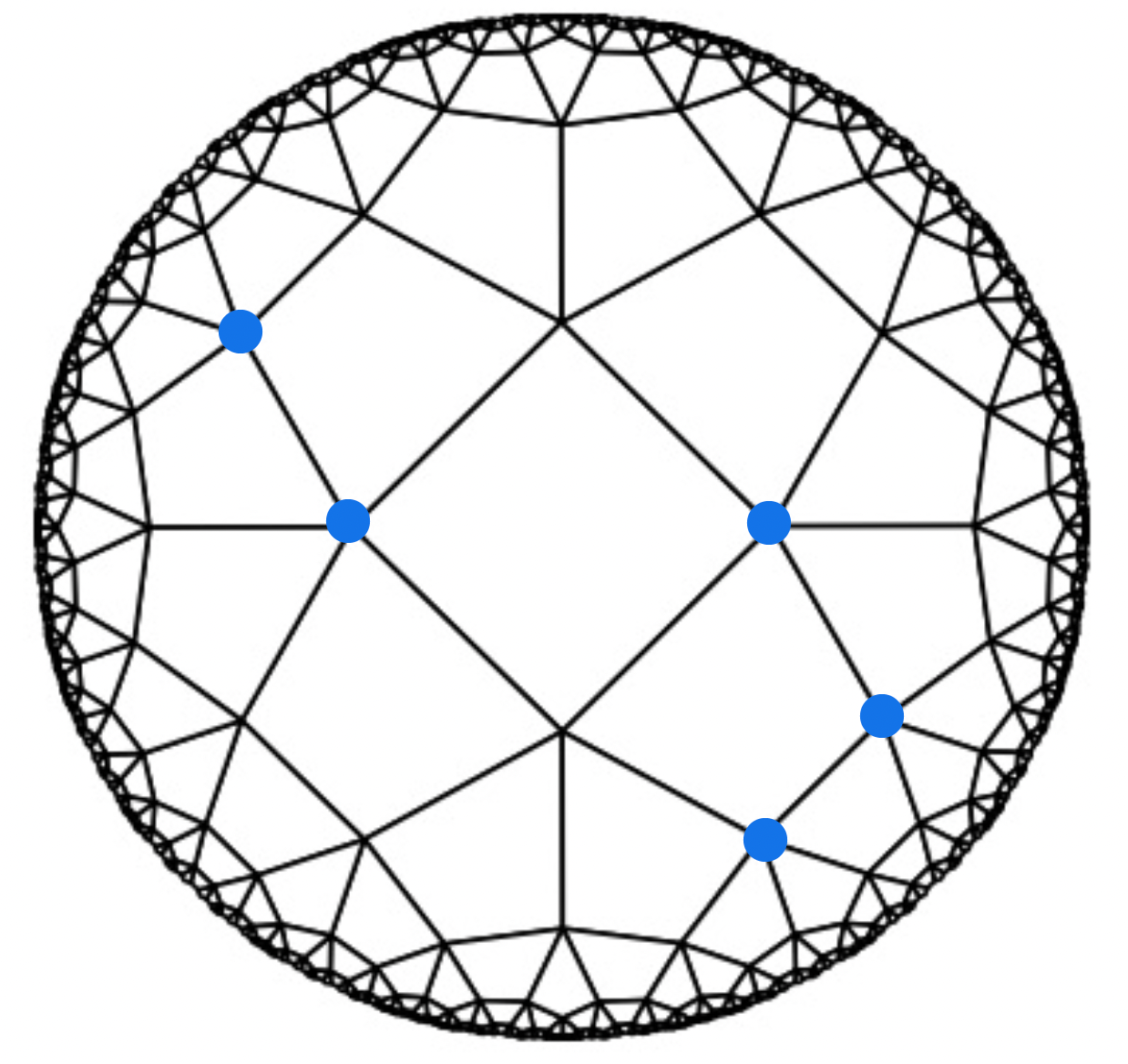} 
         \caption{Example of a configuration on $B_{5;4,5}(\textbf{o})$. Blue vertices represent plus spins and the remaining non-colored vertices $-1$ spins. }\label{fig:Cl}
    \end{minipage}\hfill
     \centering
    \begin{minipage}{0.45\textwidth}
        \centering
        \includegraphics[width=0.74\textwidth]{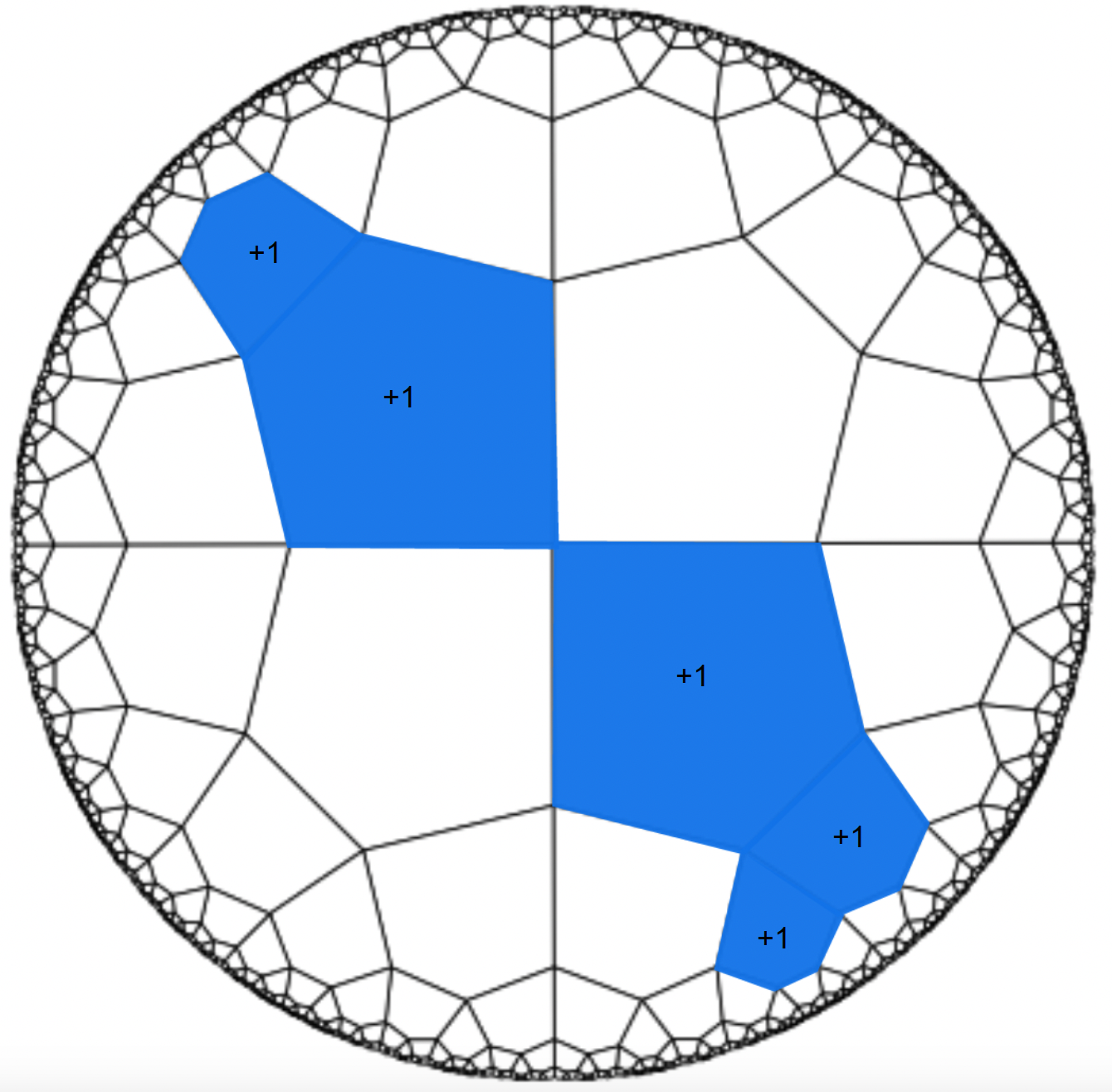} 
       \caption{Example of two corresponding clusters $C_1, C_2 \subset C(\sigma)$ for the configuration $\sigma$ depicted in Figure \ref{fig:Cl}.}\label{fig:Cl2}
    \end{minipage}\hfill
\end{figure}

Moreover, given a configuration $\sigma \in \mathcal{X}$ we denote by $\gamma(\sigma)$ its {\it Peierls contour}, that is the boundary of the clusters of pluses. 
Note that Peierls contours live on the dual lattice and are the union of 
piecewise linear curves separating spins with opposite sign in $\sigma$, see Figure \ref{fig:Peierls}. 

\begin{figure}[!htb]
\centering
        \includegraphics[width=0.4\textwidth]{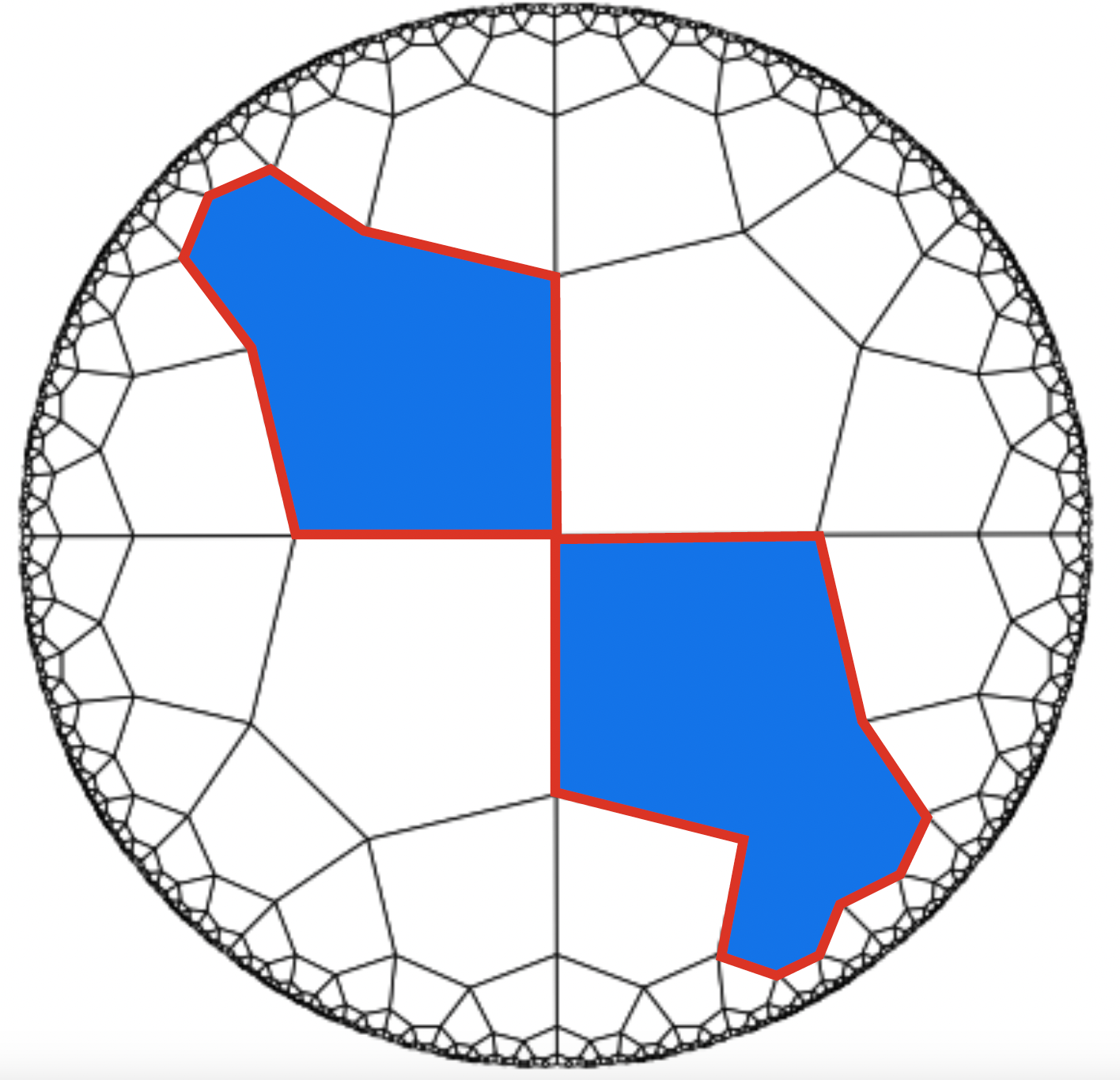}  
        \caption{An example of two Peierls contours in red for the configuration in Figure \ref{fig:Cl}. We have two polyamonds $P_1$ and $P_2$ with areas $|P_1|=2$ resp. $|P_2|=3$.}\label{fig:Peierls} 
\end{figure}

Since the gons of a cluster live naturally on the dual lattice $\mathcal{L}_{q,p}$, it is beneficial to associate clusters to geometrical shapes obtained by joining the $q$-regular gons along their edges (\emph{polyamonds}). In this way it will be possible to characterize the configurations relevant for the dynamics in terms of the area and the perimeter of the polyamonds associated to their clusters. 
Though we will consider polyamonds to study the Ising model on $\mathcal{L}_{p,q}$, the properties that we will derive may be of use to study other statistical mechanics lattice models for which the notion of clusters may be linked to that of polyamonds.

\begin{definition}\label{def:polyamond}
    A \emph{polyamond} $P \subset \mathbb{R}^2$ is a 
    finite maximally edge-connected union of $q$-gons of the lattice $\mathcal{L}_{q,p}$.
    Each gon belonging to the polyamond is called \emph{face} whereas
    the gon of $\mathcal{L}_{q,p}$ outside of $P$ are called \emph{empty face}.
\end{definition}

We remark that two faces are not connected if they
share a single point. For an example see Figure \ref{fig:Peierls}.

\begin{definition}
The \emph{area} of a polyamond $P$ is the number of its faces. We denote it by $|P|$. 
\end{definition}
\begin{definition}\label{def:perimeter_of_polyamond}
The \emph{boundary} of a polyamond $P$ is the collection of unit edges of the dual lattice $\mathcal{L}_{q,p}$ such that each edge separates a face belonging to $P$ from an empty face. The \emph{edge-perimeter} $p(P)$ of a polyamond $P$ is the cardinality of its boundary. 
\end{definition}
In other words the perimeter is given by the number of interfaces on the discrete dual lattice between the sites inside the polyamond and those outside.
If not specified differently, we will refer to the edge-perimeter simply as perimeter. 

Note that with this construction there is a bijection between clusters 
of plus spins and polyamonds. Analogously, minus spins are associated to the empty faces of the dual lattice $\mathcal{L}_{q,p}$. Thus, the number of pluses in a positive cluster corresponds to the area of the associated polyamond $P$, and its contour $\gamma$ is equivalent to the perimeter of $P$.

In the following, we define certain specific sets of vertices, which are useful in the proofs for determining the minimum-energy sets and characterizing the shapes of clusters in critical configurations. 

\begin{definition}\label{def:bmax}
Let $A \subset \mathcal{V}_{p,q}$ be any connected set of vertices (order them lexicographically $\preceq$), and set $|A|=n$. We define $B_{A, max}$ the largest ball contained in $A$ as follows. If the vertices in $A$ do not form a polygon (or a tile) then set $B_{A,max}=\emptyset$. Otherwise, there exist $M \leq n-p$ layers $L_1(x_i)$ in $A$ with middle points $x_1,\ldots,x_M$. Let
\[
\{\overline{x}, \overline{m}\} := \underset{l\in \mathbb{N}}{arg\,max} \, \underset{\{x_1,\ldots,x_M\}}{arg\, max} \left\{\bigcup_{k=0}^{l} L_k(x_i): \bigcup_{k=0}^{l} L_k(x_i) \subset A \right\}.
\]
Then $B_{A, max}$ is defined as
\[
B_{A, max}(\overline{x}) :=  \bigcup_{k=0}^{\overline{m}} L_k(\overline{x}).
\]
\end{definition}
Note that this ball is not uniquely defined. In the case that several sets $B_{A,max}(\overline{x})$ can be constructed in this way we take the ball with the smallest reference point in lexicographic order.

\begin{definition}\label{def:bmin}
Given $\overline{x}$ as in Definition \ref{def:bmax}, 
let 
\[
\overline{M}:= \underset{l\in \mathbb{N}}{arg\, min} \left \{  \bigcup_{k=0}^{l} L_k(\overline{x}): \bigcup_{k=0}^{l} L_k(\overline{x}) \supset A\right \}.
\]
We define the minimal ball containing $A$, by
\[
B_{A,Min}(\overline{x}):=  \bigcup_{k=0}^{\overline{M}} L_k(\overline{x}).
\]
\end{definition}

Furthermore, given a set of vertices $A$, we characterize the vertices $v$ in the layers of $B_{A,Min}(\overline{x})\setminus B_{A,max}(\overline{x})$ as empty if $v\notin A$ and occupied if $v\in A$. A sequence of consecutive empty, resp.~occupied, vertices in the same layer is called a \textit{strip}.

Finally, for a fixed $n \in \mathbb{N}$ and a connected set $A\subset \mathcal{V}_{p,q}$ such that $|A|=n$ we consider the set of shapes which have minimal perimeter $\mathcal{M}_N$, as proven in \cite{d2025minimal}. 

\begin{definition}\label{def:minset}
Fix $n\in \mathbb{N}$ and let $A$ be a connected set $A\subset \mathcal{V}_{p,q}$ such that $|A|=n$. We call 
\[
\mathcal{N}_e=(B_{A,Min}(\overline{x})\setminus B_{A,max}(\overline{x}))\cap A^c, \, \, \text{ resp. } \mathcal{N}_o=(B_{A,Min}(\overline{x})\setminus B_{A,max}(\overline{x}))\cap A
\]
the set of empty, resp. occupied vertices, in $B_{A,Min}(\overline{x})\setminus B_{A,max}(\overline{x})$. Define a strip $S$ of length $|\mathcal{N}_o|$ in some layer $L_K$ for $K$ large enough. Denote by $o_{max}$ the maximal possible number of vertices $v \in S$ which are  also in $I_{K;p,q}$ for $p\geq 4$. Moreover let $s_e$ denote the number of empty strips in the layers of $B_{A,Min}(\overline{x})\setminus B_{A,max}(\overline{x})$.
Then $A\in \mathcal{M}_n$ if it satisfies one of the following conditions:
\begin{itemize}
\item[(C1)] $s_e=0$ and $B_{A,Min}(\overline{x}) = B_{A,max}(\overline{x})$.
\item[(C2)] $s_e\geq 1$ and the set $\mathcal{N}_o$ contains precisely $o_{max}+(s_e-1)$ vertices $v$ such that $v\in \bigcup_{r=\overline{m}+1}^{\overline{M}} I_{r;p,q}$. 
\end{itemize}
\end{definition}

We present an example taken from \cite{d2025minimal} in Figure \ref{fig:balls}.

\begin{figure}[!hbtp]
\centering
\includegraphics[scale=0.25]{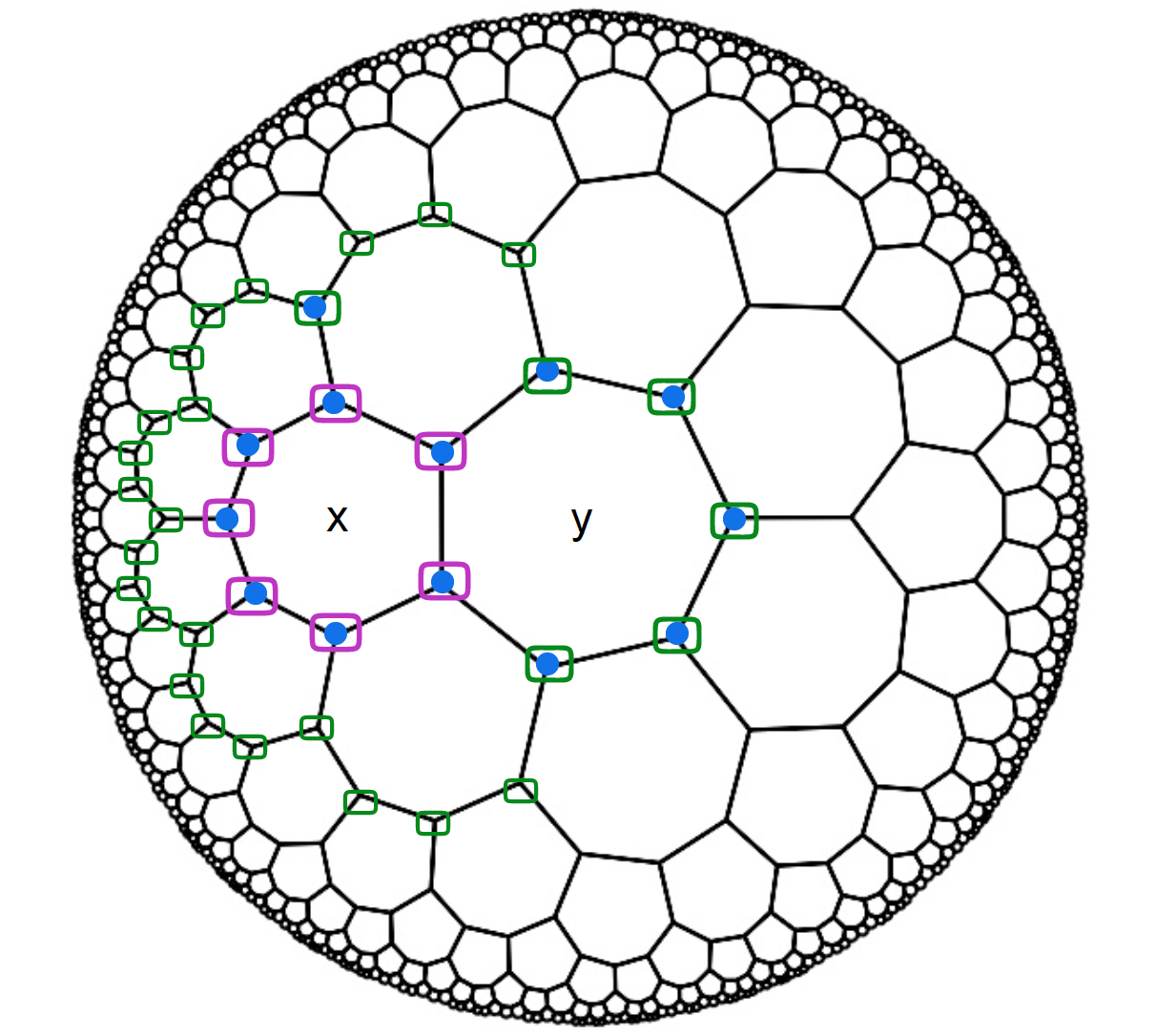}
\caption{Example of the set of connected points $A$, displayed as \textcolor{blue}{blue} points. There are two tiles present in $A$. One is centered at $x$ and one at $y$ with $x\preceq y$. The ball $B_{A,max}(x)$ is displayed by the \textcolor{magenta}{pink} circles and and the layer $B_{A,Min}(x)\setminus B_{A,max}(x)$ by the \textcolor{green}{green} circles.}\label{fig:balls}
\end{figure}

\begin{definition}\label{def:regular}
We call a polyamond \emph{standard} if its faces are centered at the vertices of a set that satisfies Definition \ref{def:minset}. 
Moreover,
a standard polyamond is \emph{regular} if its faces are centered at the vertices of the ball $B_{M;p,q}$. The total number $M$ of layers ($L_0,L_1,...,L_{M-1}$) composing the regular polyamond is called \emph{radius}.
\end{definition}
We note that a regular polyamond exists only for some value of the area $n\in\mathbb{N}$.

\begin{definition}\label{def:crit_conf}
Let $\mathscr{C}(r^*,k)$ be a standard polyamond composed of $r^*$ layers ($L_0,...,L_{r^*-1}$) and other $k$ faces attached along $L_{r^*}$, where $k\in \left \{1,...,|L_{r^*}| \right \}$. 
Among all configurations containing a single cluster of plus spins, differing with respect to $k$, let $\sigma$ be the one with maximal energy. We denote this configuration by 
$\mathcal{B}(r^*)$. 
\end{definition}
It can be observed that the number of pluses in $\mathcal{B}(r^*)$ coincides with $A^*$, where $A^*$ is given in Definition \ref{critical_radius}.

Each of these geometrical definitions and properties can be extended from polyamonds to clusters. So, for example, when we call a cluster \emph{regular cluster}, our meaning is that the cluster has the shape and the properties of a regular polyamond.

In this setting, it is immediate to see that, for each configuration $\sigma$, 
\begin{align}\label{eq:peierls_hamiltonian}
    \Delta H(\sigma)=|\gamma(\sigma)|-h N^{+}(\sigma),
\end{align}
where
\begin{align}\label{eq:number_of_pluses}
    N^{+}(\sigma)=\sum_{x \in \Lambda}\frac{\sigma(x)+1}{2},
\end{align}
represents the number of plus spins.
In this way the energy of each configuration is associated to the
area and the perimeter of a collection of associated polyamonds.

\section{Proofs of main results}\label{proofs}

We will first prove the recurrence property, then the identification of metastable states and all other theorems will follow from those.

\subsection{Proof of Proposition~\ref{teoRP}}\label{sec:recurrence}
Assume that $h \in (h^*_1,h^*_2)$.

We will prove that each configuration in  $\mathcal{X} \setminus \{\textbf{-1}, \textbf{+1}\}$ has a stability level smaller or equal to $K^*=\max \{2(h-q+4), K^*(h)\}$, where $K^*(h)$ is a function of $h,p,q$ that will be defined precisely in \eqref{stabR1}. With this result, we may apply \cite[Theorem 3.1]{MNOS04} to derive the claim of Proposition~\ref{teoRP}. 

The strategy of the proof is as follows. In \textbf{Step 1},  we will prove that the stability level of a configuration $\sigma \in \mathcal{X} \setminus \{\textbf{-1}, \textbf{+1}\}$ containing at least a regular cluster, is smaller than $K^*(h)$. Then, in \textbf{Step 2}, we will show that the configurations that contains at least a cluster different from a regular cluster, has a stability level smaller than $2(h-q+4)$.

\subsubsection*{Step 1: Recurrence for balls.}

We consider the set of configurations $\mathscr{R} \subset \mathcal{X}$ that contain only regular clusters of pluses. We partition $\mathscr{R} \setminus \{\textbf{+1}^{(-)}\} := R_1 \cup R_2$ in two sets:
\begin{itemize}
\item $R_1$ is composed by the configurations that contain only regular clusters of pluses with radius $n>r^*$, 
\item $R_2$ is composed by the configurations that contain only regular clusters of pluses with at least one of them with radius $1 \leq n \leq r^*$, 
\end{itemize}

We will see that regular clusters in $R_1$ have the tendency to grow with high probability, whereas regular clusters in $R_2$ have the tendency to shrink.

\paragraph{Analysis of the configurations in $R_1$.}  Given a configuration $\sigma$ in $R_1$, we consider one of the regular clusters of pluses $C^+$ with radius $n > r^*$.
    
    We construct a path $\omega\in \Theta(\sigma,\mathcal{I}_{\sigma} \cap (R_1 \cup \{\textbf{+1}^{(-)}\}))$ that
    adds an external layer of pluses to the configuration. In particular, we flip all the minuses in $L_{n+1}$.
    Starting from $\sigma\equiv \omega_0\in R_1$, we will define $\omega_1$ as follows. 
    Following the order that maximizes the number of connected vertices $I_{n+1;p,q}$, we denote by $j_1,\ldots,j_r$ the sites belonging to the layer $L_{n+1}$ with $r=|L_{n+1}|$. Thus, the first vertices are such that $j_1 \in I_{n+1;p,q}$, $j_2,\ldots,j_{p-2} \in E_{n+1;p,q}$ and $j_{p-1}\in I_{n+1;p,q}$. 
   
    Consider the first minus in $j_1$ and flip it into a plus, i.e., $\omega_1:=\omega_0^{(j_1)}$. Define $\omega_k:=\omega_{k-1}^{(j_k)}$, for $k=2,\ldots,r$. By a direct computation, we obtain for $k=2,\ldots,r-1$
    \begin{align}
        &H(\omega_1)=H(\omega_0)-h+q-2, \notag \\
        &H(\omega_{k})=
        \begin{cases}
            H(\omega_{k-1})-h+q-2 &\qquad \text{ if } j_k\in E_{n+1;p,q}  \notag \\
            H(\omega_{k-1})-h+q-4 &\qquad \text{ if } j_k\in I_{n+1;p,q} 
        \end{cases} \notag \\
        &H(\omega_r)=H(\omega_{r-1})-h+q-4.
    \end{align}
    Then, by noting that $j_1\in I_{n+1;p,q}$ and $j_r \in E_{n+1;p,q}$, we have
    \begin{align}\label{energy_omegar}
H(\omega_r)&=H(\omega_0)+(-h+q-4)|I_{n+1;p,q}|+(-h+q-2)|E_{n+1;p,q}| \notag \\
&=H(\omega_0)+(q-2-h)|L_{n+1}|-2|I_{n+1;p,q}|<H(\omega_0),
    \end{align}
where the last inequality follows from the assumption that $n > r^*$. Indeed
\begin{align}
    |L_{n+1}|(q-2-h)&-2|I_{n+1;p,q}|
    =c_{p,q} (a_- \lambda_-^{n+1} +a_+ \lambda_+^{n+1})(q-2-h)-4 c_{p,q} \sqrt{q-2} (\lambda_+^{n+1}-\lambda_-^{n+1}) \notag \\
    &=c_{p,q}\lambda_-^{n+1} \left [ \left (a_-+a_+ \frac{\lambda_+^{n+1}}{\lambda_-^{n+1}} \right) (q-2-h)-4 \sqrt{q-2} \left (\frac{\lambda_+^{n+1}}{\lambda_-^{n+1}} -1 \right )\right ] \notag \\
    &=c_{p,q}\lambda_-^{n+1} \left [ \left (a_+(q-2-h)-4 \sqrt{q-2} \right )\frac{\lambda_+^{n+1}}{\lambda_-^{n+1}}  +4 \sqrt{q-2} +a_-(q-2-h)\right ],
\end{align}
and the coefficient of the ratio is negative for $h> h^*_1$.
Thus, we have that $H(\omega_r)<H(\omega_0)$ if and only if
\begin{align}
    \frac{\lambda_+^{n+1}}{\lambda_-^{n+1}}>\frac{4 \sqrt{q-2}+a_- (q-2-h)}{4 \sqrt{q-2}-a_+(q-2-h)}
\end{align}
and this inequality is equivalent to require $n > r^*$ for $n \neq N$ where $N$ denoted the last layer of the considered lattice $\Lambda$. 

In order to compute $V_{\sigma}$ we note that 
Since $q\geq 4$, by construction two vertices in $I_{n+1;p,q}$ are separated by $p-3$ or $p-4$ vertices in $E_{n+1;p,q}$. In particular, we define the path $\omega$ that follows a clockwise order and flips the minus spins adjacent to the cluster of pluses starting from a vertex in $I_{n+1;p,q}$, which is separated from the next vertex in $I_{n+1;p,q}$ by $p-4$ vertices in $E_{n+1;p,q}$. Furthermore, the following $q-2$ pairs of vertices in $I_{n+1;p,q}$ are separated by $p-4$ vertices in $E_{n+1;p,q}$. Then, we have another pair of vertices in $I_{n+1;p,q}$ separated by $p-3$ vertices in $E_{n+1;p,q}$.

\begin{align}\label{schema}
    \underbrace{I \,\,\,\,\,\,\, (p-4) E \,\,\,\,\,\,\, } I \underbrace{\,\,\,\,\,\,\, (p-3) E \,\,\,\,\,\,\, I \,\,\,\,\,\,\, (p-3) E \,\,\,\,\,\,\, I \,\,\,\,\,\,\, (p-3) E \,\,\,\,\,\,\, I \,\,\,\,\,\,\, ... \,\,\,\,\,\,\,\,}_{q-4} \,I\underbrace{\,\,\,\,\,\,\, (p-4) E \,\,\,\,\,\,\, I}
\end{align}

Thus, the first two pluses in $I_{n+1;p,q}$ that we flip are divided by $p-4$ vertices in $E_{n+1;p,q}$. We obtain
\begin{align}
    &H(\omega_1)=H(\omega_0)+q-2-h, \notag \\
    &H(\omega_{i+1})=H(\omega_0)+q-2-h+i (q-2-h) &&\qquad\text{ for } i=1,...,p-4. \notag \\
    &H(\omega_{p-2})=H(\omega_0)+(p-3)(q-2-h)+q-4-h \notag
\end{align}

Note that $H(\omega_{k})-H(\omega_{0})>0$ for each $k=1,...,p-2$  since $h< h^*_2<q-2-\frac{2}{p-2}$. 
In particular 
\[
\max_{k=0,...,p-2} H(\sigma_k)=H(\sigma_{p-3}).
\]
We distinguish two cases: 
\begin{itemize}
    \item[(i)] $q>4$. In this case,
the second vertex of $I_{n+1;p,q}$, i.e. $j_{p-1}$, is separated from the third one, $j_{2p-5}$, by $p-3$ vertices in $E_{n+1;p,q}$, then
\begin{align}
    &H(\omega_{(p-2)+i})=H(\omega_{p-2})+i (q-2-h) &&\qquad\text{ for } i=1,...,p-3, \notag \\
    &H(\omega_{2(p-2)})=H(\omega_{p-2})+(p-3)(q-2-h)+q-4-h \notag 
\end{align}
In this case we note that $H(\omega_{k})-H(\omega_{p-2})>0$ for each $k=(p-2)+1,...,2(p-2)$ since $h< h^*_2<q-2-\frac{2}{p-2}$. In particular,
\[
\min_{k=0,...,2(p-2)} H(\omega_k)=H(\omega_0)
\]
and
\[
\max_{k=p-1,...,2(p-2)} H(\omega_k)=H(\omega_{2p-5}).
\]
We compare the two maxima and we find that $H(\omega_{2p-5})>H(\omega_{p-3})$.
See Figure \ref{fig:examples} for a schematic representation of the energy landscape for $p,q=5$ and two different values of $h$.
Then we iterate this procedure $q-5$ more times, for a total of $q-4$ times as represented in the representation \eqref{schema}. 
After performing the preceding iterations, the vertices in $E_{n;p,q}$ that lie between two consecutive vertices of $I_{n;p,q}$ are 
$p-4$. Subsequently, they revert to $p-3$. It follows that, when considering all the values of the maxima, they do not increase or decrease monotonically, not even when restricting to those appearing along the path segments that recur recursively (see Appendix \ref{sec:appendix} for two examples). The value of the maximum depends on the chosen parameter 
$h$. A similar phenomenon is observed in \cite{apollonio2022metastability}, but in that case the maximum oscillates between only two values determined by $h$, whereas here multiple values occur.
Thus, let $\omega_{\tilde h}$ be the configuration with maximal energy along this path, then 
\begin{align}\label{stabR1}
    V_\sigma&=H(\omega_{\tilde h})-H(\omega_{0}):=K^*(h).
\end{align}

\item[(ii)] $q=4$. In this case, we proceed in a similar manner to the previous one, but taking into account that each pair of vertices in $I_{n+1;p.q}$ is separated by $p-3$ vertices from $E_{n+1,p,q}$.
\end{itemize}

\paragraph{Analysis of the configurations in $R_2$.} Given a configuration $\sigma$ in $R_2$, we consider one of the regular cluster of pluses $C^+$ with radius $n \leq r^*$. By Definition \ref{def:regular}, we have that $C^+=\bigcup_{k=0}^{n-1} L_k$. First we assume $n\geq 3$ and at the end  we analyze the simple case $n\leq 2$.
    We construct a path $\omega\in \Theta(\sigma,\mathcal{I}_{\sigma} \cap R_2 )$ that
    dismantles the external layer. In particular, we flip all pluses in $L_n$ by following the time-reversal of the path described in the previous paragraph for a configuration in $R_1$. Thus, let $j_1,...,j_r$ be the vertices in $L_n$ ordered in the reverse order compared to the previous case, so $j_1,...,j_{p-4} \in E_{n;p,q}$, $j_{p-3}\in I_{n;p,q}$ and so on. We define $\omega_k:=\omega_{k-1}^{(j_k)}$, for $k=1,...,r$. By a direct computation, we obtain for $k=2,...r-1$
    \begin{align}
        &H(\omega_1)=H(\omega_0)+h+(-(q-2)+2), \notag \\
        &H(\omega_{k})=
        \begin{cases}
            H(\omega_{k-1})+h+(-(q-1)+1) &\qquad \text{ if } j_k\in E_{n;p,q}  \notag \\
            H(\omega_{k-1})+h+(-(q-2)+2) &\qquad \text{ if } j_k\in I_{n;p,q}. 
        \end{cases} \notag \\
        &H(\omega_r)=H(\omega_{r-1})+h+(-(q-1)+1)
    \end{align}
    Then, noting that $j_1\in E_{n;p,q}$ and $j_r\in I_{n;p,q}$, we compute
    \begin{align}\label{omegar_omega0_shrink}
H(\omega_r)&=H(\omega_0)+|L_n|h+(-(q-2)+2)|I_{n;p,q}|+(-(q-1)+1)|E_{n;p,q}| \notag \\
&=H(\omega_0)-|L_n|(q-2-h)+2|I_{n;p,q}|<H(\omega_0),
    \end{align}
where the last inequality follows from the assumption on $n \leq r^*$ and $h>h^*_1$. 
By using the time-reversal of the path described in the previous case, we compute $V_{\sigma}$ and we find 
\begin{align}\label{stab_R2}
V_\sigma=H(\omega_1)-H(\omega_{\tilde h})=K^*(h).
\end{align}

Denote by 
\[
\mathcal{X}^+_{r,k} = \{\sigma \in \mathcal{X} \, | \, \sigma_{B_{r;p,q}}\equiv +1, \text{ and } \exists v_1,\ldots,v_k \in L_{r} \text{ such that } \sigma(v_i)=+1 \text{ for all } i \}.
\]
the space of configurations with all pluses on a ball of radius $r$ and in precisely $k$ vertices in the subsequent layer. Fix $h \in (h^*_1, h^*_2)$ and set
$r=r^*$. Then the critical droplet is defined as, 
\begin{equation}\label{eq:crit_drop}
\mathcal{B}_{r^*;p,q}(o)\cup S_{crit}(h) = \left \{\text{supp}(\sigma)\cap\{i:\sigma(i)=+1\} \, | \, \sigma \in \underset{\sigma\in \mathcal{X}^+_{r^*,k}, \, \, \\k=1,\ldots,|L_{r^*}|}{argmax} H(\sigma) \right\}.
\end{equation}
In other words when $r=r^*$, the configuration $\omega_{\tilde h}$ contains the critical droplet, i.e. $\omega_{\tilde h} \equiv \mathcal{B}(r^*)$, see Definition \ref{def:minset} with $s_e\neq 0$ and Definition \ref{def:crit_conf}.

To conclude, consider now $n=2$. We construct a path $\omega\in \Theta(\sigma,\mathcal{I}_{\sigma} \cap R_2 )$ that dismantles the layer $L_1$. In particular, we note that by construction all the vertices in $I_{1;p,q}$ are separated by $p-3$ vertices in $E_{1;p,q}$. Thus, we flip all the pluses in $L_1$ starting from the vertices in $E_{1;p,q}$ as follows. Let $j_1,...,j_{|L_1|}$ be the ordered sites in $L_1$ such that $j_1,...,j_{p-3}\in E_{1;p,q}$. We define $\omega_k:=\omega_{k-1}^{(j_k)}$, for $k=1,...,r$. By a direct computation, we obtain for $k=2,...r-1$
    \begin{align}
        &H(\omega_1)=H(\omega_0)+h+(-(q-2)+2), \notag \\
        &H(\omega_{k})=
        \begin{cases}
            H(\omega_{k-1})+h+(-(q-1)+1) &\qquad \text{ if } j_k\in E_{1;p,q}  \notag \\
            H(\omega_{k-1})+h+(-(q-2)+2) &\qquad \text{ if } j_k\in I_{1;p,q}. 
        \end{cases} \notag \\
        &H(\omega_r)=H(\omega_{r-1})+h+(-(q-1)+1)
    \end{align}
Thus,
    \begin{align}
H(\omega_r)&=H(\omega_0)+|L_1|h+(-(q-2)+2)|I_{1;p,q}|+(-(q-1)+1)|E_{1;p,q}| \notag \\
&=H(\omega_0)-|L_1|(q-2-h)+2|I_{1;p,q}|<H(\omega_0).
    \end{align}
We argue as above and we find $V_\sigma \leq h-q+4$. 

In case $n=1$, the cluster of pluses is composed only by $|L_0|=p$ pluses that we flip into minuses. In particular, let $j_1,...,j_p$ be the ordered sites in $L_0$ and we define $\omega_k:=\omega_{k-1}^{(j_k)}$, for $k=1,...,p$. 
By a direct computation, we obtain
    \begin{align}
        &H(\omega_1)=H(\omega_0)+h+(-(q-2)+2), \notag \\
        &H(\omega_{k})=
            H(\omega_{k-1})+h+(-(q-1)+1) \qquad \text{ for } k=2,...,p-1. \notag \\
        &H(\omega_p)=H(\omega_{p-1})+h-q
    \end{align}
Thus,
\begin{align}
    H(\omega_p)=H(\omega_0)+p(h-q+2)+2<H(\omega_0)
\end{align}
since $h<h^*_2 <q-2-2/p$. Moreover $V_\sigma \leq h-q+4$ since the rest of the path is downhill.

In conclusion, by using the last computation and Equations \eqref{stabR1} and \eqref{stab_R2}, we have that the stability level of $\mathscr{R}$ is $V_{\mathscr{R}} \leq  K^*(h)$.

\subsubsection*{Step 2: Recurrence for other configurations.}
In this section, we will prove that a configuration $\sigma \not \in \{\textbf{-1},\textbf{+1}\}$, containing at least a cluster of pluses $C$ different from a regular cluster, has a stability level not greater than $2(h-q+4)$. To do this, we create a path $\omega$ from $\sigma$ to $\eta$ such that 
\begin{align}
    H(\eta)<H(\sigma) \qquad \text{ and } \qquad \Phi (\sigma,\eta)-H(\sigma) \leq 2(h-q+4).
\end{align}

We construct $\omega=(\sigma_0,...,\sigma_n)$ such that $\sigma_0:=\sigma$ and $\sigma_n:=\eta$ in the following way. 
Recalling Definition \ref{def:bmin}, consider the minimal union of layers $B_{C,Min}$ circumscribing the cluster $C$. We observe that there is at least a minus spin in $B_{A,Min}$, otherwise $C$ is a regular cluster and this is a contradiction. We also note that $B_{C,Min}$ may contain some plus spins belonging to other clusters. We analyze the values of the spins in $B_{C,Min}$ and we distinguish two cases:
\begin{itemize}
    \item[(1)] There exists a vertex $v \in B_{C,Min}$ such that $\sigma_0(v)=+1$ is connected with at most a plus of the cluster $C$.
    \item[(2)] All pluses in $B_{C,Min}$ are connected with at least other two pluses of $C$.
\end{itemize}

We start with case (1). Consider the configuration $\sigma_1$ obtained starting from $\sigma_0$ by flipping the plus in $v$ into minus, i.e. $(\sigma_1)_{\Lambda \setminus \{v\}}=(\sigma_0)_{\Lambda \setminus \{v\}}$ and $\sigma_1(v) \neq \sigma_0(v)$. Thus, $H(\sigma_1) \leq H(\sigma_0)+h-q+2$ by assumption on $v$. Since $h<h^*_2<q-2-\frac{2}{p-2}$, we conclude $H(\sigma_1)<H(\sigma_0)$ and $V_{\sigma_0}=0$.

Next, we study case (2). Let $L_k \subset B_{C,Min}$ be the smallest layer containing a minus spin, $k\leq \overline{M}$ where $\overline{M}$ is the number of layers in $B_{C,Min}$. We note that this spin exists, otherwise the configuration contains a regular cluster. We distinguish three cases:
\begin{itemize}
\item[(2.a)] If $k > r^*$, then we construct a path as in case of configurations of  $R_1$ by flipping the minuses in $L_{\overline{M}}$. The presence of the plus spins in the outer layer promotes the tendency of the system to follow the described path and $V_{\sigma} \leq h-q+4$. 
\item[(2.b)] If $k \leq r^*$ and $k=\overline{M}$, then we construct a path as in the case treating configurations in $R_2$ by flipping the pluses in $L_{\overline{M}}$. In this case, the presence of the minuses in $L_{\overline{M}}$ favors the evolution of the system along the described path. Thus, for each value of $M$, $V_{\sigma} \leq h-q+4$.
\item[(2.c)] If $k \leq r^*$ and $k<\overline{M}$.

First, we suppose that the vertices in $L_{k+1}\cup L_{k-1}$ connected with a minus in $L_k$ have plus spin. (We note that these pluses are not necessary in $C$, they might belong to another cluster of pluses.)
In this case, we construct $\sigma_1$ starting from $\sigma_0$ by flipping this minus. By a direct computation, we obtain $H(\sigma_1)\leq H(\sigma_0)-h-q+4<H(\sigma_0)$ since $h>0$ and $q \geq 4$. Thus, we can conclude $V_{\sigma_0}=0$.

In the other case, we consider the minus strips in $B_{C,Min}$. 
Let $s_1,...,s_\ell$ be these strips, for some $\ell \in \mathbb{N}$. We note that 
$\ell \not =0$, otherwise $C$ is a regular cluster that is a contradiction. 
We subdivide further into two cases:
\begin{itemize}
\item[(i)] There exists at least a plus strips $s \in L_{\overline{M}}$ with length $|s| \leq |L_{r^*}|$. In this case we construct a path by flipping the plus spins in $s$ as in case of $R_2$. Thus, also in this case we obtain $V_\sigma \leq h-q+4$.
\item[(ii)] All the plus strips in $L_M$ have length greater than $|L_{r^*}|$. We denote by $s$ one of these strips and we construct a path $\omega$ by flipping some pluses in $s$ and simultaneously by swapping some minuses in $L_k$. In the following, we describe $\omega$ in detail. Let $s'$ be one of the minus strips in $L_k$. We observe that $|s'| \leq |L_{r^*}|$ since $k \leq r^*$. Thus, $|s'|<|s|$ and we define the path $\omega$ by flipping $|s'|$ pluses of $s$ and by flipping all the minuses of $s'$. We call $\eta$ the last configuration and we obtain
\begin{align}
    H(\eta)=H(\sigma)+h |s'| - n_I^s (q-4) - n_E^s (q-2)-h |s'|+n_I^{s'} (q-4) + n_E^{s'} (q-2)
\end{align}
where $n_I^s$ (resp. $n_E^{s}$) is the number of plus spins in $s \cap I_{M;p,q}$ (resp. $s \cap E_{M;p,q}$) that we flip into minus, and $n_I^{s'}$ (resp. $n_E^{s'}$) is the total number of minus spins in $s' \cap I_{k;p,q}$ (resp. $s' \cap E_{k;p,q}$). Since by construction $n_I^s \in \{n_I^{s'},n_I^{s'}+1\}$ and $|s'|=n_I^s+n_E^s=n_I^{s'}+n_E^{s'}$, then we have $H(\eta) \leq H(\sigma)$ and $V_\sigma \leq 2(h-q+4)$ by arguing as in case $R_1$ and $R_2$ and estimating with the upper value of the stability level along these two paths. Moreover, if $H(\eta) < H(\sigma)$ we conclude. Otherwise, we analyze the configuration $\eta$, in particular we focus on the remaining pluses in $s$. If there exists a plus spin that satisfies the assumption of (1), we conclude as above; otherwise we iterate this step (ii) by considering another strip of minuses in $L_k$ (if there exists) or in $L_{k+1}$ if $L_k$ contains all pluses. We conclude by noting that when we flip the last pluses in $s$ the energy decreases. Thus, in all of these cases $V_\sigma \leq 2(h-q+4)$.

\end{itemize}

\end{itemize}

\begin{remark}
We note that $K^*(h) \geq (p-3)(q-2-h)$, see the first part of the path in \textbf{Step 1}. By assumption on $h$, we have $h<h_2^*<q-2-2/(p-2)$, thus
\begin{align}
    K^*(h) \geq (p-3)(q-2-h) >h-q-4.
\end{align}
We may conclude that $K^*=K^*(h)$.
\end{remark}

\subsection{Proof of Theorem \ref{Identification}}\label{bound}

In this section, we prove that the stability level of $\textbf{-1}$ is $\Gamma^{p,q}$ and, since $\Gamma^{p,q}>K^*$, then the homogeneous state $\textbf{-1}$ is a metastable state.
We will prove in two steps, that
\begin{equation}
 \Phi(\textbf{-1},\textbf{+1})-H(\textbf{-1})=\Gamma^{p,q}.
\end{equation}

First, we create a \emph{reference path} and estimate the upper bound for the maximal stability level, in a second step we find the lower bound.

The claim of the theorem can be derived from the previous estimate and the following argument.

The first assumption of \cite[Theorem 2.4]{CNrelax13} is satisfied for the choice of $A=\{\textbf{-1}\}$ and $a=\Gamma^{p,q}$. The second assumption of \cite[Theorem 2.4]{CNrelax13} is satisfied (see Section \ref{sec:recurrence}), since either $\mathcal{X}\setminus \{\textbf{-1}, \textbf{+1} \}=\emptyset$ or
$V_\sigma<\Gamma^{p,q}$ for all $\sigma\in \mathcal{X}\setminus \{\textbf{-1}, \textbf{+1} \}$.
Thus, we can conclude by applying \cite[Theorem 2.4]{CNrelax13}.

\subsubsection*{Step 1: Upper bound}\label{referencepath}
In this Section we define the \emph{reference path} $\omega^\mathrm{r}$ as a sequence of configurations from $\textbf{-1}$ to $\textbf{+1}$ that are increasing clusters \emph{as close as possible to regular shape}. 
We first describe intuitively this path: starting from $\textbf{-1}$ we flip one of the $p$ nearest spins at the origin and then by following the clockwise order we flip the other $p-1$ minuses close to the origin. After that, the configuration is a regular cluster composed of only one layer $L_0$. Then, we flip the minuses in $L_1$ starting from the spin in a vertex of $I_{1;p,q}$. In this way we obtain a regular cluster with radius $2$.
We iterate this procedure until the regular cluster of pluses fills all $\Lambda$, i.e. it reaches the configuration $\textbf{+1}$. 

In the following we describe the reference path in more details and we prove that along this path $\Phi(\omega^\mathrm{r})-H(\textbf{-1})=\Gamma^{p,q}$. This implies that
\begin{equation}
\label{upper}
\Phi(\textbf{-1},\textbf{+1})-H(\textbf{-1})\leq \Gamma^{p,q}.
\end{equation}

We denote by $\sigma_{n}$ the configuration that contains 
a regular cluster with radius $n$, i.e. the configuration containing a regular cluster with $n$ layers of pluses centered in the origin. 

We start by choosing one of the $p$ minuses close to the origin. We flip it into plus with an energy cost equal to $q-h$.
Then, following the clockwise order, we flip the other remaining $p-1$ minuses, one after the other,
with an energy cost equal to $q-2-h$ for the first $p-2$ steps and equal to $q-4-h$ for the last step.
Thus the total energy cost to construct $\sigma_{1}$, i.e. to form a regular cluster with only one layer, is $p(q-2-h)$.

Next, we consider a minus in $I_{1;p,q}$ and we flip it into plus. Then, starting from this plus and following the clockwise order we flip all the minuses in $L_1$ and we obtain $\sigma_2$.
We grow up this regular cluster by flipping the minuses in $L_2$ starting from a minus in $I_{2;p,q}$ and following the clockwise order. In this way we construct $\sigma_3$. 

This growth mechanism can be iterated, 
until the regular cluster invades $\Lambda$ and the configuration $\textbf{+1}$ is reached.

In order to compute the height of this path, we first evaluate 
the height of the portion of the path connecting $\textbf{-1}$ to a general
configuration $\sigma_n$. We have 
\begin{align}
  H(\sigma_n)& =H(\textbf{-1})-h\sum_{k=0}^{n-1}|L_k|+|I_{n:p,q}|.
\end{align}
Thus, we compare the energy of $\sigma_n$ with the energy of $\sigma_{n+1}$ and we obtain
\begin{align}
H(\sigma_{n+1})- H(\sigma_n)&=|I_{n+1:p,q}|-|I_{n;p,q}|-h|L_{n}| \notag \\
&=(q-3)|I_{n;p,q}|+(q-2)|E_{n;p,q}|-|I_{n;p,q}|-h|L_{n}| \notag \\
&=(q-2-h)|L_{n}|-2|I_{n;p,q}| \notag \\
&=c_{p,q} (a_- \lambda_-^{n} + a_+ \lambda_+^{n})(q-2-h)-4c_{p,q} \sqrt{q-2} (\lambda_+^{n}-\lambda_-^{n}).
\end{align}
Recalling the definition of critical radius Definition \ref{critical_radius}, we note that this function is increasing in $n$ if $n \in [0, r^*]$ and it is decreasing for $n \in [r^*+1, N)$, that is
\begin{align}
&    H(\textbf{-1})<H(\sigma_1)<H(\sigma_2)<\ldots <H(\sigma_{r^*-1})<H(\sigma_{r^*})  \text{ and } \notag\\
& H(\textbf{+1}) <  H(\sigma_{N-1})< \ldots H(\sigma_{r^*+1}) < H(\sigma_{r^*}). \notag
\end{align}
Thus, the height of the reference path $\Phi(\omega^\mathrm{r})$
is equal to 
$\Phi(\sigma_{r^*},\sigma_{r^*+1})$, since the energy cost between two configurations $\sigma_i$ and $\sigma_{i+1}$ is the same for each $i=1,...,N-1$ (see Section \ref{sec:recurrence} \textbf{Step 1} for details).  
Analogously to the proof of the recurrence property for regular clusters, we have 
    \begin{align}
        &\Phi(\sigma_{r^*},\sigma_{r^*+1})=H(\sigma_{r^*})+K^*.
    \end{align}
    Thus, 
    \begin{align}
        \Phi(\omega^\mathrm{r})-H(\textbf{-1})
        =&H(\sigma_{r^*})+K^*-H(\textbf{-1}) \notag \\
        =&c_{p,q} \left [(a_- \lambda_-^{r^*} + a_+ \lambda_+^{r^*})(q-2-h)-4 \sqrt{q-2} (\lambda_+^{r^*}-\lambda_-^{r^*}) \right ]+K^* \notag \\
        =&\Gamma^{p,q}
    \end{align}
    where last equality follows from the Definition \ref{def:gamma} of $\Gamma^{p,q}$.

\subsubsection*{Step 2: Lower bound}
Given $\sigma \in \mathcal{X}$, we recall \eqref{eq:number_of_pluses} for the number of plus spins in $\sigma$. 
For $n$ integer, $0 \leq n \leq |\Lambda|$, we introduce the following set
\begin{equation}\label{Vn}
    \nu_n:=\{\sigma \in \mathcal{X} | N^+(\sigma)=n\},
\end{equation}
namely $\nu_n$ is the set of configurations with a number of plus spins fixed at the value $n$. 

Any path $\omega$ from $\textbf{-1}$ to $\textbf{+1}$ crosses a configuration in each manifold for $n=0,...,|\Lambda|$. 
In particular, any path $\omega:\textbf{-1} \to \textbf{+1}$ crosses the manifold $\nu_{A^*}$ in any configuration, see Definitions \ref{critical_radius} and \ref{def:crit_conf} for the value of the critical area $A^*$.
By \cite{d2025minimal} we know that one of the configurations that contain a cluster of pluses with minimal perimeter is a configuration that contains a cluster of pluses $\mathscr{C} (r^*)$, see Definition \ref{def:crit_conf} In particular, $H(\mathcal{B}(r^*))=\Gamma^{p,q}$ and any other configurations belonging to $\nu_{A^*}$ has energy greater than $\Gamma^{p,q}$, since the number of pluses is the same and its perimeter is greater than that of $\mathscr{C} (r^*)$ by \cite{d2025minimal}. In particular, if a configuration in $\nu_{A^*}$ contains a non-regular cluster then its energy is strictly greater than $\Gamma^{p,q}$. 

Thus, $\Phi(\textbf{-1}, \textbf{+1})-H(\textbf{-1}) \geq \Gamma^{p,q}$. 

\subsection{Proof Theorems \ref{thm:transitiontime}, \ref{teotime'} and \ref{TMIX}}\label{sec:other_theorems}

\begin{proof}[Proof of Theorem~\ref{thm:transitiontime}] Thanks to Theorem \ref{Identification}, we can conclude by applying \cite[Theorem 4.1]{MNOS04} with $\eta_0=\{\textbf{-1}\}$ and $\Gamma=\Gamma^{p,q}$.
\end{proof}
\begin{proof}[Proof of Theorem~\ref{teotime'}]
By Proposition~\ref{teoRP}, the assumptions of \cite[Theorem 4.15]{MNOS04} are verified taking $\eta_0=\{\textbf{-1}\}$ and $T'_{\beta}=e^{\beta(V^*+\epsilon)}$. Then \eqref{Ptime'} and \eqref{Etime'} follow from \cite[Theorem 4.15]{MNOS04}. 
\end{proof}
\begin{proof}[Proof of Theorem~\ref{TMIX}]
Thanks to Theorem \ref{Identification}, we have that the maximal stability level is $\Gamma^{p,q}$, then we get the result by \cite[Proposition 3.24 and Example 3]{nardi2016hitting}.
\end{proof}

\section{Discussion on the other regions}\label{sec:other_regions}

In the following we will study the behavior of the Ising system for $h \notin (h^*_1, h^*_2)$. We will discuss the regions \textbf{Region I}: $h < h^*_1$ and \textbf{Region II}: $h \geq h^*_2$ separately. First we will prove a useful lemma. 

\begin{lemma}
Let $h>0$, then we have that 
\begin{align}
\min_{\sigma \in \mathcal{X}} \Delta H(\sigma)=
    \begin{cases}
        \Delta H(\textbf{-1}) &\qquad \text{ if } h\leq h^*_1,\\
        \Delta H(\textbf{+1}) &\qquad \text{ if } h\geq h^*_1.
    \end{cases}
\end{align}
\end{lemma}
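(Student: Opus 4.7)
The plan is to exploit linearity of $\Delta H$ in $h$ and reduce everything to an isoperimetric-type inequality at the critical field $h = h_1^*$. First, I would compute the two reference values directly from the Peierls representation \eqref{eq:peierls_hamiltonian}. Since $\textbf{-1}$ contains no plus spins, $\Delta H(\textbf{-1}) = 0$. For $\textbf{+1}$, the unique cluster is all of $\Lambda$, so the internal-edge contribution to the contour vanishes and the Peierls contour reduces to the outer boundary:
\[
|\gamma(\textbf{+1})| = (q-3)|I_{N;p,q}| + (q-2)|E_{N;p,q}| = (q-2)|L_N| - |I_{N;p,q}|.
\]
Substituting into \eqref{eq:peierls_hamiltonian} yields $\Delta H(\textbf{+1}) = (h_1^* - h)|\Lambda|$ by the very definition of $h_1^*$ in \eqref{eq:specifich}, so $\Delta H(\textbf{+1}) \leq \Delta H(\textbf{-1})$ exactly when $h \geq h_1^*$, with equality at $h = h_1^*$.

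Next, I would exploit the linear decomposition
\[
\Delta H(\sigma;h) = \bigl[\, |\gamma(\sigma)| - h_1^*\, N^+(\sigma) \,\bigr] + (h_1^* - h)\, N^+(\sigma)
\]
to reduce both cases of the lemma to a single pivot inequality: $|\gamma(\sigma)| \geq h_1^*\, N^+(\sigma)$ for every $\sigma \in \mathcal{X}$. Assuming this, if $h \leq h_1^*$ both summands above are non-negative, giving $\Delta H(\sigma;h) \geq 0 = \Delta H(\textbf{-1})$. If $h \geq h_1^*$, subtracting $\Delta H(\textbf{+1};h) = (h_1^* - h)|\Lambda|$ produces
\[
\Delta H(\sigma;h) - \Delta H(\textbf{+1};h) = \bigl[\, |\gamma(\sigma)| - h_1^*\, N^+(\sigma) \,\bigr] + (h - h_1^*)\bigl(|\Lambda| - N^+(\sigma)\bigr) \geq 0,
\]
since both terms are non-negative. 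In either case the minimum is attained at the announced configuration.

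The pivot inequality is where the work lies. Fixing $n = N^+(\sigma)$, I would invoke the isoperimetric result of \cite{d2025minimal}, which identifies the minimum-perimeter configurations in the manifold $\nu_n$ with standard polyamonds, namely a ball $B_{k;p,q}$ completed by a strip in layer $L_k$. Writing $\gamma^*(n)$ for this minimum, the task reduces to showing $\gamma^*(n) \geq h_1^*\, n$ for every $n \in \{1,\dots,|\Lambda|\}$, with equality only at $n = |\Lambda|$. I would verify this layer by layer, using the recursion \eqref{eq:recur} and the closed-form expressions for $|L_k|$ and $|I_{k;p,q}|$ in terms of $\lambda_\pm$ and $a_\pm$; the slope identities of the form $(q-2-h)|L_k| - 2|I_{k;p,q}|$ already computed in the reference-path analysis of Section \ref{bound} (specialized at $h=h_1^*$) supply the necessary monotonicity after a short algebraic rearrangement, and the boundary coefficients $(q-3)/2$ and $(q-2)/2$ in \eqref{def:Hamiltonian} are calibrated precisely so that the inequality saturates at $n = |\Lambda|$.

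The main obstacle is this final isoperimetric verification: because $\mathcal{L}_{p,q}$ is non-amenable, the ratio $|\gamma|/N^+$ does not decay to zero for large clusters, so one cannot invoke a crude surface-to-volume estimate and must instead track, layer by layer, how adjoining either an interior ball or an outer strip affects $|\gamma(\sigma)|$ and $N^+(\sigma)$ simultaneously. The delicate endpoint is the last layer $L_N$, where the boundary weights enter $|\gamma|$ and drive the ratio down to exactly $h_1^*$; everywhere else the inequality is strict. Once this is in hand, the case distinction in the lemma follows at once from the reduction above.
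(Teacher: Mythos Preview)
Your approach is genuinely different from the paper's. The paper does not attempt a uniform bound over $\mathcal{X}$; it simply computes $\Delta H$ for the four configurations $\textbf{-1}$, $\textbf{+1}$, $\textbf{-1}^{(+)}$, $\textbf{+1}^{(-)}$, introduces auxiliary thresholds $h_3^* := q-2-2|I_N|/|L_N|$ and $h_4^* := |I_N|/\sum_{j=0}^{N-1}|L_j|$, and shows via monotonicity-in-$N$ arguments that $h_3^* < h_1^* < h_4^*$, which orders these four candidates on either side of $h_1^*$. The global minimum over all of $\mathcal{X}$ is not argued explicitly there. Your linear-in-$h$ decomposition and the reduction to the pivot inequality $|\gamma(\sigma)| \geq h_1^*\, N^+(\sigma)$ are correct, and once this holds the lemma follows in two lines for both regimes; so your route, if completed, actually delivers the stated global claim rather than a comparison among four candidates. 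For balls the pivot inequality reads $h_4^*(n) \geq h_1^*(N)$ for $n\le N+1$, and this follows from the identity $h_4^*(n) = h_1^*(n-1)$ (an immediate consequence of the recursion \eqref{eq:recur}) combined with the monotonicity of $h_1^*$ in $N$ that the paper already records. What remains is the extension to standard polyamonds with a partial strip: along the strip the perimeter increment per added vertex alternates between $q-2$ and $q-4$, so the running ratio is not monotone and the verification is a genuine piecewise-linear check that you have correctly flagged as the main obstacle but not carried out. The strategy is sound; that step is residual work rather than a gap in the argument.
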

We note that if $h=h^*_1$ then we have two global minima $\textbf{-1}, \textbf{+1}$ and hence no metastability.
\begin{proof}
We compute the energy difference of the configurations $\textbf{-1}, \textbf{+1}, \textbf{-1}^{(+)}, \textbf{+1}^{(-)}$ with $\textbf{-1}$. It yields:
\begin{align}
&\Delta H(\textbf{-1}) = 0 \\
&\Delta H(\textbf{+1}) = (q-2)|L_N|-|I_{N}|- h\sum_{j=0}^N|L_j|  \\
&\Delta H(\textbf{-1}^{(+)})=(q-2-h)|L_N| \\
&\Delta H(\textbf{+1}^{(-)})=|I_N|-h\sum_{j=0}^{N-1}|L_j|.
\end{align}

It is easy to see that for each value of $h>0$, we have $\Delta H(\textbf{+1})<\Delta H(\textbf{-1}^{(+)})$.
For $h>h^*_1$, we compare $\Delta H(\textbf{+1}^{(-)})$ with $\Delta H(\textbf{+1})$.

We obtain that $\Delta H(\textbf{+1}) <\Delta H(\textbf{+1}^{(-)}) $ if and only if $ (q-2-h)|L_N|-2|I_{N}|<0$, i.e., -for $h>h^*_3$, where  
\[
h^*_3:=q-2-\frac{2|I_N|}{|L_N|}.
\]
Thus, it is enough proving that $h^*_1>h_3^*$.
\begin{align}
    h^*_1-h^*_3=\frac{(q-2)|L_N|-|I_{N}|}{\sum_{j=0}^N|L_j|}- \left ( q-2-\frac{2|I_N|}{|L_N|} \right )
\end{align}
This function is decreasing in $N \geq 1$ and we conclude with
\begin{align}
    \lim_{N \to \infty} [h_1^*(N)-h^*_3(N)]=0.
\end{align}

For $h<h^*_1$, it will be enough to compare $\Delta H(\textbf{+1}^{(-)})$ with $\Delta H(\textbf{-1})$.

We obtain that $\Delta H(\textbf{-1}) <\Delta H(\textbf{+1}^{(-)}) $ if and only if $h<h_4^*$, where
\[
h^*_4:=\frac{|I_N|}{\sum_{j=0}^{N-1}|L_j|}.
\]
It remains to show that $h_1^*<h_4^*$.
\begin{align}
    h^*_1-h^*_4=\frac{(q-2)|L_N|-|I_{N;p,q}|}{\sum_{j=0}^N|L_j|}- \frac{|I_{N;p,q}|}{\sum_{j=0}^{N-1}|L_j|}
\end{align}
This function is increasing in $N \geq 1$ and we conclude with
\begin{align}
    \lim_{N \to \infty} [h_1^*(N)-h^*_4(N)]=0.
\end{align}

A schematic representation of all values $h^*$ can be found in Figure \ref{fig:valueh}.
\begin{figure}[!htb]
    \centering
        \includegraphics[width=0.7\textwidth]{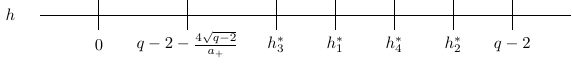} 
        \caption{A schematic representation of the values of $h$. $h_1^*, h^*_2$ are defined in Equation \eqref{eq:specifich}, $h^*_3,h^*_4$ are defined in above.}\label{fig:valueh}
\end{figure}

\end{proof}

\subsection{Region I: $h < h^*_1$ }\label{sec:region1}

In this parameter regime, a heuristic analysis suggests that the system exhibits two different behaviors. In particular, if $h \in \left (0 , q-2-\frac{4 \sqrt{q-2}}{a_+}\right )$, at a small energy cost, any configuration falls into the cycle of the stable state $\textbf{-1}$. Indeed, configurations containing non-regular clusters of pluses exhibit a low stability level, as in the previously discussed case for $h \in \left (h_1^*, h_2^* \right )$. As for configurations with regular clusters of pluses, one can observe that their energy decreases as the radius decreases. Therefore, starting from the homogeneous state $\textbf{+1}$ and flipping the minus spins from the boundary, the system evolves toward the stable state with only a small energetic cost. Indeed
\begin{align}\label{eq:energy_balls1}
    H(\sigma_{r})-H(\sigma_{r-1})&=|I_{r;p,q}|-|I_{r-1;p,q}|-h|L_{r-1}| \notag \\
    &=(q-2-h)|L_{r-1}|-2|I_{r-1;p,q}|
\end{align}
is positive if and only if
\begin{align}\label{eq:energy_balls2}
    c_{p,q}\lambda_-^{r} \left [ \left (a_+(q-2-h)-4 \sqrt{q-2} \right )\frac{\lambda_+^{r}}{\lambda_-^{r}}  +4 \sqrt{q-2} +a_-(q-2-h)\right ]>0.
\end{align}
that is true for $h<q-2-\frac{4 \sqrt{q-2}}{a_+}$. See Figure \ref{fig:regionI} for a schematic representation of the energy landscape.

\begin{figure}[!htb]
    \centering
        \includegraphics[width=0.4\textwidth]{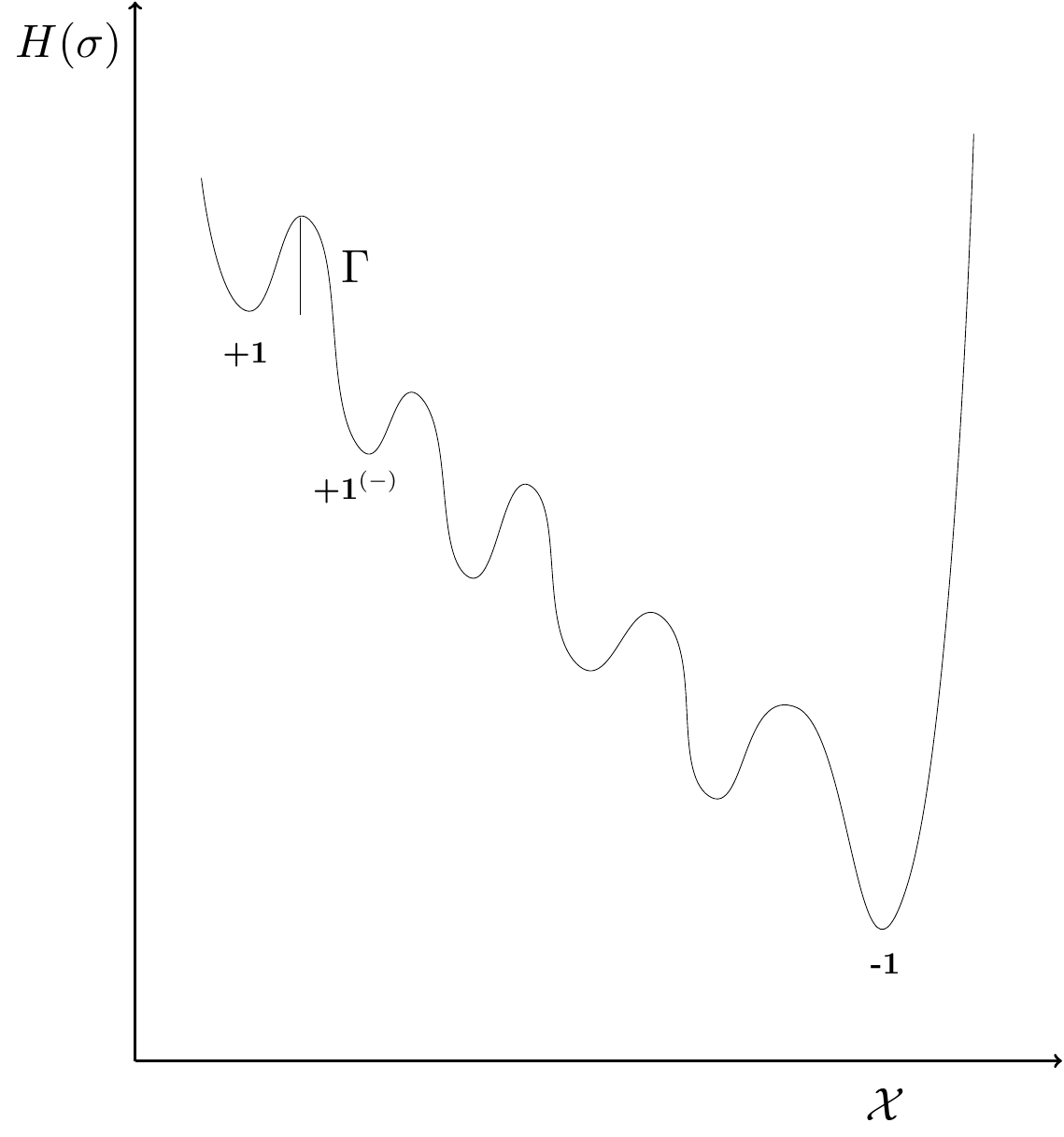} \caption{A schematic representation for the energy landscape depending on  $h \in \left (0,q-2-\frac{4 \sqrt{q-2}}{a_+} \right )$.}\label{fig:regionI}
\end{figure}

Otherwise, if we consider $h \in \left (q-2-\frac{4 \sqrt{q-2}}{a_+}, h_1^*\right )$, then the system initialized in $\textbf{+1}$ moves toward $\textbf{-1}$ through a heterogeneous nucleation, in which minus spins flip into pluses starting from the boundary and progressively shrinking the regular clusters of minuses.
We observe that in this case the energy of $\textbf{+1}^{(-)}$ is smaller (resp. greater) than the energy of $\textbf{+1}$ if $h>h^*_3$ see Figure \ref{fig:regionIbis} for a schematic representation (resp. $h<h^*_3$ and Figure \ref{fig:regionIa} for a schematic representation of the energy landscape). 

\begin{figure}[!hbtp]
    \centering
    \begin{minipage}{0.45\textwidth}
        \centering
        \includegraphics[width=1\textwidth]{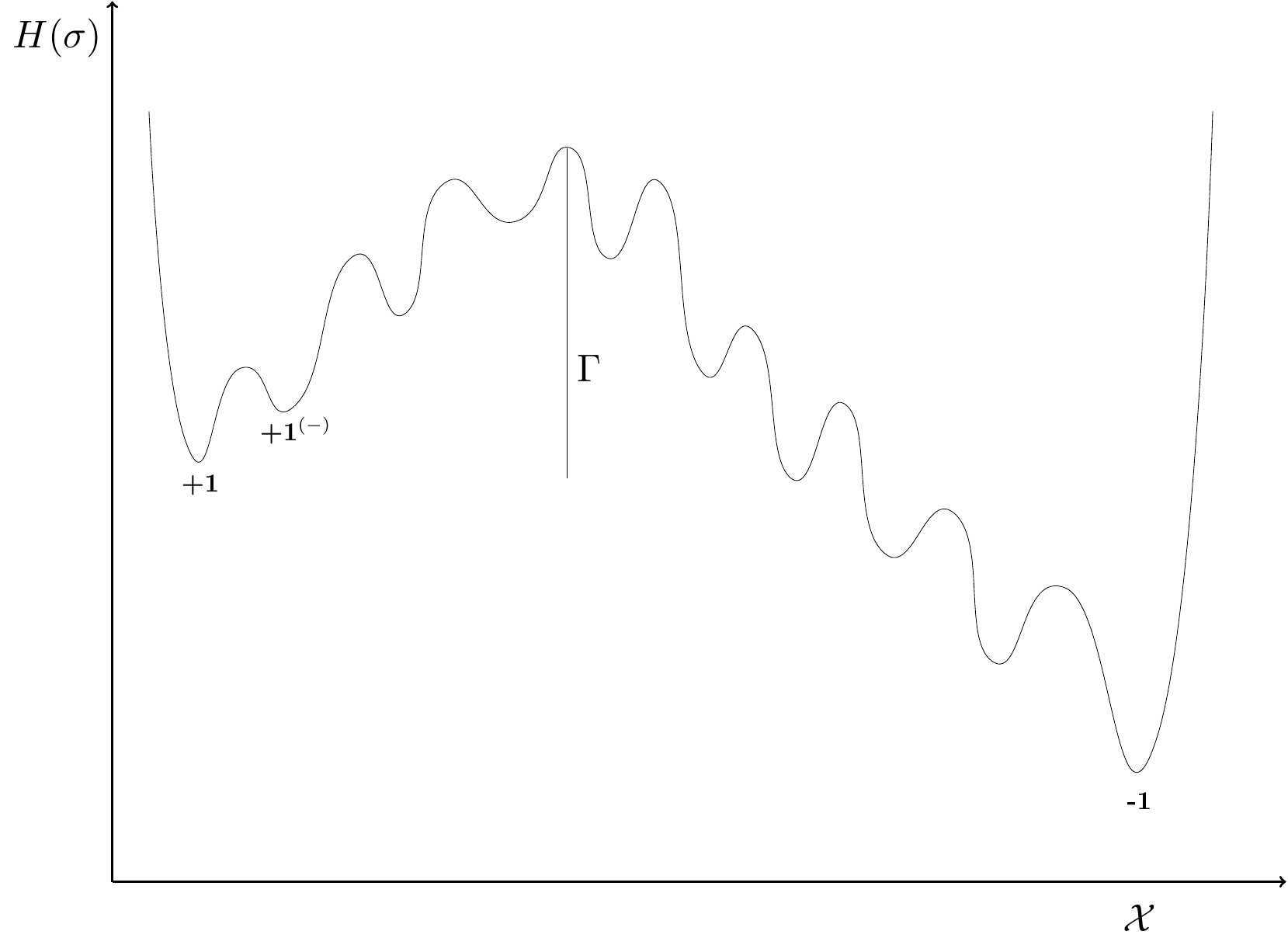}   
        \caption{Schematic representations of the energy landscape for $h \in (h^*_3, h^*_1)$ respectively.}\label{fig:regionIbis}
    \end{minipage}\hfill
    \begin{minipage}{0.45\textwidth}
        \centering    
        \includegraphics[width=1\textwidth]{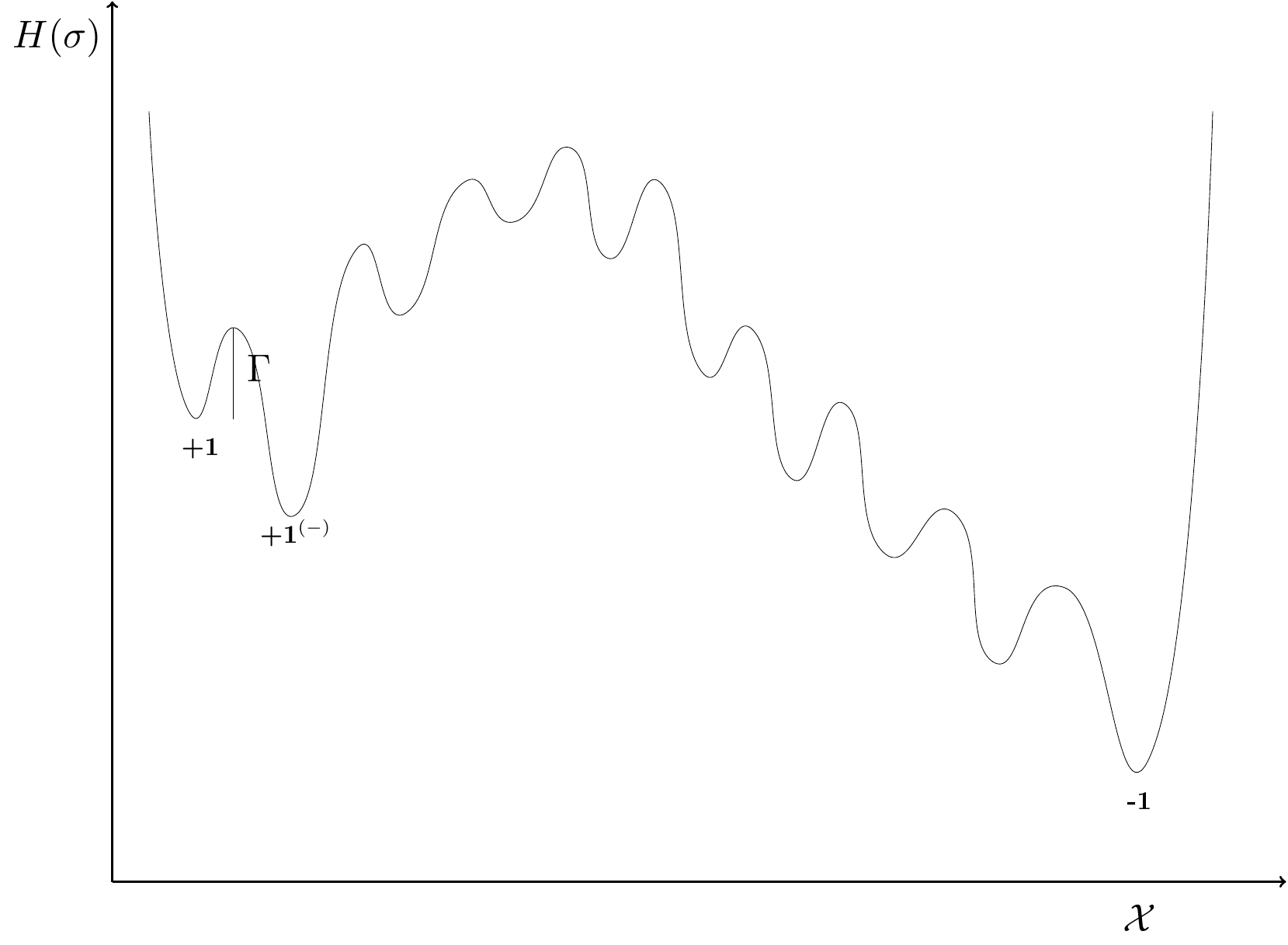} \caption{Schematic representations of the energy landscape for $h \in \left (q-2-\frac{4 \sqrt{q-2}}{a_+}, h^*_3 \right )$.}\label{fig:regionIa}     
    \end{minipage}
\end{figure}

\subsection{Region II: $h \geq h^*_2$}\label{sec:region2}

In this region, the system evolves from the homogeneous state $\textbf{-1}$ toward the stable state $\textbf{+1}$ with different behavior according to the value of $h$. 

In particular, for $h>q-2$, if a configuration contains at least a plus spin, than the system proceeds toward $\textbf{+1}$ decreasing its energy by flipping the minus spins closest to the cluster of pluses. The value of $q-h$ corresponds to the energy cost of the first swap of a minus into a plus in the state $\textbf{-1}$. 
For an example of a schematic representation of the energy landscape see Figure \ref{fig:regionII}.

Otherwise, if $h \in \left [ h^*_2, q-2 \right ]$, the system starting from $\textbf{-1}$ reaches $\textbf{+1}$ by creating consecutive regular cluster of pluses as in case $h \in (h^*_1, h^*_2)$. However in this case the energy of a regular cluster of pluses decreases as its radius, indeed
recalling \ref{eq:energy_balls1}, it is negative for all value of $h>h^*_2$. See Figure \ref{fig:regionIIa} for a schematic representation of the energy landscape.

\begin{figure}[!hbtp]
    \centering
    \begin{minipage}{0.45\textwidth}
        \centering
       \includegraphics[width=0.7\textwidth]{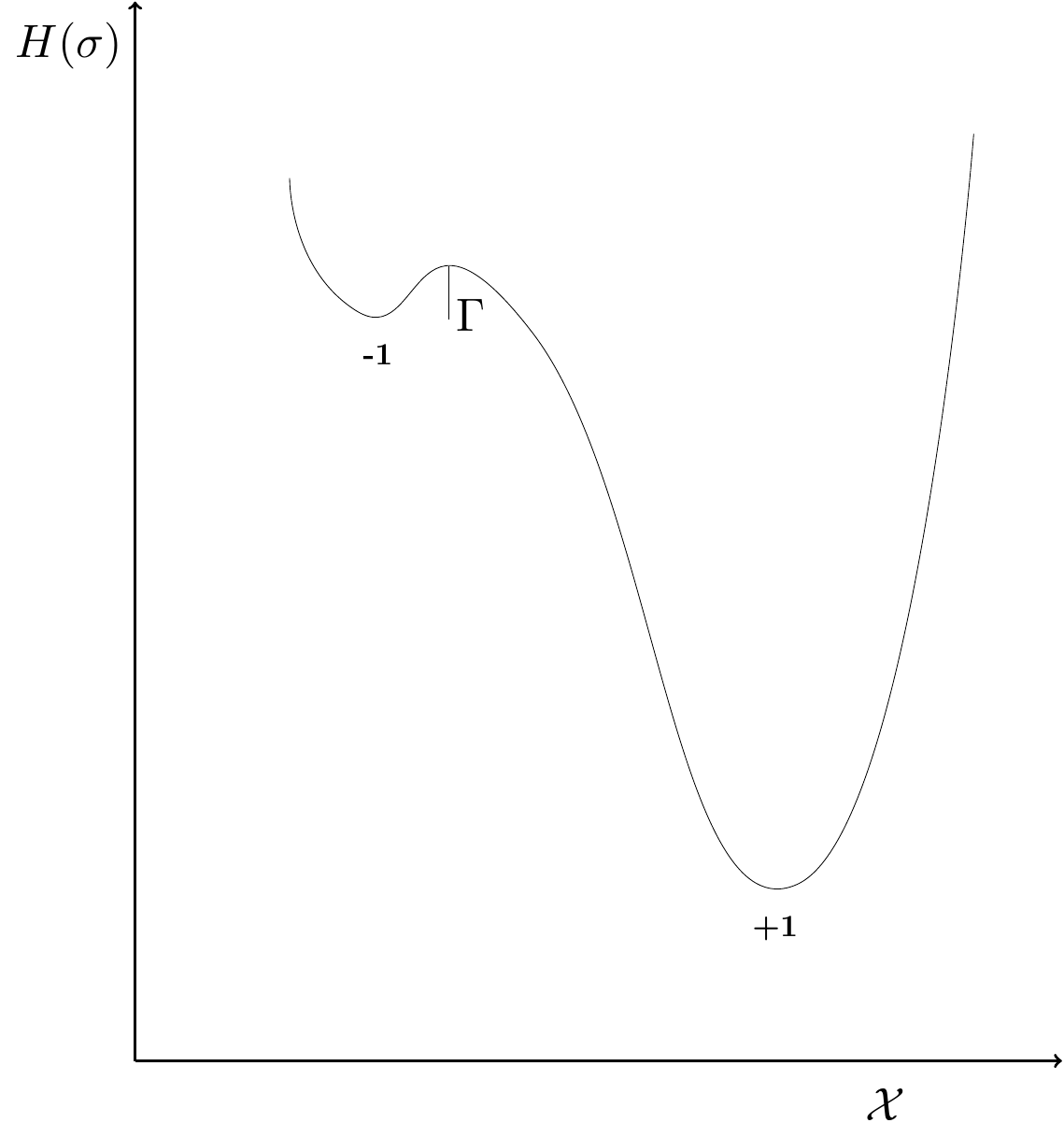}  
        \caption{A schematic representation of the energy landscape for $h>q-2$.}\label{fig:regionII}    
    \end{minipage}\hfill
    \begin{minipage}{0.45\textwidth}
        \centering    
            \includegraphics[width=0.8\textwidth]{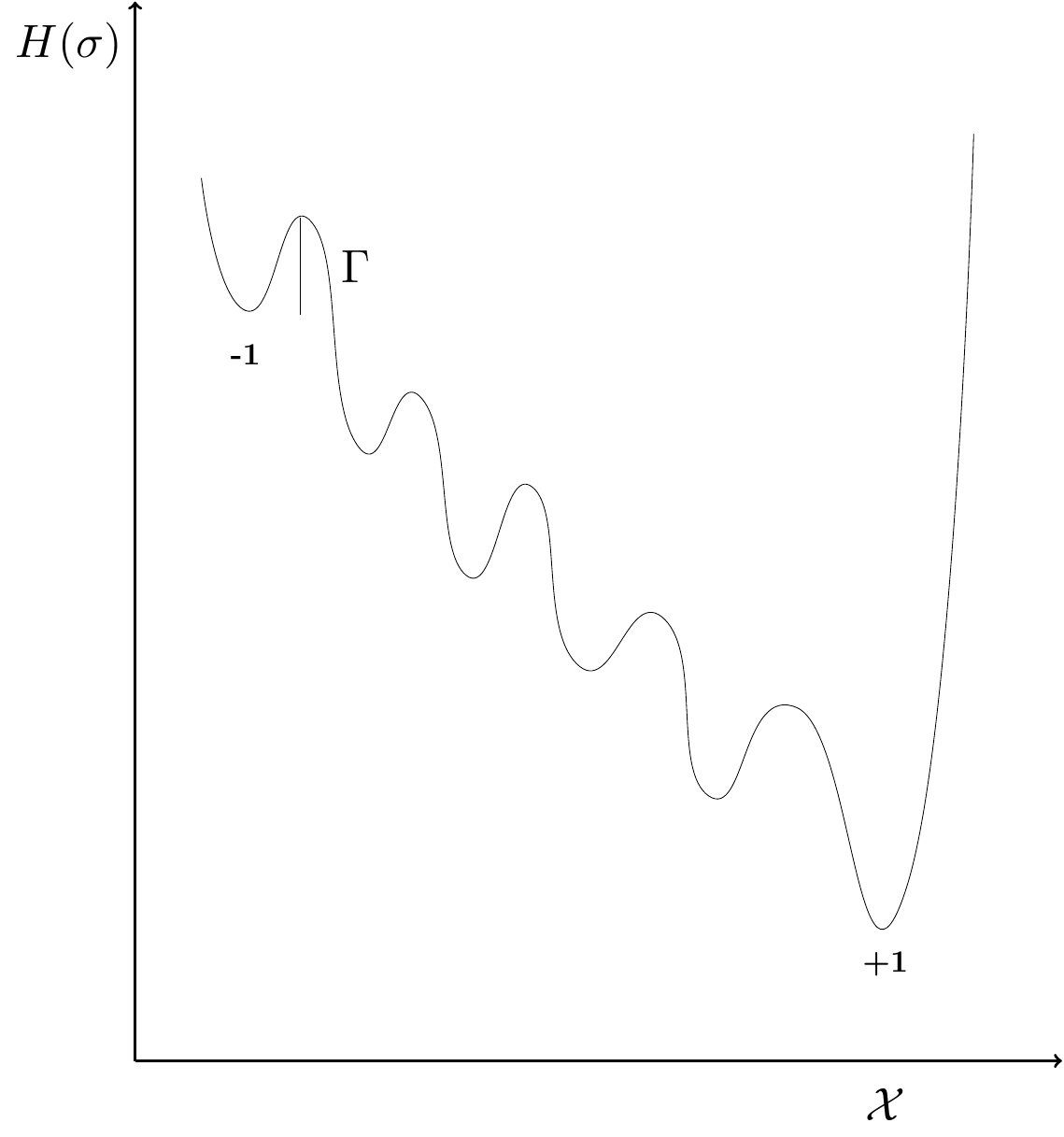} 
        \caption{A schematic representation of the energy landscape for $h \in \left [ h^*_2, q-2 \right ]$.}\label{fig:regionIIa}    
    \end{minipage}
\end{figure}

\section{Appendix}\label{sec:appendix}
In this Section we show two different examples of critical configurations for the same values of the parameters $p,q=5$ and different external magnetic fields. Note that in both cases the droplets are arranged in such a way that they minimize the perimeter, see Definition \ref{def:minset}.

Fixed $N$ large enough (for example $N>20$), then we can determine numerically the parameter range of $h$,
$(h_1^*,h_2^*)=(2.2361 , 2.25)$  with a width of at most $10^{-4}$.

\paragraph{Example 1.}
We choose $h=2.24$. In this case the critical radius is equal to $r^*=1$ with strip of length 13.

The critical configuration is a ball $B_{1;5,5}$ with a strip of length $k=13$. See the energy landscape in the first panel of Figure \ref{fig:examples} and Figure \ref{fig:example_zoom} for a zoom of the representation. 

In the first panel of Figure \ref{fig:examples}, the first (resp. the last) energy value corresponds to the energy of a configuration containing a cluster of pluses with shape $B_{1;5,5}$ (resp. $B_{2;5,5}$) in a sea of minuses. We observe that the first energy is greater than the second one, 
since $r^*=1$.

\paragraph{Example 2.}
We choose $h=2.2364$. In this case the critical radius is equal to $r^*=1$ with strip of length 55.

The critical configuration is a ball $B_{1;5,5}$ with a strip of length $k=55$. See the energy landscape in the second panel of Figure \ref{fig:examples}and Figure \ref{fig:example_zoom} for a zoom of the representation. 

In the second panel of Figure \ref{fig:examples}, the first (resp. the last) energy value corresponds to the energy of a configuration containing a cluster of pluses with shape $B_{1;5,5}$ (resp. $B_{2;5,5}$) in a sea of minuses. We observe that the first energy is greater than the second one, 
since $r^*=1$.

\begin{figure}[htb]
\centering
\includegraphics[scale=0.5]{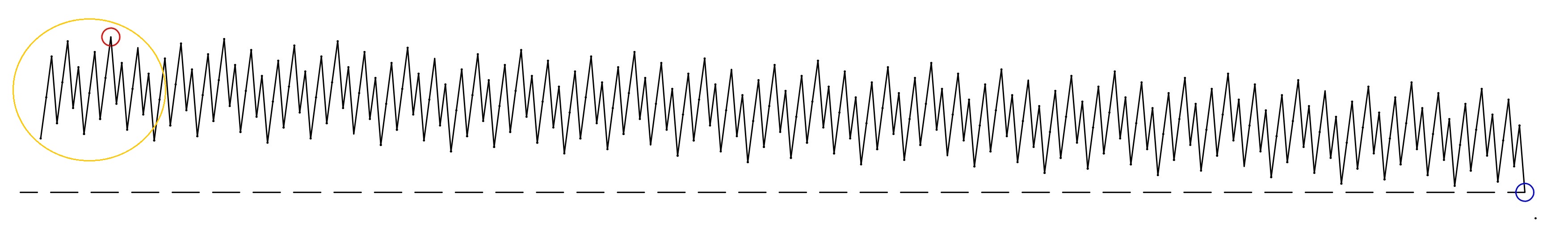} \,\,\,\,\, \includegraphics[scale=0.5]{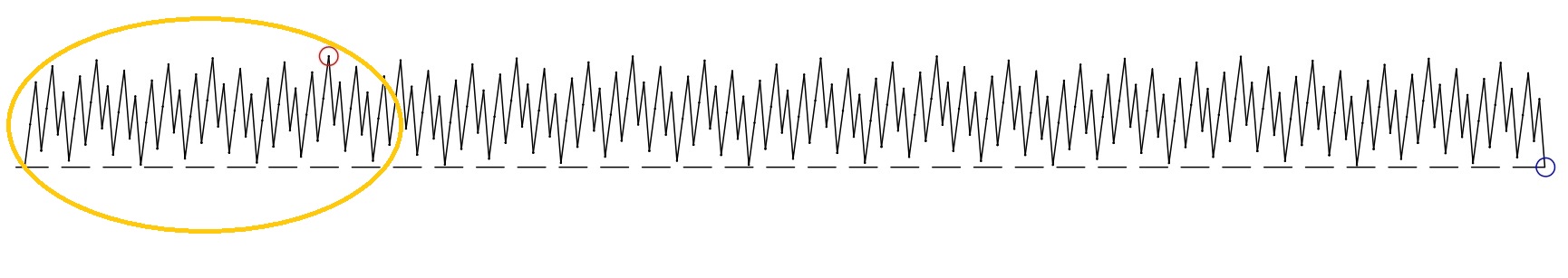}
\caption{The energy landscape along the reference path between the configurations containing $B_{1;5,5}$ and $B_{2;5,5}$, for the two examples described in the Appendix. In red (resp. blue) the maximal (minimal) energy along this path.} \label{fig:examples}
\end{figure}
\begin{figure}[htb]
\centering \includegraphics[scale=0.5]{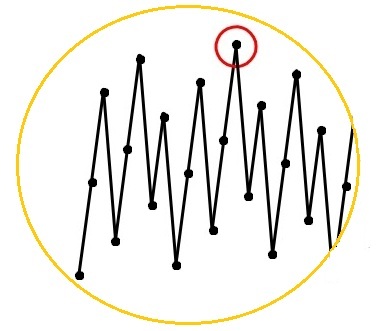} \,\,\,\,\,\,\,\,\,\, \includegraphics[scale=0.9]{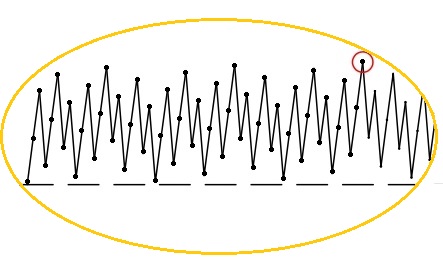}
\caption{On the left, the first part of the energy landscape of Example 1. Starting from the configuration containing a ball of pluses $B_{1;5,5}$, we flip the minuses adjacent to the ball in the order described in Section \ref{sec:recurrence}. Specifically, the energy increases by $q-2-h=3-h$ for $p-3=2$ times, then it decreases by $q-4-h=1-h$ and so on. After $5p-12=13$ steps, the energy is increased by $9(3-h)$ and it is decreased by $4(1-h)$. Thus, the energy of the critical configuration is obtained by summing the energy of the ball $B_{1;5,5}$ and is equal to  $31-13h$. \\
On the right, the first part of the energy landscape of Example 2. After $21p-50=55$ steps, the energy is increased by $35(3-h)$ and it is decreased by $20(1-h)$. Thus, the energy of the critical configuration is obtained by summing the energy of the ball $B_{1;5,5}$ and is equal to $125-55h$.} \label{fig:example_zoom}
\end{figure}

\section*{Funding}
V.J. and W.M.R. were supported by the NWO (Dutch Research Organization) grant  VI.Vidi.213.112.

\section*{Conflicts of interest}
The authors declare no conflicts of interest regarding this manuscript.

\section*{Data availability statement}
We do not analyse or generate any datasets, because our work proceeds within a theoretical and mathematical approach.

\section*{Acknowledgments} 
The authors wish to thank engineer E. Panichi for his valuable support in the use of CAD for the preparation of the figures included in the appendix.
V.J. thanks to GNAMPA. 

\bibliographystyle{plain}  
\bibliography{biblio}

\end{document}